\pgfplotsset{width=10cm,compat=1.9}
\newcommand{\R}{\mathbb{R}}
\newcommand{\Cl}{\mathsf{Cl}}
\newcommand{\supp}{\mathrm{supp}} 
\newcommand{\Prob}{\mathbb{P}}
\newcommand{\TrProb}{\mathsf{P}}
\newcommand{\E}{\mathbb{E}}
\newcommand{\1}{\mathds{1}} 
\newcommand{\Wass}{\mathbb{W}}
\newcommand{\Law}{\mathcal{L}}
\newcommand{\e}{\mathrm{e}}
\newcommand{\ds}{\displaystyle}
\newcommand{\cMc}{\mathcal{M}}
\newcommand{\cGc}{\mathcal{G}}
\newcommand{\cDc}{\mathcal{D}}
\newcommand{\cCc}{\mathcal{C}}
\newcommand{\cHc}{\mathcal{H}}
\newcommand{\poin}[1]{\dot{#1}}
\newtheorem{thm}{Theorem}[section]
\newtheorem{cor}[thm]{Corollary}
\newtheorem{lem}[thm]{Lemma}
\newtheorem{prop}[thm]{Proposition}
\newtheorem{defn}[thm]{Definition}
\newtheorem{rem}[thm]{Remark}
\numberwithin{equation}{section}
\newenvironment{custom_assumption}[1]
  {\innercustomthm}
  {\endinnercustomthm}
\title{Measure-dependent non-linear diffusions with superlinear drifts: asymptotic behaviour of the first exit-times}
\date{\today}
\author{A.~Aleksian$^{1,a}$ and J.~Tugaut$^{1,b}$\\[5pt]
\small{$^1$ Université Jean Monnet, CNRS UMR 5208,}\\
\small{Institut Camille Jordan, Maison de l'Université, 10 rue Tréfilerie,}\\
\small{CS 82301, 42023 Saint-Étienne Cedex 2, France}\\
\small{$^a$ ashot.aleksian@univ-st-etienne.fr, $^b$ julian.tugaut@univ-st-etienne.fr}
}
\begin{document}

%
%

\maketitle

\begin{abstract}
In this paper, we study McKean-Vlasov SDE living in $\R^d$ in the reversible case without assuming any type of convexity assumptions for confinement or interaction potentials. Kramers' type law for the exit-time from a domain of attraction is established. Namely, in the small-noise regime, the limit in probability of the first exit-time behaves exponentially. This result is established using the large deviations principle as well as improved coupling method.

Having removed the convexity assumption, this work is a major improvement of the previously known results for the exit-time problem, the review of which is provided in the paper. 
\end{abstract}

{\bf Key words:} Measure-dependent diffusions; Large deviations principle; Freidlin-Wentzell theory; Multi-well landscape \par

{\bf 2020 AMS subject classifications:} Primary: 60H10 ; Secondary: 60J60, 60K35 \par

\section{Introduction}\label{s:Introduction}


Let us consider $(X_t^\sigma, \, \, t\ge 0)$ a   measure-dependent stochastic process (also called McKean-Vlasov diffusion \cite{McKean1,McKean2}), solution of the following stochastic differential equation (SDE):
\begin{equation}
    \label{eq:main_SDE}
    \dd X_t^\sigma = \sigma \dd B_t - \nabla V(X_t^\sigma) \dd{t} - \nabla F\ast\mu_t^\sigma (X_t^\sigma) \dd{t},\quad X_0^\sigma = x_{\mathrm{init}} \in \R^d.
\end{equation}
Here $(B_t,\, t\ge 0)$ stands for the $d$-dimensional Brownian motion, $V$ represents the environment which is assumed to be a multi-well function (also called confinement potential in this work) and $F$ is the interaction potential corresponding to the form and strength of interaction of the process with its law. This specific form of the McKean-Vlasov diffusion is also known in the literature under the name of self-stabilizing diffusion or SSD (see \cite{HIP2}).

The aim of this study is to describe how long the stochastic process stays in a domain $\cDc$, which is a neighborhood of a local minimum of $V$, before its first exit from this neighborhood. Therefore, the main object of interest in this paper is the following stopping time:

\begin{equation}
    \label{eq:def:tau}
    \tau_\cDc^\sigma:=\inf\{t\ge 0:\ X_t^\sigma \notin \cDc  \}.
\end{equation}
The precise assumptions under consideration are given later on.












\subsection{Organization of the paper}\label{s:Organization}

The current section is followed by presenting and discussing assumptions on potentials $V$ and $F$ and domain $\cDc$, exit-time from which is considered. An existing result on existence and uniqueness of the process under almost identical assumptions are provided in Section~\ref{ss:Existence} with a discussion on how its proof can be adapted to our case. The large deviations principle for this system is provided in Section~\ref{ss:LDP}.

Main results of this paper are formulated in Section~\ref{s:Main_results}. Namely, the Kramers' type law for exit-time and the exit-location results for both cases of bounded and unbounded domain $\cDc$. This theorem is followed by Section~\ref{ss:comparison} comparing them to previously known results for exit-time problem in the case of self-stabilizing diffusion and Section~\ref{s:extensions} discussing open questions and possible extensions of our findings.

Section~\ref{s:interm_results} contains intermediate lemmas that are necessary for the proof of the main theorem of the paper. These lemmas are proved in Section~\ref{s:interm_res_proof}.  
Section~\ref{s:main_res_proof} contains the proof of the main theorem provided in Section~\ref{s:Main_results}.

\subsection{Assumptions}\label{s:Assumptions}

Here, we give the assumptions on the potentials and on the domain. 

\begin{custom_assumption}{A-1}\label{assu:pot:V}
    Let us consider the following hypotheses concerning the confinement potential:
    \begin{description}
        \item[$(V-1)$] The confinement potential is a regular function $V \in \mathcal{C}^2(\R^d)$.
        \item[$(V-2)$] $V$ is uniformly convex at infinity. Namely, there exists $\theta_1 > 0$ and $R > 0$ such that for all $x \in \R^d$ satisfying $|x| > R$ we have $$\nabla^2 V(x) \succeq \theta_1 {\rm Id},$$ where ${\rm Id}$ is the identity matrix.
        \item[$(V-3)$] There exist $r \in \mathbb{Z}_+$ and a constant $C>0$ such that 
        \[
        | \nabla V(x)|\le C(1+ | x|^{2r-1}), \quad\mbox{for all}\quad x\in\mathbb{R}^d.
        \]
        \item[$(V-4)$] There exists $a \in \mathbb{R}^d$ such that $\nabla V(a)=0$ and $\nabla^2V(a) \succeq \rho_1{\rm Id}$ for some $\rho_1>0$, where ${\rm Id}$ is the identity matrix.
        \item[$(V-5)$] The function $\nabla V$ is locally Lipschitz. More precisely, for any $x\in\R^d$ and $y\in\R^d$, we have:
        \begin{equation}\label{eq:rem:locallip}
            | \nabla V(x)-\nabla V(y)|\le C|x-y|(1+ | x|^{2r-1}+| y|^{2r-1})\,,
        \end{equation}
        where $r$ has been introduced in $(V-3)$.
    \end{description}
\end{custom_assumption}

Assumption $(V-1)$ is natural since we will use Itô calculus to obtain some of our results. Thus, we require that $V$ is of class $\mathcal{C}^2$. Assumption $(V-2)$ is taken to ensure that the confinement potential forces the diffusion to stay in a compact set and thus that the process does not explode. Assumptions $(V-3)$ and $(V-5)$ are required to comply with the theory developed in \cite{BRTV} for ensuring the existence of the self-stabilizing diffusion when the drift is superlinear. Assumption $(V-4)$ means that there is a local minimizer with a non-degenerate Hessian. We point out that $\nabla V$ is not assumed to be globally Lipschitz.



Assumption \ref{assu:pot:V} covers a wide range of possible multi-well potentials. An analytical example of such a potential $V$ that satisfies Assumption~\ref{assu:pot:V} in dimension $d = 1$ could be the classical double-well potential (see Fig.~\ref{fig:Example_V_d1})
\[
V(x):=\frac{x^4}{4} - \frac{x^2}{2}.
\]

In dimension two, the following function
\begin{align*}
    V(x_1, x_2)&:= \frac{3}{2}\left(1 - x_1^2 - x_2^2\right)^2+\frac{1}{3}\left(x_1^2 - 2\right)^2+\frac{1}{6}\left((x_1 + x_2)^2 - 1\right)^2\\
    &\quad +\frac{1}{6}\left((x_1 - x_2)^2-1\right)^2\,
\end{align*}
could be an example of a double-well potential satisfying these assumptions. Fig.~\ref{fig:Example_V_d2} shows its level sets.

We now give the assumptions on the interaction potential.


\begin{figure}[h]
\centering
\begin{minipage}[b]{.5\textwidth}
  \centering
    \includegraphics{./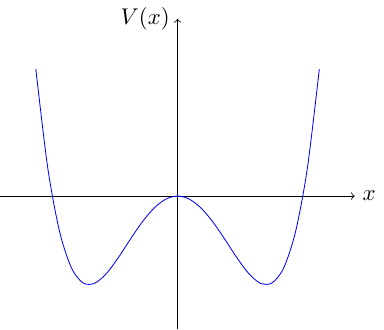}
  \caption{Example of $V$ in dimension $d = 1$.}
  \label{fig:Example_V_d1}
\end{minipage}%
\begin{minipage}[b]{.5\textwidth}
  \centering
    \includegraphics{./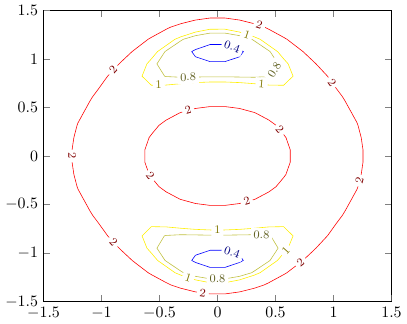}
  \caption{Example of $V$ in dimension $d = 2$.}
  \label{fig:Example_V_d2}
\end{minipage}
\end{figure}

\begin{custom_assumption}{A-2} \label{assu:pot:F}
Let $\theta_1$ and $r$ be the positive constants introduced in $(V-2)$ and $(V-3)$. Consider the following hypotheses concerning the interaction:
\begin{description}
    \item[$(F-1)$] The interaction potential is a regular function $F \in \cCc^2(\mathbb{R}^d)$.
    \item[$(F-2)$] $F(0) = 0$ and $\nabla F$ is rotationally invariant, that is there exists a continuous function $\phi : [0; \infty ) \to \R$ with $\phi(0) = 0$ such that
    \begin{equation*}
        \nabla F(x) = \frac{x}{|x|} \phi(|x|).
    \end{equation*}
    \item[$(F-3)$] There exists a constant $C'>0$ such that 
    \[
    | \nabla F(x)|\le C'(1+ | x|^{2r-1}), \quad\mbox{for all}\quad x\in\mathbb{R}^d\,.
    \]
    \item[$(F-4)$] The function $\nabla F$ is locally Lipschitz. More precisely, for any $x\in\R^d$ and $y\in\R^d$, we have:
    \begin{equation}
        \label{eq:rem:locallip2}
        | \nabla F(x)-\nabla F(y)|\le C'|x-y|(1+ | x|^{2r-1}+| y|^{2r-1})\,.
    \end{equation}
    \item[$(F-5)$] There exists a constant $\theta_2 > 0$ such that for any $x \in \R^d$ we have $$\nabla^2 F(x) \succeq -\theta_2 {\rm Id},$$ where ${\rm Id}$ is the identity matix. Moreover, $\theta_1 > \theta_2$.
\end{description}
\end{custom_assumption}

Again, Assumption $(F-1)$ is natural since we will use Itô calculus. Assumption $(F-2)$ is taken to ensure existence and uniqueness of the process following the work \cite{HIP2}, where a similar assumption was introduced. We point out that the exact value of $F(0)$ does not have any effect on our methods, however, taking it equal to $0$ simplifies the writing. Note that we do not use assumption $(F-2)$ for proving the exit-time result. Assumption $(F-3)$ is required for using the method developed in \cite{BRTV, HIP2} about the existence of the self-stabilizing diffusion when the drift is superlinear. We point out that $\nabla F$ is not assumed to be globally Lipschitz. Assumption $(F-5)$ is taken in order to guarantee that the attractive behaviour at infinity of $V$ will not be overcome by $F$, which is essential for existence and uniqueness results (we provide this result in Section~\ref{ss:Existence}).





Assumption \ref{assu:pot:F} covers a wide range of possible interaction potentials defining various behaviour with respect to the law of the process (attractive, repulsive or the combination of two). A classical analytical example of the interaction potential in general dimension $d$ is
\[
    F(x) := \pm\frac{\alpha}{2}|x|^2\,,
\]
with $\alpha > 0$. In the case of $F(x) = \frac{\alpha}{2}|x|^2$ (see Fig.~\ref{fig:Example_F_1} for its depiction in $d = 1$), the interacting potential is globally convex and induces attracting behaviour, whereas it is globally concave and thus repulsive with the negative sign. Another possible example of a potential is
\[
    F(x) := C\e^{-\frac{\theta}{2}|x|^2}\,,
\]
with $\theta > 0$ (for its graph in $d = 1$ see Fig.~\ref{fig:Example_F_2}). In this case, the function is neither convex nor concave, but, after a careful examination, we can see that it still exhibits repulsive behaviour, though dissipating at infinity. Note that here, despite assumption $(F-2)$, $F(0) \neq 0$. As was pointed out above, the translations of $F$ do not influence the dynamic of \eqref{eq:main_SDE}.


\begin{figure}[t]
\centering
\begin{minipage}[b]{.5\textwidth}
  \centering
    \includegraphics[]{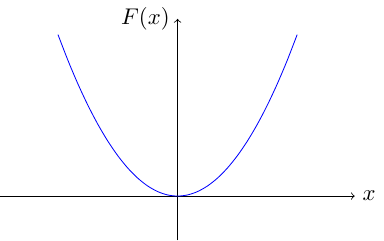}
  \captionof{figure}{Example of a convex $F$.}
  \label{fig:Example_F_1}
\end{minipage}%
\begin{minipage}[b]{.5\textwidth}
  \centering
    \includegraphics[]{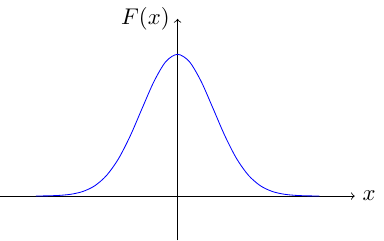}
  \captionof{figure}{Example of a non-convex $F$.}
  \label{fig:Example_F_2}
\end{minipage}
\end{figure}

In the following, we introduce the assumptions on the domain. First, we define the effective (in the small-noise limit) potential.

\begin{defn}
\label{def:effectivepotential}
Let $a$ be the local minimizer of $V$ introduced in \ref{assu:pot:V}. Then, $W_a \in \mathcal{C}^2(\R^d)$ such that $W_a := V + F \ast\delta_a = V + F(\cdot - a)$ is called the effective potential.
\end{defn}

The name ``effective'' comes from the fact that, as will be proved below, before the exit-time from the stable domain $\cDc$, for small $\sigma$, the potential $V + F \ast \mu^\sigma_t$, inducing the drift term of our process, is well approximated by $W_a$.

In order to ensure that, in the small-noise limit, our process behaves well around the attractor $a$, we need to assume that $a$ is also a stable local minimizer of the effective potential. Consider the following assumption

\begin{custom_assumption}{A-4}
\label{assu:pot:VetF2}
    The matrix $\nabla^2 W_a(a) = \nabla^2V(a)+\nabla^2F(0)$ is positive definite.
\end{custom_assumption}

Note that Assumption \ref{assu:pot:VetF2}, along with the continuity assumptions on $\nabla^2 V$ and $\nabla^2 F$ (Assumptions \ref{assu:pot:V} and \ref{assu:pot:F}), leads to the fact that we can find an open neighborhood of the point $a$ such that $W_a$ is convex inside it. Consider:
\begin{defn}\label{def:Loc_convexity}
    Let $\rho > 0$ be a small enough positive number such that $W_a$ is convex inside $B_\rho(a)$. Let $C_{W} > 0$ be a constant such that for any $x \in B_{\rho}(a)$:
    \begin{equation*}
        \nabla^2 W_a(x) = \nabla^2 V(x) + \nabla^2 F(x-a) \succeq C_{W}{\rm Id}\,,
    \end{equation*}
    where ${\rm Id}$ is the identity matrix.
\end{defn}

Let us now introduce assumptions regarding the domain of interest $\cDc \subset \R^d$, exit-time from which will be considered in the future. First assumption on domain $\cDc$ is the following:

\begin{custom_assumption}{A-5}
\label{assu:domain:bounded}
    $\cDc$ is a bounded connected open subset of $\mathbb{R}^d$ containing the point $a$.
\end{custom_assumption}

\begin{rem}
    Without loss of generality, we choose $\rho > 0$ from Definition~\ref{def:Loc_convexity} to be small enough such that we have the following  strict inclusion $B_\rho(a) \subset \cDc$.
\end{rem}

The boundedness of the domain $\cDc$ will be relaxed later. However, the fact that $\cDc$ is connected and open is mandatory and classical from \cite{DZ,FW}.

The following assumptions on $\cDc$ are mandatory:

\begin{custom_assumption}{A-6}
\label{assu:firsttraj}
    The domain $\cDc$ contains the deterministic path $(\gamma_t,\, t\ge 0)$ solution of the following dynamical system
    \begin{equation}
    \label{eq:dyn}
        \frac{\dd}{\dd{t}}\gamma_t =  -\nabla V(\gamma_t),\quad\quad \gamma_0 = x_\mathrm{init}.
    \end{equation}
    We assume furthermore that $\lim_{t\to\infty}\gamma_t = a$.
\end{custom_assumption}

This assumption is important for the type of exit-problem that we consider here, which is exit created by the small noise from a domain of attraction. We will see further, using the large deviations principle (LDP), that for any $T > 0$, the processes $(X_t^\sigma, 0\leq t\leq T)$ and $(\gamma_t,0\leq t\leq T)$ are close in supremum norm with high probability when $\sigma$ is small enough. In the case where $T_1 := \inf\{t \geq 0: \gamma_t \notin \cDc\} < \infty$, it is easy to show, using LDP, that $\tau_\cDc^\sigma \approx T_1$ for small $\sigma$. In other words, it is impossible to obtain the Kramers' type law without Assumption~\ref{assu:firsttraj}.

Now, we present the definition of a stable domain.

\begin{defn}
\label{def:stable}
We say that an open connected subset $\cGc$ of $\R^d$ is stable by the vector field $-\nabla W_a$ if for any $t\geq0$, for any $x\in\cGc$, $\psi_t(x)\in\cGc$ where the process $\psi(x)$ is the solution to the following dynamical system:
\[
    \psi_t(x) = x - \int_0^t \nabla W_a(\psi_s(x))\dd{s}.
\]

\end{defn}

This leads to a classical assumptions on the domain $\cDc$ that is standard for the Freidlin-Wentzell theory, see \cite{DZ,FW}.

\begin{custom_assumption}{A-7}
    \label{assu:stable}
    The open domain $\cDc$ is stable by the vector field $-\nabla W_a$. Moreover, for any $z\in\partial \cDc$, $\ds\lim_{t\to+\infty}\psi_t(z) = a$.
\end{custom_assumption}

\begin{rem}\label{rem:D^e_D^c}
    Note that by continuity argument we can expand domain $\cDc$ such that Assumptions \ref{assu:firsttraj} and \ref{assu:stable} still hold in the enlargement. Namely, for any $\kappa > 0$ small enough there exists an open connected bounded set $\cDc_\kappa^\mathsf{e} \subseteq \{x \in \R^d: \inf_{z \in \cDc}| z - x| < \kappa \}$ such that Assumptions \ref{assu:firsttraj} and \ref{assu:stable} are satisfied for $\cDc^\mathsf{e}_\kappa$. Obviously, the same holds for constrictions: for any $\kappa > 0$ small enough there exists an open set $\cDc^\mathsf{c}_\kappa \subseteq \{x \in \cDc: \inf_{z \in \partial \cDc} | z - x | > \kappa \}$ satisfying Assumptions \ref{assu:firsttraj} and \ref{assu:stable}.

    We can also define their exit-costs as $\ds H^\mathsf{e}_\kappa := \inf_{z \in \partial \cDc^\mathsf{e}_\kappa} \{W_a(z) - W_a(a)\}$ and $\ds H^\mathsf{c}_\kappa := \inf_{z \in \partial \cDc^\mathsf{c}_\kappa} \{W_a(z) - W_a(a)\}$ respectively.
\end{rem}

%
%
%
%
%
%
%

\subsection{Existence of the process}\label{ss:Existence}

The problem of existence and uniqueness of the SDE \eqref{eq:main_SDE} was studied in \cite{HIP2}. Mutatis mutandis from {\cite[Theorem 2.13]{HIP2}}, we get the following proposition:

\begin{prop}\label{prop:existence}
    Let $r$ be the positive constant introduced in $(V-3)$. For any $\sigma \geq 0$, under Assumptions~\ref{assu:pot:V} and \ref{assu:pot:F}, the SDE~\eqref{eq:main_SDE} has a unique strong solution that we denote by $(X_t^\sigma, t \geq 0)$. Moreover, there exists a constant $M > 0$, such that 
    \begin{equation}
    \label{bishop}
        \sup_{0 \leq \sigma \leq 1}\sup_{t \geq 0} \E \big[|X_t^\sigma|^{8r^2} \big]\leq M\,.
    \end{equation}
\end{prop}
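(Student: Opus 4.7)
The plan is to follow the strategy of \cite[Theorem 2.13]{HIP2}, verifying that under our Assumptions \ref{assu:pot:V} and \ref{assu:pot:F} the argument transfers with at most minor adaptations. Three ingredients are needed: an a priori moment bound, a Picard iteration in the space of flows of probability measures, and a Grönwall-based uniqueness step. The hypotheses have been arranged precisely so that the drift $-\nabla V -\nabla F * \mu_t$ is locally Lipschitz and of polynomial growth (by $(V-3)$, $(V-5)$, $(F-3)$, $(F-4)$), while the combination of $(V-2)$ with $(F-5)$ and the crucial sign condition $\theta_1 > \theta_2$ provides dissipativity of the total drift outside a large ball, uniformly over any candidate measure flow with controlled first moment.

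The heart of the proof is the uniform moment estimate \eqref{bishop}. I would apply Itô's formula to $|X_t^\sigma|^{2p}$ with $p = 4r^2$. Using $(V-2)$ one has $\langle x,\nabla V(x)\rangle \ge \theta_1|x|^2 - C$ for $|x|>R$; using $(F-5)$, after integration in $y$ against the law of $X_t^\sigma$, one obtains a lower bound on $\langle x, \nabla F * \mu_t^\sigma(x)\rangle$ of order $-\theta_2 |x|^2$ modulo terms depending only on low-order moments. Together with the growth controls $(V-3)$ and $(F-3)$ on the cross-terms, this yields a closed differential inequality
\begin{equation*}
\frac{d}{dt}\E\bigl[|X_t^\sigma|^{8r^2}\bigr] \le -c_1\, \E\bigl[|X_t^\sigma|^{8r^2}\bigr] + c_2,
\end{equation*}
with constants $c_1,c_2>0$ independent of $t\ge 0$ and of $\sigma\in[0,1]$ (the noise term contributes $\sigma^2 p(2p-1)\E[|X_t^\sigma|^{2p-2}]$, absorbed into $c_2$ via Young's inequality). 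Grönwall then yields the announced bound.

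With the moment estimate in hand as an a priori control, existence and uniqueness follow from the classical McKean–Vlasov fixed-point scheme: I would define $\Phi$ on a ball of measure flows with bounded $8r^2$-moments by sending $\mu$ to the law of the solution of the frozen SDE $dY_t = \sigma\, dB_t - \nabla V(Y_t)\,dt - \nabla F*\mu_t(Y_t)\,dt$, whose strong existence and non-explosion follow from local Lipschitz plus dissipativity. For two candidates $\mu,\nu$, the local Lipschitz estimates \eqref{eq:rem:locallip} and \eqref{eq:rem:locallip2}, the convolution inequality $|\nabla F*\mu_t(x) - \nabla F*\nu_t(x)|\le C(1+|x|^{2r-1})\Wass_1(\mu_t,\nu_t)$, and the moment bound together give, after Itô and Cauchy–Schwarz, a Grönwall control on $\E[|Y^\mu_t - Y^\nu_t|^2]$ making $\Phi$ a contraction on short time intervals; iteration covers $[0,\infty)$. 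The main obstacle, and the reason for the specific exponents $2r-1$, $2r$, and $8r^2$ appearing in the assumptions and in \eqref{bishop}, is the bookkeeping of polynomial weights: each cross-term in the Grönwall step carries a polynomial factor of $|X^\sigma|$, and the $8r^2$-moment is precisely the threshold that absorbs these factors after Cauchy–Schwarz. This balance is already worked out in \cite{HIP2,BRTV}, so no genuinely new ingredient is required.
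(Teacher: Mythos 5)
Your proposal matches the paper's own treatment of this proposition: the paper likewise does not reprove \cite[Theorem 2.13]{HIP2} but merely observes that the only genuine change of hypotheses is that $\phi$ may now be negative, and that Assumption $(F\text{-}5)$ with $\theta_1>\theta_2$ restores dissipativity of the full drift outside the ball of radius $R$ (the paper phrases it at the Hessian level, $\nabla^2 V+\nabla^2 F\ast\mu\succeq(\theta_1-\theta_2)\,\mathrm{Id}$), which is precisely the adaptation you identify. One small caution on your sketch of the moment bound: the single closed Gr\"onwall inequality $\frac{d}{dt}\E[|X_t^\sigma|^{8r^2}]\le -c_1\E[|X_t^\sigma|^{8r^2}]+c_2$ is optimistic as written, since the interaction cross-term $\langle y,\nabla F(x-y)\rangle$ weighted by $|x|^{8r^2-2}$ produces, via $(F\text{-}3)$, moments of order $8r^2+2r-3>8r^2$ that Young's inequality does not absorb directly; closing the estimate relies on the more careful exponent bookkeeping and symmetrization carried out in \cite{BRTV,HIP2}, to which you correctly defer, so this is a presentational rather than a substantive issue.
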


Note that the assumptions used in {\cite[Theorem 2.13]{HIP2}} are slightly different from ours, particularly for the interaction term. Assumption $(F - 2)$ of \ref{assu:pot:F} allows $\phi$ to be negative and thus to exhibit repulsive behaviour, while in \cite{HIP2} $\phi$ is set to be a positive increasing function. To neutralise possible problems that this relaxation could pose, we introduce assumption $(F-5)$. The fact that $\theta_1 > \theta_2$ guarantees that, regardless of $\mu^\sigma$, the drift term of our process is always attractive outside of a compact set. Namely, for any $\mu \in \mathcal{P}(\R^d)$ and for any $x \in \R^d$ such that $|x| > R$, we have $\nabla^2 V(x) + \nabla^2 F*\mu(x) \succeq (\theta_1 - \theta_2) {\rm Id}$ and thus 
\begin{equation*}
    \langle x; - \nabla V(x) - \nabla F*\mu(x) \rangle \leq - (\theta_1 - \theta_2)|x|^2.
\end{equation*}
This guarantees non-explosiveness of the process in finite time. After this observation, the proof in \cite{HIP2} can be easily adapted for the case of Assumptions~\ref{assu:pot:V} and \ref{assu:pot:F}.





\subsection{Large deviations principle}\label{ss:LDP}

The large deviations principle (LDP) for the process \eqref{eq:main_SDE} was also proved in \cite{HIP2}. Unlike in the case of Proposition~\ref{prop:existence}, the adaptation of these results for our assumptions on the interaction term is immediate. The authors proved the following result:

\begin{prop}[{\cite[Theorem 3.4]{HIP2}}]
\label{cor:ldp:init}
Let $\gamma$ be the unique solution of the ODE
\begin{equation*}
    \frac{\dd}{\dd{t}}{\gamma_t} = - \nabla V(\gamma_t), \quad \gamma_0 = x_\mathrm{init}.
\end{equation*}

Then for any $T > 0$, the probability measures induced by the processes $(X_t^\sigma , 0\leq t\leq T)_{\sigma > 0}$ on $\mathcal{C}([0, T])$ satisfy the LDP with convergence rate $\frac{\sigma^2}{2}$ with the following good rate function:
\begin{equation}
    \label{martika}
    I_T(\varphi):=\frac{1}{4}\int_0^T|\poin{\varphi}_t + \nabla V(\varphi_t) + \nabla F(\varphi_t - \gamma_t)|^2 \dd{t},
\end{equation}
for any $\varphi \in \cHc_1$, the set of absolutely continuous functions from $[0;T]$ to $\R^d$ such that $\varphi(0)=x_{\mathrm{init}}$. Otherwise, $I_T(\varphi):=+\infty$.
\end{prop}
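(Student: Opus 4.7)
The plan is to reduce the LDP for the measure-dependent process $X^\sigma$ to the classical Freidlin--Wentzell LDP for a measure-free auxiliary process through an exponential equivalence argument. Introduce the companion SDE
\begin{equation*}
    \dd Y_t^\sigma = \sigma \dd B_t - \nabla V(Y_t^\sigma) \dd t - \nabla F(Y_t^\sigma - \gamma_t) \dd t, \quad Y_0^\sigma = x_{\mathrm{init}},
\end{equation*}
driven by the same Brownian motion as $X^\sigma$, where $\gamma$ is the deterministic flow of Assumption~\ref{assu:firsttraj}. Its drift $b(t,x) := -\nabla V(x) - \nabla F(x-\gamma_t)$ is locally Lipschitz with polynomial growth and, thanks to $(V-2)$ combined with $(F-5)$ and $\theta_1 > \theta_2$, dissipative outside a compact set uniformly in $t \in [0,T]$. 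The Freidlin--Wentzell theorem for SDEs with superlinear drifts (carried out in \cite{BRTV,HIP2} via truncation plus exponential tightness) then yields the LDP on $\mathcal{C}([0,T];\R^d)$ with speed $\sigma^2/2$ and rate function exactly~\eqref{martika}.

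It then suffices to establish the exponential equivalence
\begin{equation*}
    \lim_{\sigma \to 0} \sigma^2 \log \Prob \Big( \sup_{0 \le t \le T} |X_t^\sigma - Y_t^\sigma| > \delta \Big) = -\infty, \quad \text{for every } \delta > 0.
\end{equation*}
Setting $D_t^\sigma := X_t^\sigma - Y_t^\sigma$, since the two processes share the Brownian increment the difference is pathwise absolutely continuous, and the local Lipschitz bounds $(V-5)$, $(F-4)$ yield
\begin{equation*}
    |D_t^\sigma|^2 \le C \int_0^t |D_s^\sigma|^2 \bigl(1+|X_s^\sigma|^{4r-2}+|Y_s^\sigma|^{4r-2}\bigr) \dd s + C \int_0^t \bigl|\nabla F \ast \mu_s^\sigma(Y_s^\sigma) - \nabla F(Y_s^\sigma - \gamma_s)\bigr|^2 \dd s.
\end{equation*}
The second integrand is controlled by a Wasserstein-type quantity $\Wass_1(\mu_s^\sigma,\delta_{\gamma_s})$ weighted by a polynomial in $|Y_s^\sigma|$; using $(F-4)$, the moment estimate~\eqref{bishop}, and Gronwall, this in turn is bounded by $\sup_{s \le T} \E|X_s^\sigma - \gamma_s|^2$, which vanishes polynomially in $\sigma$ upon taking expectation in~\eqref{eq:main_SDE} and applying Gronwall a second time.

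The main obstacle is upgrading this $L^2$ estimate into the required \emph{superexponential} bound on $\Prob(\sup_t |D_t^\sigma|>\delta)$. The standard route is to localize through the stopping time $\tau_R^\sigma := \inf\{t:\, |X_t^\sigma| \vee |Y_t^\sigma| > R\}$: on $\{\tau_R^\sigma \ge T\}$ the drifts are globally Lipschitz, so a pathwise Gronwall combined with Bernstein's exponential inequality for the stochastic integral yields the desired superexponential decay, while $\Prob(\tau_R^\sigma < T)$ itself decays superexponentially in $\sigma$ for $R$ large enough, owing to the convexity-at-infinity in $(V-2)$ and the dissipativity $\theta_1>\theta_2$ obtained from $(F-5)$. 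This is precisely the scheme of \cite[Theorem~3.4]{HIP2}; the only point warranting verification is that relaxing $(F-2)$ to allow a signed $\phi$ (rather than the positive increasing one of \cite{HIP2}) leaves the argument intact, which is indeed the case since every step only invokes $(F-4)$ and the global dissipativity guaranteed by $\theta_1>\theta_2$.
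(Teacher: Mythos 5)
The paper does not prove this proposition: it is stated verbatim as {\cite[Theorem~3.4]{HIP2}}, with the only accompanying remark being that adapting that theorem to the present, slightly relaxed assumption $(F\text{-}2)$ on the interaction is immediate. So there is no ``paper's own proof'' to compare against; what you have produced is a reconstruction of the argument in \cite{HIP2}, and as such it is on the right track. The overall scheme --- introduce the auxiliary SDE with $\nabla F(\cdot-\gamma_t)$ in place of $\nabla F\ast\mu_t^\sigma$, obtain the Freidlin--Wentzell LDP for that time-inhomogeneous, locally Lipschitz diffusion via truncation plus exponential tightness, then close the argument by exponential equivalence --- is exactly the intended route.

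Two points are worth tightening. First, the difference process $D_t^\sigma = X_t^\sigma - Y_t^\sigma$ is driven by the \emph{same} Brownian motion, so its dynamics contain no stochastic integral at all; the pathwise Gronwall on $\{\tau_R^\sigma \ge T\}$ yields a \emph{deterministic} bound on $\sup_{t\le T}|D_t^\sigma|$ once $\sigma$ is small (the forcing term is controlled by the nonrandom quantity $\sup_{s\le T}\E|X_s^\sigma-\gamma_s|^2 = O(\sigma^2)$), and Bernstein's inequality is not needed for that step. It is only in estimating $\Prob(\tau_R^\sigma < T)$ that exponential martingale/Bernstein bounds enter. Second, the phrase ``$\Prob(\tau_R^\sigma < T)$ decays superexponentially for $R$ large enough'' should be read uniformly: for a fixed $R$, this probability decays only exponentially with a rate depending on $R$; the correct statement is that for every $M>0$ one can pick $R=R(M)$ so that $\limsup_\sigma \sigma^2 \log \Prob(\tau_R^\sigma<T) \le -M$, and one then chooses $\sigma$ small after fixing $R(M)$. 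With these two clarifications your sketch is a faithful account of the \cite{HIP2} argument, and the observation that only $(F\text{-}4)$ and the dissipativity from $\theta_1>\theta_2$ are used --- not the sign or monotonicity of $\phi$ --- is precisely what the paper means by saying the adaptation is immediate.
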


If we denote by $(\nu^\sigma)_{\sigma > 0}$ the family of probability measures induced on $\mathcal{C}([0, T])$ by $(X_t^\sigma, 0\leq t\leq T)$ for respective $\sigma > 0$, then the proposition above takes the following form. For any measurable subset $\Gamma \subset \mathcal{C}([0, T])$, we have:
\begin{equation*}
        -\inf_{f \in \accentset{\circ}{\Gamma}} I(f) \leq \liminf_{\sigma \xrightarrow{} 0} \frac{\sigma^2}{2} \log \nu^\sigma(\Gamma) \leq \limsup_{\sigma \xrightarrow{} 0} \frac{\sigma^2}{2} \log \nu^\sigma(\Gamma) \leq -\inf_{f \in \overline{\Gamma}} I(f).
\end{equation*}

Note that most authors use the convergence rate $\sigma^2$ and, consequently, the term in front of the integral in~\eqref{martika} is $\frac{1}{2}$ instead of $\frac{1}{4}$. However, we choose to take as convergence rate the coefficient in front of the Laplacian in the associated partial differential equation.

Note also that in this proposition $\gamma$ represents the deterministic limit of the system~\eqref{eq:main_SDE}. When $\sigma$ is small, we expect our process to stay close to $\gamma$ for fixed time intervals. Thus it does not come as a surprise that it is $\delta_\gamma$ that replaces $\mu^\sigma$ in the rate function.

\section{Main results}\label{s:Main_results}

In this paragraph, we list the main results of the paper.

\subsection{Exit-time}\label{ss:exit-time}

We now give the main results concerning the exit-time, for the case when $\cDc$ is a bounded domain.

\begin{thm}\label{thm:main_exit_time}
    Let $H$ be the exit-cost introduced in Assumption~\ref{assu:stable}. Under Assumptions~\ref{assu:pot:V}--\ref{assu:stable}, the following two results hold
    \begin{enumerate}
        \item \textit{Kramers' law}: for any $\delta>0$, the following limit holds:
        \begin{equation}\label{eq:thm}
            \lim_{\sigma\to 0}\Prob\left[ \exp\Big\{\frac{2}{\sigma^2}(H-\delta)\Big\}\le \tau^\sigma_{\cDc} \le  \exp\Big\{\frac{2}{\sigma^2}(H+\delta)\Big\} \right]=1\,        
        \end{equation}
        \item \textit{Exit-location}: for any closed set $N \subset \partial \cDc$ such that $\inf_{z \in N} W_a(z) > H$ the following limit holds:
        \begin{equation}\label{eq:thm_exit_location}
            \lim_{\sigma \to 0} \Prob \big(X_{\tau_{\cDc}^\sigma}^\sigma \in N \big) = 0.
        \end{equation}
    \end{enumerate}
\end{thm}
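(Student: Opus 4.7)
My overall strategy would be to reduce the measure-dependent exit problem to a classical Freidlin--Wentzell exit problem driven by the deterministic effective potential $W_a$, and then apply the standard large-deviations machinery. The heuristic justification is that, as long as $X_t^\sigma$ has not left $\cDc$, the law $\mu_t^\sigma$ concentrates on a small neighborhood of the attractor $a$, so $\nabla F \ast \mu_t^\sigma \approx \nabla F(\cdot - a)$ and the drift of \eqref{eq:main_SDE} is well approximated by $-\nabla W_a$. Once this concentration is quantified uniformly in $\sigma$ on exponentially long time windows, both the Kramers' law \eqref{eq:thm} and the exit-location \eqref{eq:thm_exit_location} follow by comparing to the classical Freidlin--Wentzell result for the SDE with drift $-\nabla W_a$ on the stable domain $\cDc$.

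The central technical input, which I expect Section~\ref{s:interm_results} to supply, is a quantitative statement of the following form: for every $\eta > 0$, there exists $\sigma_0$ such that for $\sigma \leq \sigma_0$, with probability tending to $1$, the Wasserstein distance $\Wass_2(\mu_t^\sigma, \delta_a)$ stays below $\eta$ for all $t \in [T_0, \tau_\cDc^\sigma \wedge \exp\{2(H+\delta)/\sigma^2\}]$, where $T_0$ is a deterministic time after which $\gamma_t$ is in $B_\rho(a)$. This is where the \emph{improved coupling method} enters: one couples $X_t^\sigma$ to an auxiliary process whose drift is frozen as $-\nabla W_a$, exploiting the local convexity $\nabla^2 W_a \succeq C_W \mathrm{Id}$ on $B_\rho(a)$ from Definition~\ref{def:Loc_convexity} and the convexity-at-infinity coming from $\theta_1 > \theta_2$ to contract the coupling up to an $O(\sigma)$ error and periodic excursions. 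This step is what I expect to be the main obstacle, because exit events, though rare, do influence the law, so one must carefully exclude trajectories that have already left $\cDc$, for instance by conditioning or by a renewal argument based on returns to $B_\rho(a)$.

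Once this concentration is established, the upper bound in \eqref{eq:thm} is obtained by cutting time into intervals of length $T$ (fixed and large), and on each interval using the LDP (Proposition~\ref{cor:ldp:init}) applied to the drift $-\nabla W_a$ approximation on the enlargement $\cDc_\kappa^{\mathsf{e}}$ from Remark~\ref{rem:D^e_D^c}: there is a control path of cost arbitrarily close to $H_\kappa^{\mathsf{e}}$ that pushes the process out of $\cDc$, so each interval has exit probability at least $\exp\{-2(H+\delta/2)/\sigma^2\}$. A standard Borel--Cantelli / Markov-type argument, using the coupling to restart after returns to $B_\rho(a)$, then yields $\tau_\cDc^\sigma \leq \exp\{2(H+\delta)/\sigma^2\}$ with high probability. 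The lower bound is symmetric: on the constriction $\cDc_\kappa^{\mathsf{c}}$, any exit trajectory costs at least $H_\kappa^{\mathsf{c}}$ by Assumption~\ref{assu:stable}, and the LDP, combined with the drift approximation and strong Markov restarts from $B_\rho(a)$, gives a uniform upper bound $\exp\{-2(H-\delta/2)/\sigma^2\}$ on exit probability per interval, from which $\tau_\cDc^\sigma \geq \exp\{2(H-\delta)/\sigma^2\}$ follows.

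For the exit-location statement \eqref{eq:thm_exit_location}, the argument is similar in spirit: if $N \subset \partial \cDc$ satisfies $\inf_{z \in N} W_a(z) = H + 2\eta > H$, then by the LDP applied to the $W_a$-approximation, the probability of exiting through a neighborhood of $N$ within a time interval $[0, \exp\{2(H+\eta/2)/\sigma^2\}]$ is of order at most $\exp\{-2\eta/\sigma^2\}$ times the number of intervals needed, which still tends to zero. Combined with the Kramers' upper bound ensuring that the process does exit within such a window with high probability, one concludes $\Prob(X_{\tau_\cDc^\sigma}^\sigma \in N) \to 0$. Throughout, the delicate point is that every LDP-based estimate has to be applied on the event where the coupling has kept $\mu_t^\sigma$ close to $\delta_a$, so the full proof will alternate between probabilistic estimates on the coupling and deterministic Freidlin--Wentzell estimates on the effective system.
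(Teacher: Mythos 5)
Your overall reduction---concentrate the law near $\delta_a$ on exponentially long windows, couple $X^\sigma$ to a diffusion driven by the frozen drift $-\nabla W_a$, transfer the exit problem to a classical Freidlin--Wentzell one on shrinkages and enlargements of $\cDc$---is the same philosophy the paper follows, and you correctly identify the excursion structure around $B_\rho(a)$ and the role of Lemma~\ref{lem:S_k_control}--type control of the destabilization time. Where you diverge, and where there is a genuine gap, is in the last step. You propose to re-derive the Kramers estimate for $X^\sigma$ itself by cutting $[0,\exp\{2(H\pm\delta)/\sigma^2\}]$ into fixed-length intervals, applying Proposition~\ref{cor:ldp:init} on each interval, and closing with a Markov/Borel--Cantelli argument. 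This does not go through as stated: Proposition~\ref{cor:ldp:init} is an LDP for the law of $(X_t^\sigma)_{t\le T}$ started at the fixed deterministic point $x_{\mathrm{init}}$, with a rate function built from the deterministic flow $\gamma$ of $-\nabla V$ started at $x_{\mathrm{init}}$. Because the nonlinear drift depends on $\mu_t^\sigma$ rather than only on the current state, $X^\sigma$ is not a Markov process in the usual sense, and this LDP cannot simply be re-applied from a random intermediate time at a random position; one would need a fresh LDP with a different centring curve, which is not available. The paper avoids exactly this obstruction: rather than running an interval-by-interval argument on $X^\sigma$, it introduces the genuine It\^o diffusion $Y^\sigma$ of \eqref{eq:def:Y}, shows that $\sup_t|X_t^\sigma-Y_t^\sigma|\le\kappa$ with high probability on the relevant exponential window (Corollary~\ref{cor:|X_t-Y_t|}), and then observes the purely pathwise inclusions $\{\tau_\cDc^\sigma<T,\ \sup|X-Y|\le\kappa\}\subset\{\tau^{Y,\sigma}_{\cDc^{\mathsf c}_\kappa}<T\}$ and $\{\tau_\cDc^\sigma>T,\ \sup|X-Y|\le\kappa\}\subset\{\tau^{Y,\sigma}_{\cDc^{\mathsf e}_\kappa}>T\}$. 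This lets one invoke the classical Freidlin--Wentzell exit theorem (Proposition~\ref{prop:DZ_exit_time}) for $Y^\sigma$ as a black box, uniformly over initial points in $B_\kappa(a)$, with no need to reconstruct the FW proof.

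A secondary difference concerns the exit-location part. You estimate the probability of exiting through $N$ per interval and multiply by the number of intervals, which again smuggles in the restartable LDP. The paper instead reduces exit-location to two applications of the already-proved Kramers law: it introduces the sublevel set $L^-_{H+\xi}=\{W_a-W_a(a)\le H+\xi\}$, which itself satisfies Assumptions~\ref{assu:domain:bounded}--\ref{assu:stable}, notes that exiting $\cDc$ through $N$ forces $\tau^\sigma_{L^-_{H+\xi}}\le\tau^\sigma_\cDc$, and then bounds $\Prob(\tau^\sigma_{L^-_{H+\xi}}\le\tau^\sigma_\cDc)$ by combining the Kramers upper bound for $\cDc$ with the Kramers lower bound for $L^-_{H+\xi}$. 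That argument stays entirely within the transferred FW framework and does not need any per-interval control. You should replace your interval-counting steps with the transfer-to-$Y^\sigma$ mechanism; the rest of your outline is consistent with the paper.
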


Proof of Theorem~\ref{thm:main_exit_time} is provided in Section~\ref{s:main_res_proof}.

\subsection{Control of the law}\label{ss:Control_of_law}

We now present a result on the control of the law in the case where $\cDc$ is bounded. The following theorem rigorously states that, starting from some uniformly bounded in $\sigma$ time, the law of the process $\mu^\sigma$ stays close to $\delta_a$ long enough to obtain the result of Theorem \ref{thm:main_exit_time}.
\begin{thm}\label{thm:control_of_law}
    Under Assumptions~\ref{assu:pot:V}--\ref{assu:stable}, for any $\kappa > 0$ small enough there exist $\overline{T}_{\!\mathsf{st}}(\kappa) > 0$ and $\sigma_\kappa > 0$ such that
    \begin{equation*}
        \sup_{0 < \sigma < \sigma_\kappa} \sup_{t \in \Big[\overline{T}_{\!\mathsf{st}}(\kappa); \e^{\frac{2H}{\sigma^2}} \Big]} \Wass_2(\mu_t^\sigma; \delta_a) \leq \kappa.
    \end{equation*}
\end{thm}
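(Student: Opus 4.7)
The plan is to split the horizon $\bigl[0, \e^{2H/\sigma^2}\bigr]$ into two stages. On an initial stabilization window $[0, \overline{T}_{\!\mathsf{st}}(\kappa)]$ whose length depends only on $\kappa$, the large deviations principle pushes $X_t^\sigma$ close to the deterministic path $\gamma_t$, hence close to $a$, with overwhelming probability. On the subsequent maintenance window, an Itô-based argument combining the local convexity of $W_a$ around $a$ with the Wasserstein smallness of $\mu_t^\sigma-\delta_a$ traps $\Wass_2(\mu_t^\sigma, \delta_a)$ below $\kappa$ self-consistently. The main difficulty is precisely this self-consistency: the drift of~\eqref{eq:main_SDE} depends on $\mu_t^\sigma$, so the contraction estimate already presupposes the very bound it is meant to propagate.

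For the stabilization phase, Assumption~\ref{assu:firsttraj} provides $\overline{T}_{\!\mathsf{st}}(\kappa)$ with $|\gamma_{\overline{T}_{\!\mathsf{st}}(\kappa)} - a| < \kappa/4$, and Proposition~\ref{cor:ldp:init} yields $\Prob\bigl(\sup_{t \leq \overline{T}_{\!\mathsf{st}}(\kappa)}|X_t^\sigma - \gamma_t| > \kappa/4\bigr) \leq \e^{-c/\sigma^2}$ for some $c > 0$. On the exceptional event, the uniform moment bound~\eqref{bishop} combined with Cauchy--Schwarz keeps the contribution to $\E|X^\sigma_{\overline{T}_{\!\mathsf{st}}(\kappa)} - a|^2$ negligible, yielding $\Wass_2(\mu^\sigma_{\overline{T}_{\!\mathsf{st}}(\kappa)}, \delta_a) \leq \kappa/2$ for $\sigma$ below some threshold.

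For the maintenance phase, apply Itô's formula to $|X_t^\sigma - a|^2$ and decompose the drift as $\nabla V(X) + \nabla F \ast \mu_t^\sigma(X) = \nabla W_a(X) + \mathcal{E}_t^\sigma(X)$, with $\mathcal{E}_t^\sigma(X) := \nabla F \ast \mu_t^\sigma(X) - \nabla F(X - a)$. Taking expectations yields
\begin{equation*}
    \frac{\dd}{\dd{t}}\E|X_t^\sigma - a|^2 = -2\,\E\bigl[(X_t^\sigma - a)\cdot\nabla W_a(X_t^\sigma)\bigr] - 2\,\E\bigl[(X_t^\sigma - a)\cdot\mathcal{E}_t^\sigma(X_t^\sigma)\bigr] + d\sigma^2.
\end{equation*}
On $B_\rho(a)$, Definition~\ref{def:Loc_convexity} together with $\nabla W_a(a)=0$ gives $(X - a)\cdot\nabla W_a(X) \geq C_W |X - a|^2$. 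The local-Lipschitz bound $(F-4)$, combined with~\eqref{bishop} through Cauchy--Schwarz, converts $|\mathcal{E}_t^\sigma(X)|$ into a correction of order $\Wass_2(\mu_t^\sigma,\delta_a)$. Setting $\phi(t) := \E|X_t^\sigma - a|^2$ then produces, as long as $\phi(t) \leq \kappa^2$, a differential inequality
\begin{equation*}
    \phi'(t) \leq -(2C_W - c_1\kappa)\,\phi(t) + c_2\sigma^2 + e_\sigma(t),
\end{equation*}
in which $e_\sigma(t)$ collects the contribution of trajectories that have exited $B_\rho(a)$.

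For $\kappa$ small, $2C_W - c_1\kappa > 0$, so Gr\"onwall's lemma together with $\phi(\overline{T}_{\!\mathsf{st}}(\kappa)) \leq \kappa^2/4$ gives $\phi(t) \leq \kappa^2$ on the maintenance window, \emph{provided that} $e_\sigma$ is absorbed. The main obstacle is precisely this: since the exit cost from $B_\rho(a)$ is strictly smaller than $H$, trajectories do leave $B_\rho(a)$ on the time scale $\e^{2H/\sigma^2}$. However, Assumption~\ref{assu:stable} implies that once outside $B_\rho(a)$ but still inside $\cDc$, the deterministic flow pushes trajectories back toward $a$ in bounded time, so the fraction of mass outside $B_\rho(a)$ at any given time remains $o_\kappa(1)$. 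Iterating the stabilization and maintenance arguments on successive sub-windows of length $\overline{T}_{\!\mathsf{st}}(\kappa)$, using the strong Markov property together with Freidlin--Wentzell estimates controlling the return to $B_\rho(a)$, should close the bootstrap and yield the claimed uniform-in-$t$ bound on $\Wass_2(\mu_t^\sigma, \delta_a)$.
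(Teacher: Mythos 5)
Your architecture — LDP-driven stabilization on $[0,\overline{T}_{\!\mathsf{st}}(\kappa)]$ followed by an It\^o contraction estimate on the maintenance window, with the self-consistency loop identified as the obstacle — matches the paper's reduction of the theorem to Lemma~\ref{lem:stab_in_finite_time} and Lemma~\ref{lem:S_k_control} (the latter resting on Lemma~\ref{lem:trick}). The stabilization step is essentially right. The decisive gap is in the maintenance step: you never introduce the auxiliary linear diffusion that makes the ``return to $B_\rho(a)$'' estimate possible. You close the argument by invoking ``the strong Markov property together with Freidlin--Wentzell estimates controlling the return to $B_\rho(a)$'', applied to $X^\sigma$. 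But $X^\sigma$ is a McKean--Vlasov process: its drift depends on $\mu_t^\sigma$, so it is only a time-inhomogeneous Markov process whose transition kernel evolves over the exponential horizon. Freidlin--Wentzell exit and return estimates (Proposition~\ref{prop:DZ_exit_time}, \cite[Thm.~5.7.11, Lem.~5.7.19]{DZ}) are established for time-homogeneous It\^o diffusions and do not apply to $X^\sigma$ directly; invoking them for $X^\sigma$ presupposes the very control of $\mu_t^\sigma$ you are trying to propagate, so the bootstrap is circular. The paper breaks this circularity by introducing the frozen-coefficient diffusion $Y^\sigma$ of \eqref{eq:def:Y}, driven by the same Brownian motion as $X^\sigma$ but with drift $-\nabla W_a=-\nabla V-\nabla F(\cdot-a)$: Freidlin--Wentzell theory applies to $Y^\sigma$ (Lemma~\ref{lem:Y_t_notin_B_rho}, via a multiplicative recursion over length-$\overline T$ sub-windows so that errors do not accumulate), and $\Prob(X_t^\sigma\notin B_\rho(a))$ is then controlled through the synchronous-coupling estimate $\sup_t|X_t^\sigma-Y_t^\sigma|$ (Proposition~\ref{prop:|X_t-Y_t|}, Corollary~\ref{cor:|X_t-Y_t|}), which in turn needs the excursion bookkeeping of Lemmas~\ref{lem:|X-Y|_control_inside}--\ref{lem:N_star_and_T_1}. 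This auxiliary process and its coupling are the mechanism that closes the loop; without them, ``iterating over sub-windows'' has no concrete content, and the claim that the mass outside $B_\rho(a)$ is $o_\kappa(1)$ (presumably you mean $o_\sigma(1)$) is left unproved.

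A secondary issue is the coefficient in your differential inequality. Cauchy--Schwarz with $(F-4)$ and \eqref{bishop} does give $|\mathcal{E}_t^\sigma(X)|\le \text{C}\,\Wass_2(\mu_t^\sigma,\delta_a)$ on $B_\rho(a)$, hence a correction to $\phi'(t)$ of size $2\text{C}\,\Wass_2(\mu_t^\sigma,\delta_a)\sqrt{\phi(t)}=2\text{C}\,\phi(t)$; but $\text{C}$ depends on $C'$, $r$, $\rho$ and the uniform moment bound $M$, and bears no a priori relation to $C_W$, so the claimed coefficient $-(2C_W-c_1\kappa)$ does not follow. If $\text{C}\ge C_W$, the contraction is lost and the maintenance bootstrap fails regardless of how small $\kappa$ is. Lemma~\ref{lem:trick} avoids this by a convexification device: replace $F$ by $\tilde F(x):=F(x)+\frac12\langle x;\mathcal{M}x\rangle$ with $\mathcal{M}:=-\nabla^2F(0)+\frac{C_W}{2}{\rm Id}$, symmetrize $\E[\langle X_t^\sigma-a;\nabla\tilde F(X_t^\sigma-Y_t^\sigma)\rangle]$ with an independent copy $Y_t^\sigma$ so that the convexity of $\tilde F$ yields a nonnegative contribution on $B_{\rho/2}(a)$, and absorb the compensating $\mathcal{M}$-term into the $\nabla V$ part via $\nabla^2 V\succeq\frac{C_W}{4}{\rm Id}+\mathcal{M}$ near $a$. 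This yields $\xi'(t)\le -\frac{C_W}{2}\xi(t)+d\sigma^2+K\sqrt{\Prob(X_t^\sigma\notin B_\rho(a))}$ uniformly over the admissible $F$, which your direct Cauchy--Schwarz estimate cannot guarantee.
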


This theorem can be easily proven using Lemmas \ref{lem:stab_in_finite_time} and \ref{lem:S_k_control} provided in Section~\ref{s:interm_results}. It is left for the reader

\subsection{Unbounded case}\label{ss:unbounded_case}

We now present the generalisation of the results above to the case where $\cDc$ is not bounded.

\begin{cor}
\label{cesare3}
    If $\cDc$ is an open and connected subset of $\R^d$, under Assumptions~\ref{assu:pot:V}--\ref{assu:pot:VetF2} and Assumptions~\ref{assu:firsttraj}, \ref{assu:stable}, the statements of Theorem~\ref{thm:main_exit_time} hold.
\end{cor}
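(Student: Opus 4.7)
The strategy is to reduce Corollary~\ref{cesare3} to Theorem~\ref{thm:main_exit_time} by truncating $\cDc$ with a sufficiently large ball. The key observation is that the effective potential $W_a$ is coercive: by $(V-2)$ and $(F-5)$, for $|x|$ large enough we have $\nabla^2 W_a(x) \succeq (\theta_1 - \theta_2)\mathrm{Id}$ with $\theta_1 - \theta_2 > 0$, hence $W_a(x) \to +\infty$ as $|x| \to \infty$, and $\langle \nabla W_a(x), x\rangle > 0$ for $|x|$ beyond some threshold. Fix a small $\eta > 0$ and choose $R > 0$ so large that the sublevel set $\{W_a < W_a(a) + H + \eta\}$ is contained in $B_R(0)$, the deterministic trajectory $(\gamma_t)_{t \geq 0}$ of Assumption~\ref{assu:firsttraj} remains inside $B_R(0)$, and $\langle \nabla W_a(x), x\rangle > 0$ on $\partial B_R(0)$.

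Let $\widetilde{\cDc}$ denote the connected component of $\cDc \cap B_R(0)$ containing $a$. Then $\widetilde{\cDc}$ is open, bounded, connected, contains $a$ together with the whole trajectory $(\gamma_t)$, and satisfies $\partial\widetilde{\cDc} \subset (\partial\cDc \cap \overline{B_R(0)}) \cup (\cDc \cap \partial B_R(0))$. It is stable by $-\nabla W_a$: on $\partial\cDc \cap \overline{B_R(0)}$ stability comes from that of $\cDc$, while on $\cDc \cap \partial B_R(0)$ the field $-\nabla W_a$ points strictly inward by the choice of $R$. The convergence $\psi_t(z) \to a$ for $z \in \partial\widetilde{\cDc}$ follows from Assumption~\ref{assu:stable} on the first piece; on the second piece, $z \in \cDc$ gives $\psi_t(z) \in \cDc$ by forward-invariance, and since $a$ is the unique attractor of the gradient flow in $\overline{\cDc}$ (a tacit consequence of Assumption~\ref{assu:stable}) we obtain $\psi_t(z) \to a$. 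The exit-cost of $\widetilde{\cDc}$ equals $H$: on $\partial B_R(0) \cap \partial\widetilde{\cDc}$ one has $W_a > W_a(a) + H + \eta$, whereas every near-minimising exit point $z^* \in \partial\cDc$ with $W_a(z^*) < W_a(a) + H + \eta$ lies in $B_R(0)$ by the choice of $R$ and belongs to $\partial\widetilde{\cDc}$ since any interior point $y \in \cDc$ near $z^*$ has $W_a(y) < W_a(a) + H + \eta$, so $\psi_t(y)$ stays in $\cDc \cap B_R(0)$ and connects $y$ to $a$. Thus Assumptions~\ref{assu:domain:bounded}--\ref{assu:stable} hold for $\widetilde{\cDc}$ with exit-cost $H$, and Theorem~\ref{thm:main_exit_time} applies to it.

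The lower Kramers bound transfers immediately via $\tau^\sigma_{\widetilde{\cDc}} \leq \tau^\sigma_\cDc$ (since $\widetilde{\cDc} \subset \cDc$). For the upper bound, apply the exit-location statement of Theorem~\ref{thm:main_exit_time} to the closed set $N_0 := \partial\widetilde{\cDc} \cap \partial B_R(0)$, on which $W_a > W_a(a) + H + \eta$: this yields $\Prob(X^\sigma_{\tau^\sigma_{\widetilde{\cDc}}} \in N_0) \to 0$ as $\sigma \to 0$. On the complementary event, the exit of $\widetilde{\cDc}$ occurs through $\partial\cDc$, so $\tau^\sigma_\cDc = \tau^\sigma_{\widetilde{\cDc}}$ and the upper Kramers bound transfers. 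The exit-location statement for $\cDc$ is analogous: if $N \subset \partial\cDc$ is closed with $\inf_N W_a > W_a(a) + H$, then $N \cap \overline{B_R(0)}$ is a closed subset of $\partial\widetilde{\cDc}$ with the same $W_a$-infimum, and on the event $\tau^\sigma_\cDc = \tau^\sigma_{\widetilde{\cDc}}$ the exit point of $\cDc$ coincides with that of $\widetilde{\cDc}$ and lies in $N \cap \overline{B_R(0)}$ whenever it lies in $N$.

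The main subtlety lies in verifying Assumption~\ref{assu:stable} for $\widetilde{\cDc}$, in particular the convergence $\psi_t(z) \to a$ for $z \in \cDc \cap \partial B_R(0)$ and the preservation of the cheapest exit points of $\cDc$ in $\partial\widetilde{\cDc}$. Both rely crucially on the coercivity of $W_a$: the forward orbit of any point in $\cDc \cap B_R(0)$ with $W_a$-value below $W_a(a)+H+\eta$ cannot escape $B_R(0)$, since $W_a$ decreases along the flow and the sublevel set $\{W_a<W_a(a)+H+\eta\}$ is contained in $B_R(0)$. This geometric observation, combined with uniqueness of the attractor $a$ in $\overline{\cDc}$, is the essential content of the reduction; were one to replace the ball by a connected component of a sublevel set of $W_a$, the verification would be even more direct but the bookkeeping slightly heavier.
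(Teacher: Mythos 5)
Your proof is correct and follows essentially the same strategy as the paper: truncate $\cDc$ to a bounded stable domain containing $a$, apply Theorem~\ref{thm:main_exit_time} to it, then invoke the exit-location result on the artificially introduced piece of boundary to conclude $\tau^\sigma_{\widetilde{\cDc}}=\tau^\sigma_\cDc$ with probability tending to one. The only difference is the choice of truncating set --- the paper intersects $\cDc$ with the sublevel set $L^{-}_{H+\xi}$ of $W_a$, so that stability and the fact that the new boundary piece has cost strictly above $H$ are automatic, whereas you use a large ball $B_R(0)$, which forces the extra verifications you carry out (and which you yourself note would be avoided by the sublevel-set choice).
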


The control of the law also holds immediately even if $\cDc$ is unbounded.

\begin{cor}
\label{cesare4}
    If $\cDc$ is an open and connected subset of $\R^d$, under Assumptions~\ref{assu:pot:V}--\ref{assu:pot:VetF2} and Assumptions~\ref{assu:firsttraj}, \ref{assu:stable}, the statement of Theorem \ref{thm:control_of_law} holds.
\end{cor}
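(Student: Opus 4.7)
The plan is to reduce the unbounded case to the bounded setting of Theorem~\ref{thm:control_of_law} by exhibiting a carefully chosen bounded open subdomain $\widetilde{\cDc} \subseteq \cDc$ for which that theorem applies with exit-cost equal to $H$. Since the law $(\mu_t^\sigma)$ is defined by the SDE~\eqref{eq:main_SDE} and is insensitive to the choice of domain, any conclusion for $\widetilde{\cDc}$ transfers verbatim to $\cDc$.

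The first step is the coercivity observation: combining $(V-2)$ and $(F-5)$ gives $\nabla^2 W_a(x) \succeq (\theta_1 - \theta_2)\,{\rm Id}$ for $|x|$ large, so every sublevel set of $W_a$ is bounded. Since the deterministic trajectory $(\gamma_t)$ is itself bounded (by coercivity of $V$ at infinity), the quantity
\begin{equation*}
M := \max\Big(H,\ \sup_{t \ge 0} \bigl(W_a(\gamma_t) - W_a(a)\bigr)\Big) + 1
\end{equation*}
is finite. I would then define $\widetilde{\cDc}$ as the connected component of $a$ inside the open set $\cDc \cap \{x \in \R^d : W_a(x) < W_a(a) + M\}$. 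By construction $\widetilde{\cDc}$ is open, connected, bounded, contains $a$ and the full trajectory $(\gamma_t)$, and is a subdomain of $\cDc$, so Assumptions~\ref{assu:domain:bounded} and~\ref{assu:firsttraj} hold for it.

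The second step is to verify that $\widetilde{\cDc}$ is stable under $-\nabla W_a$ and to identify its exit-cost. Stability follows from combining Assumption~\ref{assu:stable} (which keeps trajectories inside $\cDc$) with the Lyapunov property that $W_a$ strictly decreases along its own gradient flow (which keeps trajectories inside the sublevel set). The boundary of $\widetilde{\cDc}$ is contained in $\partial \cDc \cup \{W_a = W_a(a) + M\}$; the first piece contributes $W_a \ge W_a(a) + H$ by definition of $H$, and the second piece contributes $W_a = W_a(a) + M > W_a(a) + H$, so the exit-cost $\widetilde H$ is at least $H$. For the reverse inequality, take a minimizer $z^\ast$ of $W_a$ on $\partial\cDc$: by Assumption~\ref{assu:stable}, $\psi_t(z^\ast)\to a$, and since $\psi_t(z^\ast)$ is non-constant (otherwise $z^\ast$ would be a fixed point of the flow, contradicting $\psi_t(z^\ast)\to a$), one has $\psi_t(z^\ast) \in \cDc$ with $W_a(\psi_t(z^\ast)) < W_a(a) + H$ for small $t > 0$; this places $\psi_t(z^\ast)$ in the connected component of $a$ inside the sublevel set, i.e.\ in $\widetilde{\cDc}$. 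Letting $t \to 0^+$ gives $z^\ast \in \overline{\widetilde{\cDc}} \setminus \widetilde{\cDc} = \partial \widetilde{\cDc}$, hence $\widetilde H = H$.

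The final step is a direct application of Theorem~\ref{thm:control_of_law} to $\widetilde{\cDc}$: for any $\kappa > 0$ small enough there exist $\overline{T}_{\!\mathsf{st}}(\kappa) > 0$ and $\sigma_\kappa > 0$ such that
\begin{equation*}
\sup_{0 < \sigma < \sigma_\kappa}\ \sup_{t \in [\overline{T}_{\!\mathsf{st}}(\kappa),\,\e^{2H/\sigma^2}]} \Wass_2(\mu_t^\sigma, \delta_a) \le \kappa,
\end{equation*}
which is exactly the desired conclusion for $\cDc$. The main obstacle in this plan is the careful connectedness argument certifying $\widetilde H = H$: one must check that the inward orbit $\psi_t(z^\ast)$ for $0 < t \ll 1$ sits in the same connected component of $\cDc \cap \{W_a < W_a(a) + M\}$ as the point $a$. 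This is a delicate local analysis of the gradient flow of $W_a$ near the boundary of the basin of attraction, and it is the only step where the unboundedness of $\cDc$ plays a non-trivial role.
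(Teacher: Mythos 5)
Your strategy is correct and mirrors the paper's: replace the unbounded $\cDc$ by a bounded domain obtained from intersecting with a sublevel set of $W_a$, verify the assumptions for that bounded domain, apply Theorem~\ref{thm:control_of_law} there, and observe that the conclusion is a statement purely about $\mu_t^\sigma$ (which does not depend on the domain) and about $H$. The paper carries this out for Corollary~\ref{cesare3} using $\cDc' := \cDc \cap L_{H+\xi}^-$ with $\xi>0$ small, and states that Corollary~\ref{cesare4} follows ``the same way''; your choice of cutoff level is instead a \emph{large} $M$ that contains the whole deterministic trajectory $(\gamma_t)$. This is a genuine improvement on the paper's presentation: with $L_{H+\xi}^-$ the paper has to dispose of the possibility that $\gamma$ leaves the sublevel set (recall $\gamma$ follows $-\nabla V$, not $-\nabla W_a$, so $W_a$ is not a priori a Lyapunov function along $\gamma$), whereas your $M$ makes Assumption~\ref{assu:firsttraj} for $\widetilde{\cDc}$ automatic.

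One remark worth making: the step you flag as the ``main obstacle'' is in fact superfluous. You do not need $\widetilde H = H$ for $\widetilde{\cDc}$; the one-line inequality $\widetilde H \geq H$, which follows immediately from the decomposition $\partial \widetilde{\cDc} \subseteq \partial\cDc \cup \{W_a = W_a(a)+M\}$, already suffices. Indeed, applying Theorem~\ref{thm:control_of_law} to $\widetilde{\cDc}$ yields control of $\Wass_2(\mu_t^\sigma;\delta_a)$ on $\bigl[\overline{T}_{\!\mathsf{st}}(\kappa);\, \e^{2\widetilde H/\sigma^2}\bigr]$, and since $\widetilde H \geq H$ this interval contains $\bigl[\overline{T}_{\!\mathsf{st}}(\kappa);\, \e^{2H/\sigma^2}\bigr]$, which is exactly the claim. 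Thus the delicate connectedness analysis of the inward orbit $\psi_t(z^\ast)$, and the existence-of-minimizer argument on the possibly noncompact $\partial\cDc$, can both be dropped; they would be needed for the exit-\emph{location} part of Corollary~\ref{cesare3} (where one must ensure the exit happens through $\partial\cDc$ itself, not through the artificial level-set boundary), but not here.
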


Proofs of Corollary~\ref{cesare3} and Corollary~\ref{cesare4} are postponed to Section~\ref{s:main_res_proof}.

\subsection{Comparison to previous results}\label{ss:comparison}

In the seminal work~\cite{HIP2}, S.~Herrmann, P.~Imkeller, and D.~Peithmann proved the existence of the self-stabilizing diffusion in the irreversible case. The assumptions they used correspond to \ref{assu:pot:V} and \ref{assu:pot:F} if confinement and interaction were gradients of some regular potentials, except for a slight difference in the interaction term (this difference was discussed in Section~\ref{ss:Existence}). In the same work, the authors show the exit-time result for SSD, but, in order to do that, they had to assume convexity of confinement and interaction. Removal of this assumption, that we present in this paper, is a big improvement of previous results. Note, that, unlike in \cite{HIP2}, we solve the exit-time problem for the reversible case (confinement and interaction are gradients of some regular functions). Nevertheless, we could treat the general situation, see Section~\ref{s:extensions} on the possible extensions of our results.

Another difference between our approach and the one presented in the paper \cite{HIP2} is that, after controlling the law of the process $X^\sigma$, we use coupling techniques to prove the exit-time, while the approach used by S.~Herrmann, P.~Imkeller, and D.~Peithmann consists in reconstructing the Freidlin-Wentzell techniques and taking advantage of the contractivity of the drift.

In \cite{EJP}, J.~Tugaut focused on the reversible case of the SSD with potentials~$V$ and~$F$ being convex. He proved a similar to ours result by using another method than in \cite{HIP2}. The approach of \cite{EJP} was to apply the Freidlin-Wentzell theory without adapting it to the McKean-Vlasov diffusions. In this work, the classical large deviations principle theory for processes is used to the associated system of particles
\begin{equation}\label{eq:system_of_particles}
    \dd{X_t^{i, N}} = \sigma \dd{B_t} - \left( \nabla V(X_t^{i, N}) + \frac{1}{N}\sum_{j = 1}^N \nabla F(X_t^{i, N} - X_t^{j, N})\right) \dd{t},
\end{equation}
after which a trajectorial uniform propagation of chaos is established. Using the propagation of chaos, the author obtained the Kramers' type law.

In \cite{ECP} J.~Tugaut employed a different method, applicable to the case where the parts of the drift term are not necessarily assumed to be gradients of a regular function, although they remain globally contractive. This method primarily revolves around controlling the law at time $t$ of $X^\sigma$, denoted as $\mu_t^\sigma$. Notably, J.~Tugaut demonstrated that this law converges to $\delta_a$ in Wasserstein distance for $t \to +\infty$. Subsequently, a synchronous coupling with a diffusion, where the drift is represented as $x\mapsto-\nabla V(x)-\nabla F\ast\delta_a(x)$ instead of $x\mapsto-\nabla V(x)-\nabla F \ast \mu^\sigma_t(x)$, is employed. Exploiting the contractivity, it is straightforward to prove that the two diffusions remain close. Consequently, the exit-time of $X^\sigma$ behaves similarly to that of the coupled diffusion.

This approach has been extended to non-convex scenarios in the reversible case, as described in \cite{JOTP2}. In this context, $V$ is not necessarily convex, although $F$ exhibits sufficient convexity to ensure convexity of the effective potential $W_a = V + F(\cdot - a)$. As a result, coupling between the two diffusions is straightforward, allowing us to infer the exit-time of $X^\sigma$ from that of the coupled diffusion.

The convexity assumption on $W_a$ has been removed in \cite{Alea}, though this result is limited to the one-dimensional case. Unfortunately, the method used there cannot be directly extended to the general-dimensional case. Thus, it becomes essential to find an alternative way to control the law.

In \cite{COSA}, J.~Tugaut demonstrated that $\mu^\sigma$ does not always converge to $\delta_a$. This limitation arises when $W_a$ fails to reach its global minimum at $a$, therefore, in order to control the law of the process (at least until exit-time) other methods should be used.

Despite all these developments, the exit-time problem for SSD with general (non-convex) coefficients was an open problem throughout all these years. We solve it in this paper by significantly improving the coupling method introduced in \cite{ECP}.

\subsection{Discussions on extension}\label{s:extensions}

In this section, we provide some possible extensions to our results.

\subsubsection{Non-identity matrix as the diffusion coefficient}

In this work, we have simplified the study by assuming that the diffusion coefficient takes the form $\sigma \text{Id}$. However, for certain algorithmic applications such as molecular dynamics, it could be beneficial to consider scenarios where the diffusion coefficient is not directly proportional to the identity matrix, as discussed for example in \cite{clubdes5}.

To make further progress, it would be a significant improvement to include the scenario where the diffusion coefficient is given by $\sigma M$, with $M$ being a non-degenerate matrix. This particular situation has been studied in, for instance, \cite{DT1, DT2, Monmarche1}.

The techniques developed in the present work can be readily adapted for this non-identity diffusion coefficient case.

However, a more challenging extension would involve considering cases where $M$ is degenerate. This would allow us to address the Langevin kinetic diffusion, where both position and velocity play crucial roles. Combining techniques we have developed with those from \cite{clubdes5}, we firmly believe that we can obtain valuable insights into the asymptotic behaviour of the first exit-time.

\subsubsection{Initial random variable}

Another possible extension is related to the initial random variable. In the current work, we establish the asymptotic behaviour of the exit-time for $X_0 := x_{\mathrm{init}} \in \R^d$. However, for studying the basins of attraction, as was done in~\cite{KRM}, it is crucial to consider scenarios where $\mu_0^\sigma := \Law(X_0^\sigma)$ is not necessarily a Dirac measure. Specifically, we may be interested in cases where $\mu_0^\sigma:=\mu_0$, with the measure $\mu_0$ being compactly supported in $\cDc$.

In this situation, we need to make a slight modification to Assumptions~\ref{assu:firsttraj}. Instead of considering $\gamma'(t)=-\nabla V(\gamma_t)$, we would need to consider the partial differential equation:

\begin{equation*}
    \frac{\partial}{\partial t}\mu_t^0={\rm div}\left(\mu_t^0(\nabla V+\nabla F\ast\mu_t^0)\right),
\end{equation*}
with $\mu_0^0=\mu_0$. This corresponds to the granular media equation with zero noise. The associated dynamical system that approximates the diffusion $X^0$ on $[0; T]$ (with $T>0$) due to the large deviations principle is thus given by:
\begin{equation*}
    \rho_t(x_{\mathrm{init}})= x_\mathrm{init} - \int_0^t\nabla V(\rho_s(x_\mathrm{init})) \dd{s} - \int_0^t \nabla F\ast\mu_s^0(\rho_s(x_\mathrm{init})) \dd{s},
\end{equation*}
for any $x_{\mathrm{init}} \in \supp(\mu_0)$. In this case, Assumptions~\ref{assu:firsttraj} would be: for any $x_{\mathrm{init}}\in \cDc\cap \supp(\mu_0)$ and for any $t \geq 0$, we have $\rho_t(x_\mathrm{init}) \in  \cDc$.

The techniques developed in the present work can be seamlessly adapted to handle this situation.

\subsubsection{Reflexion on the boundary}

In this work, the diffusion process takes place in the entire phase space $\R^d$. However, we can consider a subspace of $\R^d$ instead. This could be achieved by introducing a reflection on certain boundaries, as it was done, for example, in~\cite{Tanaka}. Such an extension would be a significant improvement compared to~\cite{articledemerde}, where the uniform convexity of both confinement and interaction potentials was assumed.

In the mentioned article, the domain $\cGc$ in which the diffusion takes place satisfies~$d\!\left(\overline{\cDc}; \partial \cGc\right) > 0$, which simplifies the study. We believe that techniques we have developed could treat this case. However, considering scenarios where $\cDc \cap \cGc^c \neq \emptyset$ is more challenging. This could require extending the large deviation techniques for processes with reflection, something that is not done yet even for linear case.

\subsubsection{More accurate estimates}

In this paper, our focus has been on establishing the Kramers' law, that is a limit in probability of $\frac{\sigma^2}{2}\log(\tau^\sigma_{\cDc})$ as $\sigma$ approaches $0$, as well as the exit-location result. However, in~\cite{HIP2}, the authors have obtained a more precise estimate, which could be of interest in our context. For example, the so-called Arrhenius law was established, i.e. the convergence of
\begin{equation*}
    \frac{\sigma^2}{2} \log \E (\tau_\cDc^\sigma) \xrightarrow[\sigma \to 0]{} H > 0.
\end{equation*}
Unfortunately, since we do not provide the control of the law of the process after the exit-time, we could not use the standard method to show the Arrhenius law in the current work.

Additionally, it is well-known, as discussed in~\cite{ET:exponential}, that the first exit-time $\tau^\sigma_\cDc$ for a linear (It\^{o}) diffusion satisfies the following limit:

\begin{equation*}
    \frac{\tau^\sigma_\cDc}{\E[\tau^\sigma_\cDc]} \xrightarrow[\sigma \to 0]{\Law} \mathcal{E}(1),
\end{equation*}
where the convergence is meant in law, and $\mathcal{E}(1)$ is the exponential law with a parameter equal to $1$. The same behaviour for self-stabilizing diffusions is not established yet even in the case where both $V$ and $F$ are convex.

In \cite{Bovier1, Bovier2}, A.~Bovier, M.~Eckhoff, V.~Gayrard, and M.~Klein studied the exit-time problem for linear reversible diffusion process using potential theory approach. Using these techniques, the authors could not only establish the Arrhenius law for multi-well potential in $\R^d$, but also prefactor of the convergence. Namely, the following equality was established:
\begin{equation*}
    \E[\tau^\sigma_\cDc] = C^*\e^{\frac{2H}{\sigma^2}}\big(1 + O(\sigma|\log(\sigma)|)\big),
\end{equation*}
where the constant $C^* > 0$ depends on the derivatives of the potential $V$ at the point of attraction $a$ as well as the saddle points surrounding the well under consideration. For the explicit form of the prefactor see \cite{Bovier1}.

Similar methods could be also used for the self-stabilizing diffusion. However, that would imply studying the associated PDE for the law of the process:
\begin{equation*}
    \frac{\partial}{\partial t}\mu_t^\sigma = \frac{\sigma^2}{2}\Delta \mu_t^\sigma + {\rm div}\left(\mu_t^\sigma(\nabla V + \nabla F\ast\mu_t^\sigma)\right),  
\end{equation*}
which is considered to be a hard problem due to its non-linearity. These questions could be the focus of future studies.

\subsubsection{Non-reversible case}

In this work, we have focused on the case where both the confinement and the interaction terms are gradients of some potentials. However, it would be valuable to consider non-reversible situations of the form:

\begin{equation*}
    X_t = X_0 + \sigma M B_t + \int_0^t a(X_s) \dd{s} + \int_0^t b\ast\mu_s^\sigma(X_s) \dd{s},
\end{equation*}

where $a$ and $b$ are general vector fields on $\R^d$. It is worth noting that in previous works such as~\cite{clubdes5, HIP2, ECP}, the authors have successfully addressed this problem, but in the contractive (convex confinement and interaction) case.

The techniques developed in this paper can readily be adapted to handle the non-reversible case. However, the exit-cost is not explicit in this situation, which is why we have described the reversible case here.

\subsubsection{More general McKean-Vlasov diffusions}

A broader class of nonlinear diffusion processes can be considered. For example:

\begin{equation*}
    \dd{X_t} = \sigma \dd B_t - \nabla V(X_t) \dd{t}-b(X_t,\mu_t^\sigma) \dd{t},
\end{equation*}

where the nonlinear drift $b$ takes the form

\begin{equation*}
    b(x,\mu):= \int_{\R^d} B(x,y) \mu(\dd{y}).
\end{equation*}

Here, the function $B$ is required to be regular and maps from $\R^{d} \times\R^d$ to $\R^d$. Such a generalization would have significant implications for theoretical purposes (as shown in \cite{GvalaniSchlichting}) as well as applications (see e.g. \cite{Maurelli-batteries}). We firmly believe that the techniques developed in this work can be adapted to handle a wide range of situations within this framework.

For algorithmic applications, it would be also interesting to include jumps in the process, as discussed in~\cite{Graham2, Graham1}. This could be a subject of future studies.

\subsubsection{Extension on the domain \texorpdfstring{$\cDc$}{D} and metastability}

An important yet challenging extension concerns the domain $\cDc$ itself. In this work, we have confined our study to cases where $\overline{\cDc}$ is stable under the effective potential $W_a$. However, the most interesting scenario arises when the saddle point lies on the boundary of $\cDc$.

Moreover, it would be interesting to establish some metastable properties of $X^\sigma$, that is considering $t(\sigma)$ as a function of $\sigma$ and investigating $X^\sigma_{t(\sigma)}$ in metastable confinement as it was done in \cite{FW}. Complexity of this problem in the case of SSD is that the drift itself (the effective potential) may change after the transition of the process from one metastable state to another. These questions could be the focus of future studies.

\subsubsection{System of particles}

For algorithmic applications, it is essential to consider the associated system of particles described by Equation~\eqref{eq:system_of_particles}. In this system, the measure $\mu_t^X$ is replaced by $L_t^\sigma := \frac{1}{N} \sum_{j=1}^N \delta_{X_t^j}$.

In \cite{EJP}, J.~Tugaut has obtained the exit-time of the McKean-Vlasov diffusion from the system of particles in the convex case. Consequently, it appears feasible to do the opposite and establish the exit-time of the system of particles based on the exit-time of the McKean-Vlasov diffusion. Similar techniques like a trajectorial uniform propagation of chaos (see for example \cite{BRTV, CGM, Sznitman}) can be used. However, in \cite{EJP}, convexity was essential for controlling the law, which is now also available in the general situation due to the current work.

\section{Intermediate results}\label{s:interm_results}

In this preliminary section, we will give the key results which allow us to prove the main theorems related to exit-time in Section~\ref{s:main_res_proof}. Their proofs are given in Section~\ref{s:interm_res_proof}.

\subsection{Stabilisation in finite time}

Let us define the following two deterministic times for any $\kappa > 0$ small enough:
\begin{equation*}
\begin{aligned}
    T_{\mathsf{st}}^\sigma(\kappa) &:= \inf\left\{t \geq 0\,\,:\,\,\Wass_2(\mu_t^\sigma; \delta_a) \leq \kappa\right\}, \\
    S_{\mathsf{st}}^\sigma(\kappa) &:=  \inf\left\{t \geq T_\mathsf{st}^\sigma(\kappa)\,\,:\,\,\Wass_2(\mu_t^\sigma; \delta_a) > \kappa\right\};
\end{aligned}
\end{equation*}
and we let the infima to be equal to $+\infty$ if respective sets are empty. 

First key result consists in obtaining the existence of a time $T$ such that $\Wass_2(\mu_t^\sigma; \delta_a)$ is small and such that $X_T^\sigma$ is concentrated around $a$.

\begin{lem}
\label{lem:stab_in_finite_time}
    Under Assumptions~\ref{assu:pot:V}--\ref{assu:stable}, for any $\kappa > 0$ there exist $\overline{T}_{\!\textsf{st}}(\kappa) > 0$ and $\sigma_\kappa > 0$ such that:
    \begin{equation*}
        T_{\textsf{st}}^\sigma(\kappa) \leq  \overline{T}_{\!\textsf{st}}(\kappa)\quad \text{for any } 0 < \sigma < \sigma_\kappa.
    \end{equation*} 
    Moreover,
    \begin{equation*}
        \lim_{\sigma \to 0} \Prob\! \left( \left|X_{\overline{T}_{\!\textsf{st}}(\kappa)}^\sigma - a \right| > \kappa \right) = 0.
    \end{equation*}
\end{lem}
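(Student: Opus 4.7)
The plan is to combine the deterministic convergence $\gamma_t \to a$ from Assumption~\ref{assu:firsttraj} with the large deviations principle of Proposition~\ref{cor:ldp:init}, and then use the uniform moment estimate \eqref{bishop} to promote pathwise closeness to closeness in $\Wass_2$.

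First I would fix $\overline{T}_{\!\mathsf{st}}(\kappa) > 0$ large enough that $|\gamma_t - a| < \kappa/4$ for every $t \geq \overline{T}_{\!\mathsf{st}}(\kappa)$; this is possible by Assumption~\ref{assu:firsttraj}. The rate function \eqref{martika} vanishes on the deterministic path $\gamma$, so the LDP in Proposition~\ref{cor:ldp:init} yields the standard Freidlin--Wentzell-type statement
\begin{equation*}
    \lim_{\sigma \to 0} \Prob\!\left( \sup_{0 \leq t \leq \overline{T}_{\!\mathsf{st}}(\kappa)} |X_t^\sigma - \gamma_t| > \eta \right) = 0 \quad\text{for every } \eta > 0.
\end{equation*}
Applying this with $\eta = \kappa/4$ (and combining with the choice of $\overline{T}_{\!\mathsf{st}}(\kappa)$) immediately gives $\Prob(|X_{\overline{T}_{\!\mathsf{st}}(\kappa)}^\sigma - a| > \kappa/2) \to 0$, which already establishes the second assertion of the lemma (and is even slightly stronger than required).

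For the Wasserstein estimate, I would use $\Wass_2^2(\mu_T^\sigma; \delta_a) = \E|X_T^\sigma - a|^2$ with $T = \overline{T}_{\!\mathsf{st}}(\kappa)$ and split the expectation according to the event $A_\sigma := \{|X_T^\sigma - a| > \kappa/2\}$:
\begin{equation*}
    \E\!\left[|X_T^\sigma - a|^2\right] \leq \frac{\kappa^2}{4} + \E\!\left[|X_T^\sigma - a|^2 \mathbf{1}_{A_\sigma}\right].
\end{equation*}
By Cauchy--Schwarz the remainder term is bounded by $\bigl(\E|X_T^\sigma - a|^4\bigr)^{1/2} \Prob(A_\sigma)^{1/2}$; Proposition~\ref{prop:existence} bounds $\E|X_T^\sigma - a|^4$ uniformly in $\sigma \in (0,1]$, while the previous paragraph shows $\Prob(A_\sigma) \to 0$. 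Hence there exists $\sigma_\kappa > 0$ such that the whole quantity is below $\kappa^2$ for $\sigma < \sigma_\kappa$, which yields $T_{\mathsf{st}}^\sigma(\kappa) \leq \overline{T}_{\!\mathsf{st}}(\kappa)$.

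The main technical point is the passage from convergence in probability to convergence in $\Wass_2$: the LDP by itself only controls pathwise deviations with overwhelming probability, and without the uniform $4$-th moment bound inherited from \eqref{bishop} the Cauchy--Schwarz truncation above would break down. Everything else is essentially a bookkeeping exercise of choosing thresholds ($\kappa/4$ inside the LDP, $\kappa/2$ in the truncation) so that both claims of the lemma hold simultaneously for the same $\overline{T}_{\!\mathsf{st}}(\kappa)$.
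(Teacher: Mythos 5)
Your proof is correct and follows essentially the same route as the paper: take $\overline{T}_{\!\mathsf{st}}(\kappa)$ so that the deterministic trajectory $\gamma$ is close to $a$ by that time, use the LDP of Proposition~\ref{cor:ldp:init} to show $X^\sigma$ stays near $\gamma$ on $[0;\overline{T}_{\!\mathsf{st}}(\kappa)]$ with high probability, and then promote this to a $\Wass_2$ bound via Cauchy--Schwarz and the uniform moment estimate from Proposition~\ref{prop:existence}. The only cosmetic difference is that the paper records the exponential decay rate $e^{-2A/\sigma^2}$ for $\Prob(A_\sigma)$ before plugging it into the truncation, whereas you only use that $\Prob(A_\sigma)\to 0$, which suffices here.
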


An important implication of this lemma is that, with high probability, the exit from the domain $\cDc$ does not occur before time $\overline{T}_{\!\mathsf{st}}(\kappa)$ (see Section~\ref{s:interm_res_proof} for the proof). Consider the following corollary.  

\begin{cor}
\label{cor:stab_in_finite_time2}
    Under Assumptions~\ref{assu:pot:V}--\ref{assu:stable}, for any $\kappa>0$ the following limit holds:
\begin{equation*}
    \lim_{\sigma\to0} \Prob\!\left(\tau^\sigma_\cDc \leq \overline{T}_{\!\mathsf{st}}(\kappa) \right) = 0\,.
\end{equation*}
\end{cor}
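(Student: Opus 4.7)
The plan is to invoke the large deviations principle of Proposition~\ref{cor:ldp:init} to confine $X^\sigma$ near the deterministic curve $\gamma$ on the finite interval $[0, \overline{T}_{\!\mathsf{st}}(\kappa)]$, and then use Assumption~\ref{assu:firsttraj} to rule out any exit during this window. First I would observe that, by Assumption~\ref{assu:firsttraj}, the flow $(\gamma_t)_{t \geq 0}$ stays in $\cDc$ and converges to $a \in \cDc$, so the trajectory $K := \{\gamma_t : 0 \leq t \leq \overline{T}_{\!\mathsf{st}}(\kappa)\}$ is a compact subset of the open set $\cDc$. This allows me to pick $\eta > 0$ small enough that the closed $\eta$-neighborhood $K_\eta$ of $K$ remains contained in $\cDc$.

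Next I would introduce the closed set $\Gamma_\eta := \{\varphi \in \cCc([0, \overline{T}_{\!\mathsf{st}}(\kappa)], \R^d) : \sup_{0 \leq t \leq \overline{T}_{\!\mathsf{st}}(\kappa)} |\varphi_t - \gamma_t| \geq \eta\}$, for which $\gamma \notin \Gamma_\eta$. The key observation is that the rate function $I$ from Proposition~\ref{cor:ldp:init} has $\gamma$ as its unique zero: $I(\varphi) = 0$ forces $\dot{\varphi}_t = -\nabla V(\varphi_t) - \nabla F(\varphi_t - \gamma_t)$ with $\varphi_0 = x_{\mathrm{init}}$, and since the right-hand side is locally Lipschitz (by $(V-5)$ and $(F-4)$) while $\gamma$ already satisfies this ODE (because $\nabla F(0) = 0$ by continuity, using $(F-2)$), uniqueness of the Cauchy problem forces $\varphi = \gamma$. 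As $I$ is a good rate function and $\Gamma_\eta$ is closed with $\gamma \notin \Gamma_\eta$, the infimum $\inf_{\varphi \in \Gamma_\eta} I(\varphi)$ is attained and strictly positive.

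Finally, the upper bound of the LDP yields $\Prob(X^\sigma \in \Gamma_\eta) \to 0$ as $\sigma \to 0$; on the complementary event, $X_t^\sigma \in K_\eta \subset \cDc$ for every $t \in [0, \overline{T}_{\!\mathsf{st}}(\kappa)]$, so $\tau_\cDc^\sigma > \overline{T}_{\!\mathsf{st}}(\kappa)$, which gives the desired conclusion. The main obstacle is identifying $\gamma$ as the unique minimizer of $I$ and extracting strict positivity of $\inf_{\Gamma_\eta} I$ from lower semicontinuity of a good rate function; once that is in hand, the exit conclusion is an immediate LDP upper bound.
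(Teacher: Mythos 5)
Your argument is correct and follows essentially the same route as the paper: enclose $\gamma$ on $[0,\overline{T}_{\!\mathsf{st}}(\kappa)]$ in a tube contained in $\cDc$, note that the event of exit before $\overline{T}_{\!\mathsf{st}}(\kappa)$ forces the path to leave that tube, and kill the probability of leaving the tube via the LDP upper bound and the strict positivity of $\inf I$ over the closed complement of the tube. The only cosmetic difference is that the paper reuses the $\kappa/3$-tube $\Phi$ already introduced in the proof of Lemma~\ref{lem:stab_in_finite_time} (hence needs $\kappa$ small enough so that $d(\{\gamma_t\},\partial\cDc)>\kappa/3$), whereas you introduce an independent tube radius $\eta$; your spelled-out verification that $\gamma$ is the unique zero of $I_{\overline{T}_{\!\mathsf{st}}(\kappa)}$ (Picard--Lindel\"of plus $\nabla F(0)=0$ from $(F\text{-}2)$) is a welcome bit of extra detail that the paper leaves implicit.
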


\subsection{The coupling method}

We now introduce the diffusion $Y^\sigma := (Y_t^\sigma,\, t \geq T_{\mathsf{st}}^\sigma(\kappa))$ solution to the following linear SDE:

\begin{equation}\label{eq:def:Y}
\begin{aligned}
    Y_t^\sigma &= X_{T_{\mathsf{st}}^\sigma(\kappa)}^\sigma + \sigma (B_t - B_{T_{\mathsf{st}}^\sigma(\kappa)}) - \int_{T_{\mathsf{st}}^\sigma(\kappa)}^t \nabla V(Y_s^\sigma) \dd{s} \\
    & \quad  - \int_{T_{\mathsf{st}}^\sigma(\kappa)}^t\nabla F\left(Y_s^\sigma - a \right) \dd{s},
\end{aligned}
\end{equation}

where $(B_t, t \geq 0)$ is the same Brownian motion that drives the main equation \eqref{eq:main_SDE}. Note, that this SDE has a unique solution (see for example \cite[Theorem~10.2.2, p.~255]{Stroock}). 

Note also that $Y^\sigma$ is a linear diffusion. As a consequence, we can apply the classical Freidlin-Wentzell theory, see~\cite{DZ,FW}, for estimating the first exit-time as the diffusion coefficient tends to $0$.

Apart from the processes $(Y^\sigma)_{0 < \sigma < 1}$ that is defined by SDE \eqref{eq:def:Y}, we also define the following family of processes that constitute Itô diffusions and will help us to study stochastic properties of $Y^\sigma$. For any $y \in \R^d$ and for any $0 < \sigma < 1$ define $(Y^{y, \sigma}_t, t \geq 0)$ as the unique solution to the following SDE:
\begin{equation}\label{eq:def:Y_y_sigma}
    Y_t^{y, \sigma} = y + \sigma B_t - \int_0^t \nabla V(Y_s^{y, \sigma}) \dd{s} - \int_0^t \nabla F(Y_s^{y, \sigma} - a)\dd{s}.
\end{equation}

Following the standard notation for diffusions, we will drop the initial point $y$ for $Y^{y, \sigma}$, as well as for all random variables that are functions of $Y^{y, \sigma}$, and put it as a subscript under the probability measure. Namely, for any $y \in \R^d$ we introduce a probability measure $\TrProb_y$ that is simply a restriction of $\Prob$ to the measurable space $\big(\Omega, \sigma(Y^{y, \sigma}_t: t\geq 0)\big)$.

The following proposition is a classical result of Freidlin--Wentzell theory for the exit-time of linear diffusions of the type \eqref{eq:def:Y_y_sigma}. Consider:

\begin{prop}[\cite{DZ}, Theorem 5.7.11]\label{prop:DZ_exit_time}
    Let Assumption~\ref{assu:pot:V} be satisfied and let $G \subset \R^d$ be a domain such that Assumptions~\ref{assu:domain:bounded}--\ref{assu:stable} are satisfied for it and its exit-cost $\ds H_G := \inf_{z \in \partial G} \{W_a(z) - W_a(a)\}$. Let $K \subset G$ be a compact set. Define $\tau^{Y, \sigma}_G := \inf\{t \geq 0: Y^{y, \sigma}_t \notin G\}$. Then, for any $\delta > 0$ we have
    \begin{equation*}
        \lim_{\sigma \to 0} \sup_{y \in K} \TrProb_y\! \left(\exp{\frac{2(H_G - \delta)}{\sigma^2}} \leq \tau^{Y, \sigma}_{G} \leq \exp{\frac{2(H_G + \delta)}{\sigma^2}} \right) = 1.
    \end{equation*}
\end{prop}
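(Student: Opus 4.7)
The plan is to invoke the classical Freidlin--Wentzell framework for the linear Itô diffusion $Y^{y,\sigma}$, whose drift is the gradient of the effective potential $W_a$. Because $Y^{y,\sigma}$ is a genuine Markov diffusion (no measure dependence), it satisfies the sample-path LDP on $\mathcal{C}([0,T];\R^d)$ with rate $\sigma^2/2$ and good rate function
\[
    I_T^y(\varphi) = \tfrac{1}{4}\int_0^T | \dot\varphi_t + \nabla W_a(\varphi_t) |^2 \dd t,
\]
for absolutely continuous $\varphi$ with $\varphi_0 = y$. The quasi-potential of the vector field $-\nabla W_a$ with respect to $a$ coincides with $W_a - W_a(a)$ by the standard reversibility computation (complete the square in the rate function), and in particular $\inf_{z \in \partial G}\{W_a(z)-W_a(a)\} = H_G$. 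All further arguments are then the textbook Freidlin--Wentzell machinery, specialised to a stable domain $G$ with a single attractor $a$.

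The upper bound $\TrProb_y(\tau^{Y,\sigma}_G > \exp(2(H_G+\delta)/\sigma^2))\to 0$ proceeds by constructing, for any $\eta>0$, an explicit control path $\varphi^\ast\in\mathcal{C}([0,T^\ast];\R^d)$ that starts near $a$, exits $G$, and has cost $I_{T^\ast}(\varphi^\ast) < H_G + \eta$. One first brings the process into a small ball $B_\rho(a)$ in a bounded deterministic time (this uses Assumption~\ref{assu:stable}: every point of $\bar G$ is attracted to $a$ under the deterministic flow), then on each time window of length $T^\ast$ one applies the LDP lower bound on a tube around $\varphi^\ast$. Chopping $[0,\exp(2(H_G+\delta)/\sigma^2)]$ into such windows and using the strong Markov property turns independent-trial reasoning into a geometric estimate: the number of trials is exponentially larger than the inverse of the per-trial exit probability, so the probability of never exiting decays to zero. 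Uniformity in $y\in K$ is obtained since the time to reach $B_\rho(a)$ from points of the compact $K$ is uniformly bounded by continuity of the flow.

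The lower bound $\TrProb_y(\tau^{Y,\sigma}_G < \exp(2(H_G-\delta)/\sigma^2))\to 0$ is the Freidlin--Wentzell $\inf-\sup$ argument. Fix $\delta' \in (0,\delta)$ and pick $T_0$ such that, by the LDP upper bound, for every starting point $z$ in a slight enlargement $G'\supset \bar G$
\[
    \TrProb_z\bigl(\tau^{Y,\sigma}_G \le T_0\bigr) \le \exp\!\bigl(-\tfrac{2}{\sigma^2}(H_G - \delta')\bigr)
\]
for all $\sigma$ small. This step relies on the compactness of the level sets $\{I_{T_0}\le H_G-\delta'\}$ together with the fact that any such path stays inside a compact set on which one can apply the contraction-type LDP uniformly in initial condition; this is the central technical ingredient of Theorem~5.7.11 in \cite{DZ}. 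One then partitions $[0,\exp(2(H_G-\delta)/\sigma^2)]$ into $\lceil\exp(2(H_G-\delta)/\sigma^2)/T_0\rceil$ sub-intervals, applies the strong Markov property and a union bound to obtain an overall bound proportional to $\exp(-2(\delta-\delta')/\sigma^2)\to 0$.

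The main obstacle, and the reason the statement is quoted as a black box from Dembo--Zeitouni, is securing the uniformity in the initial condition for both bounds: one needs the LDP estimates to hold uniformly over a compact set of starting points and, crucially, uniformly in all points the process may visit before exit. This requires either the standard compactness argument on level sets of the rate function (which uses that $W_a\in\cCc^2$ and that $G$ is bounded in the Freidlin--Wentzell setting) or an a priori tightness estimate ruling out excursions to infinity during the windows of length $T_0$; under Assumption~\ref{assu:pot:V}, item $(V\text{-}2)$ gives the required confinement. Once these uniform estimates are in place, combining the two one-sided bounds yields the convergence to $1$ of the probability in the statement, uniformly over $y\in K$.
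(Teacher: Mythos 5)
The paper does not prove this proposition: it is quoted as Theorem~5.7.11 of Dembo and Zeitouni \cite{DZ} and used as a black box, exactly as you observe. There is therefore no in-paper proof to compare against. Your sketch is a faithful outline of the cited textbook argument: the sample-path LDP for the linear diffusion $Y^{y,\sigma}$; identification of the quasi-potential with $W_a - W_a(a)$ by completing the square in the reversible rate function $\frac{1}{4}\int |\dot\varphi_t + \nabla W_a(\varphi_t)|^2\,\dd t$; the control-path construction plus time-chopping and strong-Markov geometric argument for the exit upper bound; the time-partitioning and union bound for the lower bound; and the compactness/confinement considerations that secure uniformity over the compact $K$ and over the points visited before exit (this is indeed the content of the auxiliary results surrounding Theorem~5.7.11, notably Lemma~5.7.19, which the paper also invokes elsewhere). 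One minor imprecision: for the lower bound, the uniform per-window escape estimate is more naturally taken over a small ball around $a$ (the return set of the embedded Markov chain of excursions) rather than over an enlargement $G' \supset \bar G$; but this is a presentational point and does not affect the validity of the sketch.
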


Obviously, this theorem also holds when $G$ is the domain $\cDc^\mathsf{e}_\kappa$ defined as in Remark~\ref{rem:D^e_D^c} and $H_G = H^\mathsf{e}_\kappa := \inf_{x \in \partial \cDc^\mathsf{e}_\kappa} \{W_a(x) - W_a(a)\}$ respectively.




Let us now describe how both diffusion processes $X$ (the targeted diffusion) and $Y$ (the auxiliary one) are coupled. We are especially interested in describing the distance between them.

\begin{prop}\label{prop:|X_t-Y_t|}
    Under Assumptions~\ref{assu:pot:V}--\ref{assu:stable} there exists $\eta > 0$ such that for any $\kappa > 0$ small enough, we have
    \begin{equation*}
        \lim_{\sigma \to 0} \Prob( \sup  \big|X_t^\sigma - Y_t^\sigma \big|  > \kappa) = 0,
    \end{equation*}
    where supremum is taken over $t \in \left[T_{\mathsf{st}}^\sigma(\kappa); S_\mathsf{st}^\sigma(\kappa) \wedge \exp{\frac{2(H + \eta)}{\sigma^2}} \right]$.
\end{prop}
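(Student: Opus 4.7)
My plan is to study the coupled difference $Z_t := X_t^\sigma - Y_t^\sigma$ and exploit the fact that both~\eqref{eq:main_SDE} and~\eqref{eq:def:Y} are driven by the same Brownian motion, so that the martingale parts cancel and $Z$ is absolutely continuous on $[T_\mathsf{st}^\sigma(\kappa), \infty)$ with $Z_{T_\mathsf{st}^\sigma(\kappa)} = 0$. Rearranging the drift with $\nabla W_a = \nabla V + \nabla F(\cdot - a)$,
\[
\dot Z_t = -\bigl(\nabla W_a(X_t^\sigma) - \nabla W_a(Y_t^\sigma)\bigr) - R_t,
\quad
R_t := \int_{\R^d}\bigl(\nabla F(X_t^\sigma - y) - \nabla F(X_t^\sigma - a)\bigr)\mu_t^\sigma(\dd y).
\]
The ``law-error'' $R_t$ is controlled via $(F-4)$, Cauchy--Schwarz, and the uniform moment bound~\eqref{bishop}: since $\Wass_2(\mu_t^\sigma, \delta_a) \leq \kappa$ on $[T_\mathsf{st}^\sigma, S_\mathsf{st}^\sigma]$, I obtain $|R_t| \leq C\kappa\bigl(1 + |X_t^\sigma|^{2r-1}\bigr)$ with high probability.

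Next I would introduce the bootstrap stopping time $\tau_* := \inf\{t \geq T_\mathsf{st}^\sigma(\kappa) : |Z_t| > \kappa\}$ and aim to prove $\Prob\bigl(\tau_* < S_\mathsf{st}^\sigma \wedge \e^{2(H+\eta)/\sigma^2}\bigr) \to 0$. On $[T_\mathsf{st}^\sigma, \tau_*)$ both processes are within $\kappa$ of each other and, via~\eqref{bishop} and Markov's inequality, lie in a large compact set with high probability. Differentiating yields
\[
\tfrac{\dd}{\dd t}|Z_t|^2 = -2\langle Z_t, \nabla W_a(X_t^\sigma) - \nabla W_a(Y_t^\sigma)\rangle - 2\langle Z_t, R_t\rangle.
\]
Since $W_a$ is neither globally convex nor uniformly convex on $\cDc$, contractivity of the first term has to be extracted by a time-decomposition argument. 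Applying Freidlin--Wentzell theory (Proposition~\ref{prop:DZ_exit_time}) to the linear diffusion $Y^\sigma$ shows that $Y^\sigma$ spends the overwhelming majority of $[T_\mathsf{st}^\sigma, \tau_*]$ inside the convex neighborhood $B_\rho(a)$ from Definition~\ref{def:Loc_convexity}, with excursions outside being rare and individually polynomially short in $\sigma^{-1}$. During ``good'' phases $Y_t^\sigma \in B_\rho(a)$ (whence $X_t^\sigma \in B_\rho(a)$ too for $\kappa$ small enough), one has $\langle Z_t, \nabla W_a(X_t^\sigma) - \nabla W_a(Y_t^\sigma)\rangle \geq C_W |Z_t|^2$; during short ``bad'' excursions, only the local-Lipschitz bound on $\nabla W_a$ over the containing compact applies, yielding a bounded exponential growth rate but over a short window.

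The main obstacle is closing the Gronwall-type estimate over the exponentially long horizon $\e^{2(H+\eta)/\sigma^2}$. Quantitative control of excursion lengths from $B_\rho(a)$ via Freidlin--Wentzell, combined with Young's inequality to absorb the $O(\kappa)$ law-error contribution, should yield $|Z_t|^2 \leq C\kappa^2$ for a universal $C$; then, since the statement is for any $\kappa$ small enough, rescaling $\kappa$ by $1/\sqrt C$ ensures $|Z_t| \leq \kappa$ on $[T_\mathsf{st}^\sigma, \tau_*)$, contradicting the definition of $\tau_*$ unless $\tau_* \geq S_\mathsf{st}^\sigma \wedge \e^{2(H+\eta)/\sigma^2}$. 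The parameter $\eta > 0$ is chosen once and for all, so that the cumulative Gronwall factor (dominated by the bad-phase growth minus the good-phase contraction) stays negligible relative to $\e^{2\eta/\sigma^2}$ over the whole horizon; the required smallness of $\eta$ comes from the Freidlin--Wentzell quantitative estimates on excursion frequencies and durations of $Y^\sigma$ outside $B_\rho(a)$.
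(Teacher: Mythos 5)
Your proposal is essentially the paper's own strategy: couple $X^\sigma$ and $Y^\sigma$ by the same Brownian motion, split the drift via $\nabla W_a$, control the law-error term $R_t$ by $(F$-$4)$ and the Wasserstein bound, and then decompose time into ``good'' phases where both processes sit in the convex ball $B_\rho(a)$ (giving contraction at rate $C_W$ as in Lemma~\ref{lem:|X-Y|_control_inside}) and ``bad'' excursions where only local-Lipschitz growth applies (Lemma~\ref{lem:|X-Y|_control_outside}), with Freidlin--Wentzell estimates on $Y^\sigma$ governing the excursion counts and durations (Lemma~\ref{lem:N_star_and_T_1}), and finally a Gronwall-type composition argument closed over the horizon $\e^{2(H+\eta)/\sigma^2}$. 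Two small imprecisions: the excursion durations are controlled by a $\sigma$-independent constant $T_1$ via \cite[Lemma 5.7.19]{DZ} rather than being ``polynomially short in $\sigma^{-1}$'', and the quantitative heart of the closing step --- that the number of excursions $N^*$ is singly exponential in $\sigma^{-2}$ while the contraction per good phase is doubly exponential (since the time inside $B_\rho(a)$ between excursions is of order $\e^{2Q^{\mathsf c}/\sigma^2}$), so the contraction truly overwhelms the accumulated growth --- is left implicit in your sketch, though it is exactly what the paper's explicit bookkeeping with the stopping times $\theta_m,\tau_m$ and the $\Psi_t$-composition formalizes.
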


As it is shown below (Corollary~\ref{cor:|X_t-Y_t|}), this result can be improved by removing the time $S_\mathsf{st}^\sigma(\kappa)$, since, as it turns out, the destabilization of the law of the process can not happen before its exit-time from the domain $\cDc$. 

The following lemma is an important result stating that, at each point of time, the diffusion $Y^\sigma$ is close to $a$ with high probability.

\begin{lem}\label{lem:Y_t_notin_B_rho}
     Let $\rho$ be a positive constant introduced in Definition~\ref{def:Loc_convexity}. Under Assumptions~\ref{assu:pot:V}--\ref{assu:stable} there exists $\eta > 0$ small enough such that for any $\kappa > 0$ small enough:
     \begin{equation*}
         \sup \Prob \! \left(Y_t^\sigma \notin B_{\rho/2}(a) \right) = o_\sigma(1),
     \end{equation*}
    where supremum is taken over $t \in \left[T_{\mathsf{st}}^\sigma(\kappa); \exp{\frac{2(H + \eta)}{\sigma^2}} \right]$.     
\end{lem}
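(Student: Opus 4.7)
The plan combines the Freidlin--Wentzell exit-time estimate of Proposition~\ref{prop:DZ_exit_time} with the strong Markov property of $Y^\sigma$, exploiting that the deterministic flow $-\nabla W_a$ brings any point of a stable enlargement of $\cDc$ back into a small neighbourhood of $a$ within a bounded time. First, I fix a slight enlargement $\widetilde{\cDc}\supsetneq\cDc$ that still satisfies Assumptions~\ref{assu:firsttraj} and \ref{assu:stable} and whose exit cost $\widetilde H:=\inf_{z\in\partial\widetilde{\cDc}}(W_a(z)-W_a(a))$ is strictly larger than $H$. Such a $\widetilde{\cDc}$ exists because Assumption~\ref{assu:stable} forces $\partial\cDc$ to sit strictly inside the basin of attraction of $a$ (in particular no critical point of $W_a$ lies on $\partial\cDc$), so pushing the boundary outward towards the surrounding saddles strictly raises the infimum of $W_a$ on the boundary. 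I then choose $\eta>0$ small enough that $H+2\eta<\widetilde H$.

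Second, using compactness of $\overline{\widetilde{\cDc}}$, Assumption~\ref{assu:stable} applied to $\widetilde{\cDc}$, and the strict convexity of $W_a$ on $B_\rho(a)$ from Definition~\ref{def:Loc_convexity}, there exists a deterministic $T_0>0$ such that $\psi_s(y)\in B_{\rho/4}(a)$ for every $y\in\overline{\widetilde{\cDc}}$ and every $s\ge T_0$. A standard Gronwall--Doob comparison between $(Y^{y,\sigma}_s)_{0\le s\le T_0}$ and $(\psi_s(y))_{0\le s\le T_0}$ on the fixed interval $[0,T_0]$ then yields $\sup_{y\in\overline{\widetilde{\cDc}}}\TrProb_y\bigl(Y^{y,\sigma}_{T_0}\notin B_{\rho/2}(a)\bigr)=o_\sigma(1)$. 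Furthermore, combining Lemma~\ref{lem:stab_in_finite_time} with Proposition~\ref{prop:|X_t-Y_t|} at the deterministic time $\overline{T}_{\!\mathsf{st}}(\kappa)$ gives $Y^\sigma_{\overline{T}_{\!\mathsf{st}}(\kappa)}\in\overline{B_{\rho/4}(a)}$ with probability $1-o_\sigma(1)$, provided $\kappa$ is taken small enough that $2\kappa<\rho/4$.

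I then assemble these ingredients using the strong Markov property. For $t\in[\overline{T}_{\!\mathsf{st}}(\kappa)+T_0,\,\exp(2(H+\eta)/\sigma^2)]$ I split
\[
\Prob\bigl(Y^\sigma_t\notin B_{\rho/2}(a)\bigr)\le\Prob\bigl(\tau^{Y,\sigma}_{\widetilde{\cDc}}\le t-T_0\bigr)+\E\!\left[\TrProb_{Y^\sigma_{t-T_0}}\!\bigl(Y^{\cdot,\sigma}_{T_0}\notin B_{\rho/2}(a)\bigr)\1_{\{\tau^{Y,\sigma}_{\widetilde{\cDc}}>t-T_0\}}\right],
\]
where $\tau^{Y,\sigma}_{\widetilde{\cDc}}$ is the first exit time of $Y^\sigma$ from $\widetilde{\cDc}$. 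The first term is $o_\sigma(1)$ by Proposition~\ref{prop:DZ_exit_time} applied to $\widetilde{\cDc}$ (since $H+\eta<\widetilde H$ and the starting point lies in the compact set $\overline{B_{\rho/4}(a)}\subset\widetilde{\cDc}$ with high probability by the previous paragraph). The second term is $o_\sigma(1)$ by the strong Markov property at time $t-T_0$, since on $\{\tau^{Y,\sigma}_{\widetilde{\cDc}}>t-T_0\}$ the random point $Y^\sigma_{t-T_0}$ lies in $\widetilde{\cDc}$ and the second paragraph's estimate is uniform over such initial conditions. The short transient window $t\in[T^{\sigma}_{\mathsf{st}}(\kappa),\overline{T}_{\!\mathsf{st}}(\kappa)+T_0]$ has deterministic bounded length and is handled by the same Gronwall--Doob comparison used in the second paragraph.

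The subtlest point is Step~1: producing an enlargement $\widetilde{\cDc}$ with strictly larger exit cost relies critically on Assumption~\ref{assu:stable} ruling out critical points of $W_a$ on $\partial\cDc$, so that $W_a$ strictly increases in the outward-normal direction. A secondary delicacy is that the initial concentration of $Y^\sigma$ must be controlled as $o_\sigma(1)$ rather than as the coarser $O(\kappa^2)$ one would obtain by applying Markov's inequality directly to $\Wass_2(\mu^\sigma_{T^{\sigma}_{\mathsf{st}}(\kappa)};\delta_a)\le\kappa$; this is precisely why the argument anchors itself at $\overline{T}_{\!\mathsf{st}}(\kappa)$ via Lemma~\ref{lem:stab_in_finite_time} and Proposition~\ref{prop:|X_t-Y_t|}, rather than at the random-looking time $T^{\sigma}_{\mathsf{st}}(\kappa)$.
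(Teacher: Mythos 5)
Your proof follows essentially the same strategy as the paper's: establish a uniform relaxation time ($T_0$ for you, $\overline{T}$ in the paper) over a slightly enlarged domain so that $\sup_y \TrProb_y(Y^{y,\sigma}_{T_0}\notin B_{\rho/2}(a))=o_\sigma(1)$, control the exit of $Y^\sigma$ from that enlargement via Proposition~\ref{prop:DZ_exit_time}, and then propagate the near-$a$ confinement forward in time by the Markov property. Where you differ is the form of the Markov decomposition: the paper iterates along the deterministic grid $T_{\mathsf{st}}^\sigma(\kappa)+n\overline{T}$ for $n\le N(\sigma)=\lfloor \e^{2(H+\eta)/\sigma^2}/\overline{T}\rfloor$, accumulating a geometric series in $\overline{o}_\kappa(\sigma)$ and then filling in between grid points; you instead apply a single Markov step at $t-T_0$ for each $t$ separately, which avoids the series and is arguably cleaner. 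Both routes are equally valid and buy you the same thing. Two items in your write-up deserve a caveat, however. First, you use Proposition~\ref{prop:|X_t-Y_t|} at the deterministic time $\overline{T}_{\!\mathsf{st}}(\kappa)$ to obtain the initial concentration of $Y^\sigma$; but the supremum in Proposition~\ref{prop:|X_t-Y_t|} is taken only up to $S_\mathsf{st}^\sigma(\kappa)\wedge\e^{2(H+\eta)/\sigma^2}$, and the fact that $S_\mathsf{st}^\sigma(\kappa)\ge\overline{T}_{\!\mathsf{st}}(\kappa)$ is only established by Lemma~\ref{lem:S_k_control}, whose proof itself invokes Lemma~\ref{lem:Y_t_notin_B_rho} — a latent circularity. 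The paper sidesteps this by appealing to Lemma~\ref{lem:stab_in_finite_time} and the coincidence $Y^\sigma_{T_\mathsf{st}^\sigma(\kappa)}=X^\sigma_{T_\mathsf{st}^\sigma(\kappa)}$; a robust repair for either version is to run the LDP (Proposition~\ref{cor:ldp:init}) on the bounded time interval $[0,\overline{T}_{\!\mathsf{st}}(\kappa)]$, which places $X^\sigma$ uniformly close to the deterministic curve $\gamma$ with probability $1-o_\sigma(1)$, and hence places $Y^\sigma$'s starting value in a fixed compact subset of $\widetilde{\cDc}$. Second, the step asserting uniformity of the relaxation time $T_0$ over $\overline{\widetilde{\cDc}}$ (your "every $y\in\overline{\widetilde{\cDc}}$") needs not just Assumption~\ref{assu:stable} for boundary points but that every interior point is attracted to $a$; this is implicit in the Freidlin--Wentzell framework and is exactly what the paper delegates to \cite[Proof of Lemma 5.7.19]{DZ}, so it is worth making the same citation explicit rather than re-deriving it by compactness alone.
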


Note, that the position of supremum in Lemma~\ref{lem:Y_t_notin_B_rho} is important. Indeed, according to the Freidlin-Wentzell theory for It\^{o} diffusions, the exit-time of $Y^\sigma$ from $B_{\rho/2}(a)$ is, with high probability, of order $\exp{2 H_{\rho/2}/\sigma^2}$, where $H_{\rho/2} := \inf_{z \in \partial B_{\rho/2}(a)} \{V(z) + F(z - a) - V(a)\}$, which means, among other things, that we can not expect $\Prob \left(\sup |Y_t^\sigma - a| > \frac{\rho}{2} \right)$ to be equal to $o_\sigma(1)$. Instead, what Lemma~\ref{lem:Y_t_notin_B_rho} states is that for all $t$ before the exit of $Y^\sigma$ from a small enlargement $\cDc^\mathsf{e}_\kappa$, the probability that $Y^\sigma$ is not close to $a$ tends to 0. We come back to this description in Section \ref{s:proof_control_of_Y_sigma}.

\subsection{Control of the law}
In this section we present a result regarding the control of the law of the process after the stabilisation time. Consider the following lemma.



\begin{lem}\label{lem:S_k_control}
    Under Assumptions \ref{assu:pot:V}--\ref{assu:stable} there exists $\eta > 0$ such that for any $\kappa > 0$ small enough there exists $\sigma_\kappa$ such that for any $0 < \sigma < \sigma_\kappa$ we have
    \begin{equation*}
        S_{\mathsf{st}}^\sigma(\kappa) > \exp{\frac{2(H + \eta)}{\sigma^2}}.
    \end{equation*}
\end{lem}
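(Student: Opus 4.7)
The proof is by contradiction. Fix $\eta > 0$ as provided by Proposition~\ref{prop:|X_t-Y_t|} and Lemma~\ref{lem:Y_t_notin_B_rho}, and suppose that along some sequence $\sigma_n\downarrow 0$ we have $S_n:=S_{\mathsf{st}}^{\sigma_n}(\kappa)\leq T_n:=\exp\!\bigl(2(H+\eta)/\sigma_n^2\bigr)$. The map $t\mapsto\Wass_2^2(\mu_t^\sigma,\delta_a)=\E[|X_t^\sigma-a|^2]$ is continuous in $t$ (path continuity of $X^\sigma$ combined with the uniform integrability provided by the $L^{8r^2}$ bound of Proposition~\ref{prop:existence}), so at the first destabilization time one has the exact equality $\E[|X_{S_n}^{\sigma_n}-a|^2]=\kappa^2$. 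The aim is to derive $\E[|X_{S_n}^{\sigma_n}-a|^2]<\kappa^2$ for $n$ large, giving the contradiction.

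The central device is the $L^2$ triangle inequality along the synchronous coupling $(X^{\sigma_n},Y^{\sigma_n})$:
\begin{equation*}
    \sqrt{\E[|X_{S_n}^{\sigma_n}-a|^2]}\leq \sqrt{\E[|X_{S_n}^{\sigma_n}-Y_{S_n}^{\sigma_n}|^2]}+\sqrt{\E[|Y_{S_n}^{\sigma_n}-a|^2]}.
\end{equation*}
For the coupling term, since $X^{\sigma_n}$ and $Y^{\sigma_n}$ coincide at $T_{\mathsf{st}}^{\sigma_n}$ and share the same Brownian motion, I would apply It\^o to $|X_t^{\sigma_n}-Y_t^{\sigma_n}|^2$, exploit the local convexity of $W_a$ on $B_\rho(a)$ (Definition~\ref{def:Loc_convexity}), bound the drift mismatch $\nabla F\ast\mu_t^{\sigma_n}(x)-\nabla F(x-a)$ by $L\,\Wass_2(\mu_t^{\sigma_n},\delta_a)\leq L\kappa$ via (F-4) and Cauchy--Schwarz, and estimate the event $\{X_t^{\sigma_n}\text{ or }Y_t^{\sigma_n}\notin B_\rho(a)\}$ through Proposition~\ref{prop:|X_t-Y_t|}, Lemma~\ref{lem:Y_t_notin_B_rho} and the moment bounds (which are also available for $Y^{\sigma_n}$ by the same techniques as in Proposition~\ref{prop:existence}, thanks to (V-3), (F-3) and (F-5)). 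Gronwall starting from zero initial condition yields $\E[|X_{S_n}^{\sigma_n}-Y_{S_n}^{\sigma_n}|^2]\leq c_1\kappa^2+o_\sigma(1)$. For the $Y$-term, It\^o on $|Y_t^{\sigma_n}-a|^2$ combined with local convexity and Lemma~\ref{lem:Y_t_notin_B_rho} gives $\frac{d}{dt}\E[|Y_t^{\sigma_n}-a|^2]\leq -2C_W\E[|Y_t^{\sigma_n}-a|^2]+o_\sigma(1)+d\sigma_n^2$, and Gronwall from $\E[|Y_{T_{\mathsf{st}}^{\sigma_n}}^{\sigma_n}-a|^2]=\E[|X_{T_{\mathsf{st}}^{\sigma_n}}^{\sigma_n}-a|^2]\leq\kappa^2$ produces $\E[|Y_t^{\sigma_n}-a|^2]\leq\kappa^2\, e^{-2C_W(t-T_{\mathsf{st}}^{\sigma_n})}+o_\sigma(1)$.

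The delicate step, and the one I expect to be the main obstacle, is merging these two bounds into a strict inequality $<\kappa^2$, since each of them is only $\leq\kappa^2+o_\sigma(1)$ on its own. I would handle this via a dichotomy on the elapsed time $S_n-T_{\mathsf{st}}^{\sigma_n}$. If $S_n-T_{\mathsf{st}}^{\sigma_n}\geq T_1$ for a fixed $T_1$ chosen with $e^{-2C_W T_1}$ small, the $Y$-contribution is $o_\sigma(1)$ and the triangle inequality gives $\sqrt{\E[|X_{S_n}^{\sigma_n}-a|^2]}\leq \sqrt{c_1}\,\kappa+o_\sigma(1)$, which is strictly below $\kappa$ once $c_1<1$; the latter is achieved by shrinking the convexity radius $\rho$ in Definition~\ref{def:Loc_convexity}, so that (by continuity of $\nabla^2 F$ and $\nabla^2 V$ at $0$ and $a$ respectively) the ratio of the local Lipschitz constant of $\nabla F$ on $B_\rho(a)$ to $\lambda_{\min}(\nabla^2 W_a(a))$ approaches a finite limit, further discounted by the vanishing probability of leaving $B_\rho(a)$ from Lemma~\ref{lem:Y_t_notin_B_rho}. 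If instead $S_n-T_{\mathsf{st}}^{\sigma_n}<T_1$, the same coupling analysis run on the shorter interval shows that $\Wass_2^2(\mu_t^{\sigma_n},\delta_a)$ cannot grow by more than $o_\sigma(1)+O(\kappa^2 T_1)\cdot o_\rho(1)$ above its value at $T_{\mathsf{st}}^{\sigma_n}$, which is $\leq\kappa^2$, contradicting equality with $\kappa^2$ at $S_n$ for small enough $\rho$ and $\sigma_n$.
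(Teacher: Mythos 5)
Your decomposition via the $L^2$ triangle inequality through the synchronously coupled diffusion $Y^\sigma$ is a genuinely different route from the paper, but it has a gap that I do not think can be repaired by shrinking $\rho$. The constant $c_1$ you obtain for the coupling term satisfies $c_1 \sim (C/C_W)^2$, where $C$ comes from the Wasserstein--Lipschitz estimate
\begin{equation*}
\bigl|\nabla F\ast\mu_t^\sigma(X_t^\sigma) - \nabla F(X_t^\sigma - a)\bigr|
\;\le\; C'\!\int_{\R^d}\!|z-a|\bigl(1+|X_t^\sigma-z|^{2r-1}+|X_t^\sigma-a|^{2r-1}\bigr)\mu_t^\sigma(\dd z)
\;\le\; C\,\Wass_2(\mu_t^\sigma,\delta_a).
\end{equation*}
Here $C$ is driven by the polynomial moments of $\mu_t^\sigma$ (uniformly bounded by Proposition~\ref{prop:existence}, but not small), not by the Hessian of $F$ near $0$. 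Shrinking $\rho$ in Definition~\ref{def:Loc_convexity} improves $C_W$ only towards $\lambda_{\min}(\nabla^2 W_a(a))$ and does not touch $C$ at all, so the ratio $C/C_W$ cannot be driven below $1$; your identification of the relevant quantity with ``the local Lipschitz constant of $\nabla F$ on $B_\rho(a)$'' is a misreading of what the convolution term actually measures. This sinks Case~1, and Case~2 is inconclusive as written: a bound of the form $\Wass_2^2(\mu_{S_n}^{\sigma_n},\delta_a)\le\kappa^2+o_\sigma(1)$ is compatible with equality to $\kappa^2$ and provides no contradiction.

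The paper's proof avoids this trap entirely. Lemma~\ref{lem:trick} establishes the differential inequality $\xi'(t)\le -K_1\xi(t)+d\sigma^2+K_2\sqrt{\Prob(X_t^\sigma\notin B_\rho(a))}$ for $\xi(t)=\Wass_2^2(\mu_t^\sigma,\delta_a)$ \emph{directly}, without passing through the coupled $Y^\sigma$ in the Wasserstein bound. The key ingredients are: an \emph{independent} copy $Y$ of $X_t^\sigma$ (a different object from the synchronously coupled auxiliary diffusion $Y^\sigma$), the modified potential $\tilde F=F+\tfrac12\langle\,\cdot\,;\cMc\,\cdot\,\rangle$ with $\cMc:=-\nabla^2 F(0)+\tfrac{C_W}{2}\mathrm{Id}$, and the symmetrization identity $\E[\langle X-a;\nabla\tilde F(X-Y)\rangle]=\tfrac12\E[\langle X-Y;\nabla\tilde F(X-Y)\rangle]\ge 0$ on the set where both copies are near $a$. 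The $\cMc$ correction is then cancelled against part of the local convexity of $\nabla V$, so the interaction contributes \emph{nonnegatively} to the decay rate, with no lossy ratio. The synchronous coupling and Lemmas~\ref{lem:Y_t_notin_B_rho} and~\ref{prop:|X_t-Y_t|} are used only to show $\Prob(X_t^\sigma\notin B_\rho(a))=o_\sigma(1)$ on the relevant time window, after which a Gronwall argument from $\xi(T_{\mathsf{st}}^\sigma(\kappa))\le\kappa^2$ shows $\xi$ stays below $\kappa^2$ and the contradiction with the definition of $S_{\mathsf{st}}^\sigma(\kappa)$ is immediate. You should replace the triangle-inequality step by the $\tilde F$-symmetrization mechanism of Lemma~\ref{lem:trick}; without it, the strict inequality at the destabilization time is out of reach.
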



This lemma together with Proposition~\ref{prop:|X_t-Y_t|} immediately gives us the following corollary:

\begin{cor}\label{cor:|X_t-Y_t|}
    Under Assumptions~\ref{assu:pot:V}--\ref{assu:stable} there exists $\eta > 0$ such that for any $\kappa > 0$ small enough, we have
    \begin{equation*}
        \lim_{\sigma \to 0} \Prob( \sup  \big|X_t^\sigma - Y_t^\sigma \big|  > \kappa) = 0,
    \end{equation*}
    where supremum is taken over $t \in \left[T_{\mathsf{st}}^\sigma(\kappa); \exp{\frac{2(H + \eta)}{\sigma^2}} \right]$.    
\end{cor}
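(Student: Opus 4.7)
The plan is to observe that this corollary is essentially a bookkeeping combination of the two immediately preceding results, exploiting the fact that $T_{\mathsf{st}}^\sigma(\kappa)$ and $S_{\mathsf{st}}^\sigma(\kappa)$ are deterministic (they are defined purely in terms of the deterministic curve $t\mapsto\Wass_2(\mu_t^\sigma;\delta_a)$). Consequently, the inequality supplied by Lemma~\ref{lem:S_k_control} is a genuine numerical inequality, not an event of high probability, which is what makes the reduction trivial.

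First I would let $\eta_1>0$ denote the constant furnished by Proposition~\ref{prop:|X_t-Y_t|} and $\eta_2>0$ the constant furnished by Lemma~\ref{lem:S_k_control}. Both statements are monotone in $\eta$: Proposition~\ref{prop:|X_t-Y_t|} becomes only stronger when $\eta$ is decreased (the supremum is then taken over a shorter time interval), and Lemma~\ref{lem:S_k_control} likewise becomes easier since its right-hand side shrinks. I can therefore replace both by the common constant $\eta:=\min(\eta_1,\eta_2)>0$, for which both conclusions remain valid.

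Next, fix $\kappa>0$ small enough for both Proposition~\ref{prop:|X_t-Y_t|} and Lemma~\ref{lem:S_k_control} to apply. Lemma~\ref{lem:S_k_control} provides a threshold $\sigma_\kappa>0$ such that for every $0<\sigma<\sigma_\kappa$ one has
\begin{equation*}
    S_{\mathsf{st}}^\sigma(\kappa) \;>\; \exp{\frac{2(H+\eta)}{\sigma^2}},
\end{equation*}
so that in particular $S_{\mathsf{st}}^\sigma(\kappa)\wedge \exp{\tfrac{2(H+\eta)}{\sigma^2}} = \exp{\tfrac{2(H+\eta)}{\sigma^2}}$. The time window of the supremum appearing in Proposition~\ref{prop:|X_t-Y_t|} then coincides, for all $\sigma<\sigma_\kappa$, with the window $\bigl[T_{\mathsf{st}}^\sigma(\kappa);\exp{\tfrac{2(H+\eta)}{\sigma^2}}\bigr]$ appearing in the corollary. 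Applying the limit statement of Proposition~\ref{prop:|X_t-Y_t|} along the subsequence $\sigma\to 0$ (which is eventually contained in $(0,\sigma_\kappa)$) then yields exactly the desired conclusion.

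There is no real obstacle in this proof; the only minor point to verify is the compatibility of the two constants $\eta_1,\eta_2$, which the monotonicity argument above handles. All the substantive work has already been done in establishing Proposition~\ref{prop:|X_t-Y_t|} and Lemma~\ref{lem:S_k_control}.
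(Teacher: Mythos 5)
Your proof is correct and follows the same route the paper has in mind: the paper merely asserts that the corollary follows ``immediately'' from Lemma~\ref{lem:S_k_control} together with Proposition~\ref{prop:|X_t-Y_t|}, and you supply exactly that deduction, including the two details the paper glosses over (the harmonisation of the two constants $\eta$ by taking the smaller one, which works by the monotonicity you note, and the observation that $S_{\mathsf{st}}^\sigma(\kappa)$ is deterministic, so the inequality from Lemma~\ref{lem:S_k_control} holds surely and the two time windows coincide for $\sigma<\sigma_\kappa$).
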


\section{Proofs of the main results}\label{s:main_res_proof}

Here, we give the proofs of the main results.

\subsection{Exit-time and exit-location}

\noindent{}{\bf Step 1.} To prove the lower bound of Kramers' law, consider the following inequality. For any $\delta > 0$ and for fixed $\kappa > 0$ small enough we have
\begin{equation}\label{eq:aux:tau^Y<exp}
\begin{aligned}
    \Prob &\left(\tau_{\cDc}^\sigma < \exp{\frac{2(H - \delta)}{\sigma^2}} \right) \leq \Prob(\tau_{\cDc}^\sigma < T_{\mathsf{st}}^\sigma(\kappa)) \\
    & \quad +  \Prob \Big(\tau_{\cDc}^\sigma < \exp\Big\{\frac{2(H - \delta)}{\sigma^2}\Big\}, \sup_{t \in [T_{\mathsf{st}}^\sigma(\kappa); \e^{\frac{2 H}{\sigma^2}}]}|X_t^\sigma - Y_t^\sigma| \leq \kappa \Big) \\
    & \quad +  \Prob \Big( \sup_{t \in [T_{\mathsf{st}}^\sigma(\kappa); \e^{\frac{2 H}{\sigma^2}}]}|X_t^\sigma - Y_t^\sigma| > \kappa \Big).
\end{aligned}
\end{equation}
By the construction of the domain $\cDc_\kappa^\mathsf{c}$ (see Remark \ref{rem:D^e_D^c}), $d(\cDc_\kappa^\mathsf{c}, \partial\cDc) \geq \kappa$. Let us define $\delta_\kappa := H - H_\kappa^\mathsf{c}$. Note that $H_\kappa^\mathsf{c} \xrightarrow[\kappa \to 0]{} H$ due to the continuiuty of the effective potential $W_a$. Therefore, we can choose $\kappa$ to be small enough such that $\delta_\kappa < \delta$. Then the following inequality holds:
\begin{equation*}
\begin{aligned}
    \Prob &\left(\tau_{\cDc}^\sigma < \exp{\frac{2(H - \delta)}{\sigma^2}},\quad \sup_{}|X_t^\sigma - Y_t^\sigma| \leq \kappa \right) \\
    &\leq \Prob\Big(\tau^{Y, \sigma}_{\cDc_\kappa^\mathsf{c}} >  \exp\Big\{\frac{2(H - \delta)}{\sigma^2}\Big\} = \exp\Big\{\frac{2(H_\kappa^\mathsf{c} + \delta_\kappa- \delta)}{\sigma^2}\Big\} \Big)\\
    &\leq \Prob(|X_{T_{\mathsf{st}}^\sigma(\kappa)}^\sigma - a| > \kappa) + \sup_{y \in B_\kappa(a)}  \!\!\TrProb_y \left(\tau^{Y, \sigma}_{\cDc_\kappa^\mathsf{c}} >  \exp{\frac{2(H_\kappa^\mathsf{c} + \delta_\kappa- \delta)}{\sigma^2}} \right) \xrightarrow[\sigma \to 0]{} 0,    
\end{aligned}
\end{equation*}
where the convergence to $0$ is due to Lemma~\ref{lem:stab_in_finite_time} and Proposition~\ref{prop:DZ_exit_time}, since $\delta_\kappa - \delta < 0$. 

The other probabilities in \eqref{eq:aux:tau^Y<exp} converge to $0$ by Corollaries~\ref{cor:stab_in_finite_time2} and \ref{cor:|X_t-Y_t|}.

\noindent{}{\bf Step 2.} To prove the upper bound of Kramers' law, consider the set $\cDc^\mathsf{e}_\kappa$ (see Remark \ref{rem:D^e_D^c}): enlargement of $\cDc$ for small enough $\kappa > 0$. Let $\eta > 0$ be the positive constant defined in Corollary~\ref{cor:|X_t-Y_t|}. Without loss of generality, let us fix positive $\delta < \eta$. Consider the following inequalities.
\begin{equation}\label{eq:aux:tau^Y>exp}
    \begin{aligned}
        \Prob&\left(\tau_\cDc^\sigma > \exp{\frac{2(H + \delta)}{\sigma^2}} \right) \leq \Prob(\tau_\cDc^\sigma < T_{\mathsf{st}}^\sigma(\kappa)) \\
        & \quad + \Prob\Big( \tau_\cDc^\sigma > \exp\Big\{\frac{2(H + \delta)}{\sigma^2}\Big\}, \sup_{t \in [T_{\mathsf{st}}^\sigma(\kappa); \e^{\frac{2(H + \delta)}{\sigma^2}}]} |X_t^\sigma - Y_t^\sigma| \leq \kappa \Big) \\
        & \quad + \Prob \Big( \sup_{t \in [T_{\mathsf{st}}^\sigma(\kappa); \e^{\frac{2(H + \delta)}{\sigma^2}}]} |X_t^\sigma - Y_t^\sigma| > \kappa \Big).
    \end{aligned}
\end{equation}

If $\tau_\cDc^\sigma > \exp{\frac{2(H + \delta)}{\sigma^2}}$ and $\sup \Big\{|X_t^\sigma - Y_t^\sigma|: t \in [T_{\mathsf{st}}^\sigma(\kappa); \e^{\frac{2(H + \delta)}{\sigma^2}}] \Big\} \leq \kappa$, then at the point of time $\e^{\frac{2(H + \delta)}{\sigma^2}}$ the process $Y^\sigma$ is still inside $\cDc_\kappa^\mathsf{e}$. Define $\delta_\kappa := H^\mathsf{e}_\kappa - H$, decrease $\kappa$ if necessary such that $\delta_\kappa < \delta$, and consider
\begin{equation*}
\begin{aligned}
    \Prob &\left(  \tau_\cDc^\sigma > \exp{\frac{2(H + \delta)}{\sigma^2}}, \sup_{t \in [T_{\mathsf{st}}^\sigma(\kappa); \e^{\frac{2(H + \delta)}{\sigma^2}}]} |X_t^\sigma - Y_t^\sigma| \leq \kappa \right)\\   
    & \leq \Prob \Big(\tau_{\cDc_\kappa^\mathsf{e}}^{Y, \sigma} > \exp\Big\{\frac{2(H + \delta)}{\sigma^2}\Big\} = \exp\Big\{\frac{2(H_\kappa - \delta_\kappa + \delta)}{\sigma^2}\Big\} \Big)\\
    &\leq \Prob(|X_{T_{\mathsf{st}}^\sigma(\kappa)}^\sigma - a| > \kappa) + \sup_{y \in B_\kappa(a)}  \!\!\TrProb_y \left(\tau_{\cDc_\kappa^\mathsf{e}}^{Y, \sigma} > \exp{\frac{2(H_\kappa - \delta_\kappa + \delta)}{\sigma^2}} \right) \xrightarrow[\sigma \to 0]{} 0,
\end{aligned}
\end{equation*}
where the convergence to $0$ holds due to Lemma~\ref{lem:stab_in_finite_time} and Proposition~\ref{prop:DZ_exit_time}. We finalise the proof of Kramers' type law by observing that, as in Step 1, all the others probabilities  in \eqref{eq:aux:tau^Y>exp} also tend to $0$ by Corollaries~\ref{cor:stab_in_finite_time2} and \ref{cor:|X_t-Y_t|}. That proves Kramers' type law.

\noindent{}{\bf Step 3.} Let us now show the exit-location result. Fix a set $N \subset \partial \cDc$ such that $\ds \inf_{z \in N} \{W_a(z) - W_a(a)\} > H$. Let us choose $\xi > 0$ to be small enough such that $\ds \xi < \big(\inf_{z \in N} \{W_a(z) - W_a(a)\} - H \big)/2$. Let us define the sublevel set $L^{-}_{H + \xi} := \{x \in \R^d: W_a(x) - W_a(a) \leq H + \xi\}$ (without loss of generality by $L^{-}_{H + \xi}$ we will denote the unique connected component of the sublevel set that contains $a$). By geometric properties of the effective potential (regularity and convergence at infinity for big $|x|$), $L^{-}_{H + \xi}$ satisfies the Assumptions \ref{assu:domain:bounded}--\ref{assu:stable}. Thus, after the initial convergence of $X^\sigma$ to $a$ and its law $\mu_t^\sigma$ to $\delta_a$, the Kramers' type law holds for the exit-time $\tau_{L^{-}_{H + \xi}}^\sigma$, that is, for any $\delta > 0$,
\begin{equation}\label{eq:aux:tau_L-H}
    \lim_{\sigma \to 0}\Prob \left(\e^{\frac{2(H + \xi - \delta)}{\sigma^2}} \leq \tau_{L^{-}_{H + \xi}}^\sigma \leq \e^{\frac{2(H + \xi + \delta)}{\sigma^2}}\right) = 0,
\end{equation}
including for $\delta = \xi/2$. We could easily show geometrically that exiting $\cDc$ in the set $N$ means crossing the boundary $L_{H + \xi} := \partial L^-_{H + \xi}$ before leaving the domain $\cDc$. Therefore, we get the following inequality:
\begin{equation*}
    \Prob (X_{\tau_\cDc^\sigma}^\sigma \in N) \leq \Prob (\tau_\cDc^\sigma \leq T_{\text{st}}^\sigma(\kappa)) + \Prob (\tau_{L^-_{H + \xi}}^\sigma \leq \tau_\cDc^\sigma).
\end{equation*}

The first probability converges to 0 by Corollary~\ref{cor:stab_in_finite_time2}. Let us look at the second probability:
\begin{equation*}
    \Prob (\tau_{L^-_{H+\xi}}^\sigma \leq \tau_\cDc^\sigma) \leq \Prob \left(\tau_\cDc^\sigma \geq \e^{\frac{2(H + \xi/2)}{\sigma^2}} \right) + \Prob \left(\tau_{L^-_{H + \xi}}^\sigma \leq \tau_\cDc^\sigma < \e^{\frac{2(H + \xi/2)}{\sigma^2}}\right) \xrightarrow[\sigma \to 0]{} 0,
\end{equation*}
where th first probability tends to 0 by the Kramers' type law (Step 2) and the second probability tends to 0 by \eqref{eq:aux:tau_L-H} if we take $\delta = \xi/2$.

\subsection{Proof of Corollaries~\ref{cesare3} and \ref{cesare4}} 
We consider an unbounded domain $\cDc$ with finite exit-cost $H>0$. Then, set $L_{H + \xi}^- := \left\{x\in\R^d\,\,:\,\,W_a(x)-W_a(a)\leq H+\xi\right\}$. Let us assume without loss of generality that $x_{\text{init}} \in L_{H + \xi}^-$ (otherwise, the uniform in $\sigma$ convergence in finite time inside $L_{H + \xi}^-$ can be easily proven using LDP, similarly to Lemma~\ref{lem:stab_in_finite_time}).

Let us define $\cDc':=\cDc \bigcap L_{H + \xi}^-.$ Immediately, $\cDc'$ is bounded. Indeed, since $W_a(x)$ tends to infinity as $|x|$ goes to infinity, the level set $L_{H + \xi}^-$ is compact. The domain $\cDc'$ is also stable by $-\nabla W_a$, since both the domains $\cDc$ and $L_{H + \xi}^-$ are stable by definition. Thus, the domain $\cDc'$ satisfies all the assumptions of Theorem~\ref{thm:main_exit_time} with the height of $W_a$ inside $\cDc'$ being equal to $H$. Therefore, for any $\xi > 0$ we have:

\begin{equation*}
    \lim_{\sigma\to0} \Prob\left(\e^{\frac{2}{\sigma^2}(H-\xi)} \leq \tau'(\sigma) \leq \e^{\frac{2}{\sigma^2}(H+\xi)} \right) = 1\,,
\end{equation*}

were, $\tau'(\sigma)$ is the first exit-time of $X^\sigma$ from $\cDc'$. Indeed, the exit-cost is $H$.

Note that, by construction of the domain $\cDc'$, and by continuity of $W_a$, for any $\xi > 0$ we have $$\inf \big\{W_a(z) - W_a(a): z \in \Cl (\partial\cDc' \setminus \partial\cDc) \} > H,$$ 
where $\Cl$ stands for closure. It means that the exit-location result of the main Theorem~\ref{thm:main_exit_time} holds for $N = \Cl(\partial \cDc' \setminus \partial \cDc)$, namely
\begin{equation*}
    \lim_{\sigma \to 0} \Prob\left(X^\sigma_{\tau'(\sigma)} \in \Cl(\partial \cDc' \setminus \partial \cDc)\right) = 0.   
\end{equation*}
That essentially means that
\begin{equation*}
    \lim_{\sigma \to 0}\Prob (\tau'(\sigma) = \tau^\sigma_\cDc) = 1,
\end{equation*}
which proves Corollary~\ref{cesare3}.

The second corollary can be proved the same way by choosing $\xi > 0$ to be small enough such that the set under consideration $N \subset \cDc$ lies entirely beyond the level set $L^-_{H + \xi}$.

\section{Proofs of the intermediate results} \label{s:interm_res_proof}

\subsection{Stabilisation in finite time: Proof of Lemma \ref{lem:stab_in_finite_time} and Corollary \ref{cor:stab_in_finite_time2}}

The proof is based on LDP ideas and the fact that, for small $\sigma$, the process $X^\sigma$ is attracted towards $a$. Fix some $\kappa > 0$. By Assumption~\ref{assu:firsttraj}, the path of the deterministic solution to the following equation
\begin{equation}\label{eq:aux:gamma=-nabla_V(gamma)}
    \frac{\dd}{\dd{t}}\gamma_t = - \nabla V(\gamma_t), \quad \text{with } \gamma_0 = x_{\text{init}}, 
\end{equation}
is contained in $\cDc$, i.e. $\{\gamma_t, t\geq 0\} \subset \cDc$, and tends to $a$. Let us decrease $\kappa > 0$ to be small enough such that the distance between the set $(\gamma_t, t\geq 0)$ and $\partial \cDc$ is strictly greater than $\kappa/3$. Let us define $\overline{T}_{\mathsf{st}}(\kappa)$ as the first time when $\gamma_t \in B_{\kappa/3}(a)$. The following inclusion of events takes place:
\begin{equation*}
    \Prob \left(\Big| X_{\overline{T}_{\mathsf{st}}(\kappa)}^\sigma - a \Big| > \frac{2\kappa}{3} \right) \leq \Prob \left(\Big| X_{\overline{T}_{\mathsf{st}}(\kappa)}^\sigma - \gamma_{\overline{T}_{\mathsf{st}}(\kappa)} \Big| > \frac{\kappa}{3} \right) \leq \Prob (X^\sigma \in \Phi),
\end{equation*}
where $\Phi := \big\{\varphi \in C\left( \left[ 0; \overline{T}_{\mathsf{st}}(\kappa) \right] \right): \|\varphi - \gamma\|_{\infty} \geq \kappa/3 \big\}$. By Proposition~ \ref{cor:ldp:init}, 
\begin{equation}\label{eq:aux:Prob(|X_T-a|)}
    \limsup_{\sigma \to 0} {\frac{\sigma^2}{2} \log \Prob(X^\sigma \in \Phi)} \leq - \inf_{\varphi \in \Phi} I_{\overline{T}_{\mathsf{st}}(\kappa)}(\varphi). 
\end{equation}
Note that, by definition of the rate function $I_T$, and by uniqueness of solution to equation \eqref{eq:aux:gamma=-nabla_V(gamma)}, function $\gamma$ is its only minimizer such that $I_{\overline{T}_{\mathsf{st}}(\kappa)}(\gamma) = 0$. Since $I_T$ is a good rate function, its infima are achieved over closed sets. Note that $\gamma \notin \Phi$, thus $A:= I_{\overline{T}_{\mathsf{st}}(\kappa)}(\varphi) > 0$. That proves the second result of the Lemma~\ref{lem:stab_in_finite_time}, since it guarantees that there exists $\sigma_\kappa > 0$ small enough such that for any $0 < \sigma < \sigma_\kappa$: 
\begin{equation}\label{eq:aux:|X_t-a|>2k/3}
    \Prob \left(\Big| X_{\overline{T}_{\mathsf{st}}(\kappa)}^\sigma - a \Big| > \frac{2\kappa}{3} \right) \leq \e^{-\frac{2 A}{\sigma^2}}.
\end{equation}

For the first statement, consider the following equality:
\begin{equation*}
\begin{aligned}
    \Wass_2^2 \left(\mu_{\overline{T}_{\mathsf{st}}(\kappa)}^\sigma; \delta_a \right)  &= \E \left| X^\sigma_{\overline{T}_{\mathsf{st}}(\kappa)} - a \right|^2 = \E\left[ \left| X^\sigma_{\overline{T}_{\mathsf{st}}(\kappa)} - a \right|^2 \1_{\{X^\sigma_{\overline{T}_{\mathsf{st}}(\kappa)} \in B_{\frac{2\kappa}{3}}(a)\}} \right] \\
    & \quad + \E\left[ \left| X^\sigma_{\overline{T}_{\mathsf{st}}(\kappa)} - a \right|^2 \1_{\{X^\sigma_{\overline{T}_{\mathsf{st}}(\kappa)} \notin B_{\frac{2\kappa}{3}}(a)\}} \right].
\end{aligned}
\end{equation*}
Therefore, by Cauchy–Schwarz inequality, we can bound the difference between the two measures by:
\begin{equation*}
    \Wass_2^2 \left(\mu_{\overline{T}_{\mathsf{st}}(\kappa)}^\sigma; \delta_a \right) \leq \frac{4\kappa^2}{9} + \sqrt{\E \left| X^\sigma_{\overline{T}_{\mathsf{st}}(\kappa)} - a \right|^4} \sqrt{ \Prob \left(\Big| X_{\overline{T}_{\mathsf{st}}(\kappa)}^\sigma - a \Big| > \frac{2\kappa}{3} \right)}.
\end{equation*}

By Proposition \ref{prop:existence}, there exists $ M > 0 $ such that $\sup_{0 < \sigma < 1} \sup_{t \geq 0} \E|X^\sigma_t - a|^2 < M^2$. This estimate along with equation \eqref{eq:aux:|X_t-a|>2k/3} gives us:
\begin{equation*}
     \Wass_2^2 \left(\mu_{\overline{T}_{\mathsf{st}}(\kappa)}^\sigma; \delta_a \right) \leq \frac{4\kappa^2}{9} + M \e^{-A/\sigma^2}.
\end{equation*}
That expression can be bounded by $\kappa^2$ if we choose $\sigma_\kappa > 0$ to be small enough, which proves Lemma \ref{lem:stab_in_finite_time}.

Corollary \ref{cor:stab_in_finite_time2} can be also easily proven by choosing $\kappa$ such that $$\inf_{t \geq 0} \inf_{z \in \partial \cDc}|\gamma_t - z| > \frac{\kappa}{3}.$$ In this case, the following estimate holds:
\begin{equation*}
    \Prob \left(\tau^\sigma_\cDc \leq \overline{T}_{\mathsf{st}}(\kappa) \right) \leq \Prob(X \notin \Phi) \leq \e^{-\frac{2A}{\sigma^2}} \xrightarrow[\sigma \to 0]{} 0.
\end{equation*}

\subsection{The coupling estimate: Proof of Proposition \ref{prop:|X_t-Y_t|}}

In this section we prove Proposition \ref{prop:|X_t-Y_t|}. The idea of the proof is based on the fact that, since the processes $X^\sigma$ and $Y^\sigma$ are coupled by the same Brownian motion and by the properties of convex sets, whenever both $X^\sigma$ and $Y^\sigma$ belong to the set $B_\rho(a)$ (Definition \ref{def:Loc_convexity}), the distance between them decreases a.s. (we show this in Lemma \ref{lem:|X-Y|_control_inside}). At the same time, whenever the two processes belong to the region $\cDc \setminus B_\rho(a)$, their maximum scatter can be controlled in terms of the time spent inside $\cDc \setminus B_\rho(a)$ (Lemma \ref{lem:|X-Y|_control_outside} below). The proof is finished by observing that, before exiting $\cDc$, the processes $X^\sigma$ and $Y^\sigma$ spend inside $B_\rho(a)$ long enough time comparing to the total time spent inside $\cDc \setminus B_\rho(a)$, that the attracting effect surpasses the scattering one.     

Before proving the proposition rigorously, let us present the following notions. Let us decrease without loss of generality $\kappa > 0$ to be smaller than $\rho/4$. Let us also fix some enlargement of the domain $\cDc$ of some radius $R > 0$: $\cDc_{R}^\textsf{e}$ (see Remark \ref{rem:D^e_D^c} for the definition). Decrease $\kappa$, if necessary, so that $\kappa < R/2$. Consider the following sequence of stopping times:
\begin{equation}\label{eq:def_of_tau_theta}
    \begin{aligned}
        \theta_1 &:= \inf\{t \geq T_{\mathsf{st}}^\sigma(\kappa) :\; Y_t^\sigma \notin B_{\rho/2}(a) \},\\
        \tau_{m} &:= \inf\{t\geq \theta_{m} : Y_t^\sigma \in B_{\rho/4}(a) \cup \partial \cDc_R^\mathsf{e} \}, \\
        \theta_{m + 1} &:= \inf\{t \geq \tau_m: Y_t^\sigma \notin B_{\rho/2}(a) \}.
    \end{aligned}
\end{equation}

We also define the following stopping times that will allow us to study the behaviour of $\theta_i$, $\tau_i$ for different $i$ using the strong Markov property of diffusion $Y^\sigma$. For any $y \in \R^d$ consider:
\begin{equation}\label{eq:def_of_tau_0_theta_0}
    \begin{aligned}
        \theta_0 &:= \inf\{t \geq 0 :\; Y_t^{y, \sigma} \notin B_{\rho/2}(a) \},\\
        \tau_0 &:= \inf\{t \geq 0 : Y_t^{y, \sigma} \in B_{\rho/4}(a) \cup \partial \cDc_R^\mathsf{e} \}.
    \end{aligned}
\end{equation}

Consider the following 

\begin{lem}\label{lem:|X-Y|_control_inside}
    Define for some $K > 0$ the following family of mappings $\varphi_T: x \mapsto x\e^{- K T} + o_\kappa(1)$ for any $T > 0$, where $o_\kappa(1) \xrightarrow[\kappa \to 0]{}0$. Then, under Assumptions \ref{assu:pot:V}--\ref{assu:stable}, there exists a constant $K > 0$ such that for any $\alpha < \rho/4$, for any $m \geq 1$, and for any $\kappa > 0$ small enough:
    \begin{equation*}
        \Prob \left(\sup_{t \in [\tau_m; \theta_{m + 1}]} |X_t^\sigma - Y_t^\sigma| > \varphi_{\theta_{m + 1} - \tau_m}(\alpha), A \right) = 0,
    \end{equation*}
    where $A := \{\theta_{m + 1} \leq S_{\mathsf{st}}^\sigma(\kappa), \sup_{t \leq \tau_m} |X_t^\sigma - Y_t^\sigma| \leq \alpha \}$
\end{lem}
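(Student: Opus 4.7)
Since $X^\sigma$ and $Y^\sigma$ are driven by the same Brownian motion, the difference $Z_t := X_t^\sigma - Y_t^\sigma$ has no martingale part. Differentiating $|Z_t|^2$, then adding and subtracting $\nabla F(X_t^\sigma - a)$ inside the interaction term and regrouping via $W_a = V + F(\cdot - a)$, yields
\[
\tfrac{1}{2}\tfrac{d}{dt}|Z_t|^2 = -\langle Z_t,\, \nabla W_a(X_t^\sigma) - \nabla W_a(Y_t^\sigma)\rangle - \langle Z_t,\, R_t\rangle,
\]
where $R_t := \nabla F \ast \mu_t^\sigma(X_t^\sigma) - \nabla F(X_t^\sigma - a)$ measures the error from replacing $\mu_t^\sigma$ by $\delta_a$ in the nonlocal drift. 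The aim is to exploit local convexity of $W_a$ to make the first inner product strictly positive, and control $R_t$ by the Wasserstein distance $\Wass_2(\mu_t^\sigma,\delta_a)$, which is bounded by $\kappa$ on the event $A$.

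The first ingredient is a bootstrap to ensure both processes stay in the convexity zone $B_\rho(a)$ of Definition~\ref{def:Loc_convexity}. By the very construction of $\tau_m$ and $\theta_{m+1}$, $Y_t^\sigma \in \overline{B_{\rho/2}(a)}$ for $t \in [\tau_m,\theta_{m+1}]$. I introduce the auxiliary stopping time $\tau^{\ast} := \inf\{t \geq \tau_m : X_t^\sigma \notin B_\rho(a)\} \wedge \theta_{m+1}$. On the event $A$, since $|Z_{\tau_m}| \leq \alpha < \rho/4$ and $Y_{\tau_m}^\sigma \in B_{\rho/4}(a)$, we have $X_{\tau_m}^\sigma \in B_{\rho/2}(a)$, so $\tau^{\ast} > \tau_m$. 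Throughout $[\tau_m,\tau^{\ast}]$, both processes lie in $B_\rho(a)$, and Definition~\ref{def:Loc_convexity} gives $\langle Z_t, \nabla W_a(X_t^\sigma) - \nabla W_a(Y_t^\sigma)\rangle \geq C_W|Z_t|^2$.

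For the perturbation, write $R_t = \int [\nabla F(X_t^\sigma - y) - \nabla F(X_t^\sigma - a)]\,\mu_t^\sigma(dy)$ and apply the local Lipschitz bound $(F-4)$. Because $X_t^\sigma \in B_\rho(a)$ on $[\tau_m,\tau^{\ast}]$, polynomial factors involving $|X_t^\sigma|$ are bounded; applying Cauchy--Schwarz and the uniform moment bound of Proposition~\ref{prop:existence} to the remaining polynomial in $|y|$ then produces $|R_t| \leq C_R\,\Wass_2(\mu_t^\sigma, \delta_a)$ for a constant $C_R$ independent of $\sigma$. On the event $A$, since $\theta_{m+1} \leq S_{\mathsf{st}}^\sigma(\kappa)$, this in turn gives $|R_t| \leq C_R\kappa$.

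Plugging these two estimates back and dividing by $|Z_t|$ where nonzero yields the differential inequality $\frac{d}{dt}|Z_t| \leq -C_W|Z_t| + C_R\kappa$ on $[\tau_m,\tau^{\ast}]$. Gronwall's lemma therefore produces $|Z_t| \leq \alpha\,e^{-C_W(t-\tau_m)} + C_R\kappa/C_W$, which is exactly a bound of the form $\varphi_{t-\tau_m}(\alpha)$ with $K := C_W$ and $o_\kappa(1) := C_R\kappa/C_W$. For $\kappa$ small enough, this stays strictly below $\rho/4 + o_\kappa(1) < \rho/2$, which combined with $Y_t^\sigma \in \overline{B_{\rho/2}(a)}$ forces $X_t^\sigma \in B_\rho(a)$ on the whole interval, so $\tau^{\ast} = \theta_{m+1}$ and the bootstrap closes; the resulting almost-sure inequality implies the claimed probability equals zero. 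The main obstacle is the careful bookkeeping of the moment estimates needed to make $C_R$ uniform in $\sigma$, and organizing the bootstrap for $\tau^{\ast}$ so that the bound on $|Z_t|$ and the containment $X_t^\sigma \in B_\rho(a)$ are established simultaneously rather than circularly.
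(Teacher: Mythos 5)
Your proposal is correct and follows essentially the same strategy as the paper: couple via the same Brownian motion, regroup the drift into $\nabla W_a$ plus an error term $R_t$, exploit local convexity in $B_\rho(a)$ for the first piece, bound $R_t$ by $C\kappa$ using $(F\text{-}4)$, Cauchy--Schwarz, and the uniform moment bound, apply a Gr\"onwall argument, and close the bootstrap on the stopping time $\tau^*$ (the paper calls it $\mathcal T$) by the same contradiction: the Gr\"onwall bound keeps $|X_t^\sigma - Y_t^\sigma| < \rho/2$, and $Y^\sigma \in \overline{B_{\rho/2}(a)}$ up to $\theta_{m+1}$, so $X^\sigma$ cannot reach $\partial B_\rho(a)$. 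The one cosmetic difference is that the paper works with $\xi(t) = |Z_t|^2$ and compares against an explicit scalar ODE for $\psi$, whereas you divide by $|Z_t|$ to pass directly to a linear inequality for $|Z_t|$; the paper's version sidesteps the (resolvable, but fussy) issue of differentiating $|Z_t|$ at zeros, which is worth keeping in mind when writing this out in full.
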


\begin{proof}

    Let us define random time $\mathcal{T}:= \inf\{t \geq \tau_{m}: X_t^\sigma \notin B_{\rho}(a)\}$ -- first time when $X^\sigma$ leaves the convexity area $B_\rho(a)$. Obviously, for almost every $\omega \in A$, we have $\mathcal{T} > 0$.

    \noindent\noindent{}{\bf Step 1.} Let us define $\xi(t) := |X^\sigma_t - Y^\sigma_t|^2$. The way functions $X^\sigma$ and $Y^\sigma$ are coupled provides us with the fact that $\xi$ is differentiable in the usual sense. Its derivative is equal to:
    \begin{equation*}
    \begin{aligned}
        \xi^\prime (t) &= - 2 \langle X^\sigma_t - Y^\sigma_t; \, \nabla W_a(X^\sigma_t) - \nabla W_a(Y^\sigma_t) \rangle\\
                    & \quad - 2\langle X^\sigma_t - Y^\sigma_t;\, \nabla F \ast \mu_t^\sigma(X^\sigma_t) -  \nabla F \ast \delta_a (X^\sigma_t) \rangle.         
    \end{aligned}
    \end{equation*}
    Since in this lemma we consider only outcomes such that $\Wass_2(\mu_t^\sigma; \delta_a) \leq \kappa$ and $|X^\sigma_{\tau_m} - Y^\sigma_{\tau_m}| \leq \alpha$, i.e. $\omega \in A$, after integrating over the time interval $[\tau_m; \theta_{m + 1} \wedge \mathcal{T}]$ and applying Assumption \ref{assu:pot:VetF2} (see also Definition~\ref{def:Loc_convexity}), we get the following estimate. For any $t > 0$ and for $\Prob$--a.e. $\omega \in A \cap \{t \in [\tau_{m}; \theta_{m + 1} \wedge \mathcal{T}]\}$:
    \begin{equation} \label{eq:aux:xi(t)_bound}
    \begin{aligned}
        \xi(t) & \leq |X^\sigma_{\tau_m} - Y^\sigma_{\tau_m}|^2 - 2\int_{\tau_m}^{t} \langle X^\sigma_s - Y^\sigma_s; \, \nabla W_a(X^\sigma_s) - \nabla W_a(Y^\sigma_s) \rangle \dd{s}\\
        & \quad + 2\int_{\tau_m}^{t}| X^\sigma_s - Y^\sigma_s| |\nabla F \ast \mu_s^\sigma(X^\sigma_s) - \nabla F \ast \delta_a (X^\sigma_s) | \dd{s} \\
        & \leq \alpha^2 - 2 C_{W}\int_{\tau_m}^{t}\xi(s)\dd{s}  + 2 \int_{\tau_m}^{t} \sqrt{\xi(s)} \Big|\nabla F \ast \mu_s^\sigma(X^\sigma_s) - \nabla F \ast \delta_a (X^\sigma_s) \Big| \dd{s}.
    \end{aligned}
    \end{equation}

    Since the term $\Big|\nabla F \ast \mu_s^\sigma(X^\sigma_s) - \nabla F \ast \delta_a (X^\sigma_s) \Big|$ is hard to analyse, we study it separately.

    \noindent\noindent{}{\bf Step 2.} Consider the following inequality. By Assumption (F -- 4) of \ref{assu:pot:F}, we can express:
    \begin{equation*}
    \begin{aligned}
        &\int_{\R^d} \Big| \nabla F (X^\sigma_s - z) - \nabla F (X^\sigma_s - a) \Big| \mu_s^\sigma(\dd{z}) \\
        & \leq C^\prime\int_{\R^d} |z - a| \big( 1 + \left|X_s^\sigma - z\right|^{2 r - 1} + \left|X_s^\sigma - a\right|^{2 r - 1} \big)\mu_s^\sigma(\dd{z}) \\
        &\leq C^\prime\int_{\R^d} |z - a| \big( 1 + 2^{2r - 1} \left|X_s^\sigma\right|^{2 r - 1} + 2^{2r - 2} |z|^{2 r - 1} + 2^{2r - 2} |a|^{2 r - 1} \big)\mu_s^\sigma(\dd{z}).
    \end{aligned}
    \end{equation*}

    In the following, we will denote by $\text{C}$ the generic constant that may depend on $r$, $\rho$ and other parameters defined in assumptions. The bound thus takes the form:    
    \begin{equation*}
        \begin{aligned}
            &\text{C}\int_{\R^d} |z - a| \big(\text{C} + \text{C} \left|X_s^\sigma\right|^{2 r - 1} + |z|^{2r - 1} + |a|^{2r - 1} \big)\mu_s^\sigma(\dd{z}) \\
            &\leq \text{C} \sqrt{\int_{\R^d} |z - a|^2\mu_s^\sigma(\dd{z})} \sqrt{\text{C} + \text{C} \left|X^\sigma_s \right|^{4 r - 2} + |a|^{4r - 2} + \int_{\R^d}|z|^{4r - 2}\mu_s^\sigma(\dd{s})}
        \end{aligned}
    \end{equation*}
    
    Since we only consider $\omega \in A \cap \{t \in [\tau_{m}; \theta_{m + 1} \wedge \mathcal{T}]\}$, $X^\sigma$ belongs to $B_{\rho}(a)$ and is thus bounded by a constant.  Moreover, $\Wass_2(\mu_s^\sigma; \delta_a) \leq \kappa$ and $|Y_t^\sigma| \leq \sup_{z \in \partial \cDc_R^\mathsf{e}} |z - a|$ by the definition of the set $A$. At the same time, by Proposition \ref{prop:existence}, we know that $\int|z|^{4r - 2}\dd\mu_s^\sigma \leq M$ for any time $t \geq 0$ and for any $0 \leq \sigma \leq 1$. Therefore, for any $t > 0$ and for any $\omega \in A \cap \{t \in [\tau_{m}; \theta_{m + 1} \wedge \mathcal{T}]\}$ we have
    \begin{equation*}
        \int_{\R^d} \Big| \nabla F (X^\sigma_s - z) - \nabla F (X^\sigma_s - a) \Big| \mu_s^\sigma(\dd{z}) \leq \text{C} \kappa.
    \end{equation*}
    



    \noindent\noindent{}{\bf Step 3.} Let us come back to equation \eqref{eq:aux:xi(t)_bound}. Given the calculations in Step~2, the final bound takes the following form:
    \begin{equation*}
        \xi(t) \leq \alpha^2 - 2 C_{W}\int_{\tau_m}^{t}\xi(s)\dd{s} + 2 \kappa \text{C} \int_{\tau_m}^{t} \!\!\! \sqrt{\xi(s)} \dd{s}.
    \end{equation*}
    
    It means that, if we introduce the deterministic function $\psi$ that is the unique solution of equation
    \begin{equation*}
        \psi(u) = \alpha^2 - 2 C_{W}\int_{0}^{u}\psi(s)\dd{s} + 2 \kappa \text{C} \int_{0}^{u} \sqrt{\psi(s)} \dd{s},
    \end{equation*}
    then $\xi_{\tau_m + u} \leq \psi_u$ for any positive $u \leq t$ and for $\Prob$--a.e. point $\omega \in A \cap \{t \in [\tau_{m}; \theta_{m + 1} \wedge \mathcal{T}]\}$. 
    
    If $\alpha > \frac{\text{C} \kappa}{2C_W} $, we can solve this equation explicitly and get:
    \begin{equation}\label{eq:aux:psi_t_bound}
        \sqrt{\psi(u)} = \left(\alpha - \frac{\text{C} }{2 C_{W}}\kappa\right)\e^{- 2 C_{W} u} + \frac{\text{C} }{2 C_{W}}\kappa.
    \end{equation}
    Otherwise, we can simply bound $\psi(u)$ by
    \begin{equation}\label{eq:aux:psi_t_bound_2}
        \psi(u) \leq \frac{\text{C}}{4 C_{W}^2} \kappa^2,
    \end{equation}        
    since $\psi^\prime(u) < 0$ whenever $\psi(u) > \text{C} \kappa^2/(4 C_{W}^2)$. Thus, $\psi$ can be expressed in the form:
    \begin{equation*}
        \sqrt{\psi(u)} \leq \alpha \e^{- 2 C_{W} u} + o_\kappa(1).
    \end{equation*}

    In particular, it means that if there is some random time $\mathcal{S}$ defined for $\omega \in A$ and such that for $\Prob$--a.e. $\omega \in A$ we have $\tau_m \leq \mathcal{S} \leq \theta_{m + 1} \wedge \mathcal{T}$, then:
    \begin{equation*}
        \sqrt{\xi(\mathcal{S})} \leq \alpha \e^{- 2 C_{W} (\mathcal{S} - \tau_m)} + o_\kappa(1)
    \end{equation*}
    for $\Prob$-a.e. $\omega \in A$.

    \noindent\noindent{}{\bf Step 4.} To finalise the proof, let us show that for $\Prob$--a.e. $\omega \in A$, we have $\mathcal{T} > \theta_{m + 1}$. Indeed, if it is not true, then there exists a set $B \subseteq A$ with $\Prob(B) > 0$, such that for any $\omega \in B$, $X_{\mathcal{T}}^\sigma \notin B_{\rho}(a)$, but $Y_{\mathcal{T}}^\sigma \in B_{\rho/2}(a)$. Yet, by derivations of Step~3, for $\Prob$--a.e. $\omega \in A$:
    \begin{equation*}
        |X_{\mathcal{T}}^\sigma - Y_{\mathcal{T}}^\sigma| \leq \max \left(\alpha; \frac{\text{C}}{2 C_W} \kappa \right).        
    \end{equation*}
    Therefore, without loss of generality, we can choose $\kappa > 0$ to be small enough to get the contradiction. That proves the lemma.
    
\end{proof}

For control outside of the set $B_{\rho}(a)$ consider the following lemma.

\begin{lem}\label{lem:|X-Y|_control_outside}
    Define for some constant $L > 0$ and for any $T > 0$ the following mapping: $\psi_T: x \mapsto x\e^{L T}$. Then, under Assumptions \ref{assu:pot:V}--\ref{assu:stable}, there exists a constant $L > 0$ such that for $\alpha < \rho/4$, for any $m \geq 1$, and for any $\kappa > 0$ small enough:
    \begin{equation*}
        \Prob \left(\sup_{t \in [\theta_m; \tau_{m}]} |X_t^\sigma - Y_t^\sigma| > \psi_{\tau_{m} - \theta_m}(\alpha), A \right) = 0,
    \end{equation*} 
    where $A := \{\tau_{m} \leq S_{\mathsf{st}}^\sigma(\kappa), \; \sup_{t \leq \theta_m} |X_t^\sigma - Y_t^\sigma| \leq \alpha \}$
\end{lem}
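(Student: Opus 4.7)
The argument parallels that of Lemma~\ref{lem:|X-Y|_control_inside}, with the caveat that in the annulus $\cDc^\mathsf{e}_R\setminus B_{\rho/4}(a)$ the effective potential $W_a$ is no longer assumed convex; consequently one can only hope for exponential divergence of $|X^\sigma-Y^\sigma|$ at a rate governed by the local Lipschitz constants of $\nabla V$ and $\nabla F$, rather than contraction. As before, set $\xi(t):=|X^\sigma_t - Y^\sigma_t|^2$; since the two diffusions share the same Brownian motion, $\xi$ is differentiable in the classical sense, with
\begin{align*}
  \xi'(t) &= -2\langle X^\sigma_t - Y^\sigma_t,\, \nabla V(X^\sigma_t) - \nabla V(Y^\sigma_t)\rangle\\
  &\quad - 2\langle X^\sigma_t - Y^\sigma_t,\, \nabla F\ast\mu^\sigma_t(X^\sigma_t) - \nabla F(Y^\sigma_t - a)\rangle.
\end{align*}

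I would split the interaction term as in Step~2 of the preceding proof into $[\nabla F\ast\mu^\sigma_t(X^\sigma_t) - \nabla F(X^\sigma_t - a)] + [\nabla F(X^\sigma_t - a) - \nabla F(Y^\sigma_t - a)]$. On $[\theta_m,\tau_m]\subset[T^\sigma_\mathsf{st}(\kappa),S^\sigma_\mathsf{st}(\kappa)]$ one has $\Wass_2(\mu^\sigma_t,\delta_a)\leq\kappa$, and the moment bound of Proposition~\ref{prop:existence} then gives a control of order $\kappa$ for the first bracket, exactly as in Lemma~\ref{lem:|X-Y|_control_inside}. To handle the remaining terms a single local Lipschitz constant is needed, which requires both $X^\sigma_t$ and $Y^\sigma_t$ to lie in a common compact set. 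The latter is automatic since $Y^\sigma_t\in\cDc^\mathsf{e}_R$ on $[\theta_m,\tau_m]$; for $X^\sigma$ I introduce the auxiliary time $\mathcal{T}:=\inf\{t\geq\theta_m : |X^\sigma_t - Y^\sigma_t|>\rho/4\}$ and work on $[\theta_m,\mathcal{T}\wedge\tau_m]$, where both processes belong to a fixed compact neighborhood of $\cDc^\mathsf{e}_R$. Assumptions $(V-5)$ and $(F-4)$ then supply a constant $L_0>0$ such that $|\nabla V(X^\sigma_t)-\nabla V(Y^\sigma_t)|+|\nabla F(X^\sigma_t-a)-\nabla F(Y^\sigma_t-a)|\leq L_0|X^\sigma_t - Y^\sigma_t|$ on this subinterval.

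Combining these estimates yields $\xi'(t)\leq 2L_0\,\xi(t)+2C\kappa\sqrt{\xi(t)}$, so $u:=\sqrt{\xi}$ satisfies $u'\leq L_0 u + C\kappa$; Grönwall's lemma then delivers $u(t)\leq (\alpha+C\kappa/L_0)\,\e^{L_0(t-\theta_m)}$, which, for $\kappa$ small enough compared to $\alpha$, is absorbed into $\alpha\,\e^{L(t-\theta_m)}$ for some $L>L_0$. This is precisely the desired bound on the event $A\cap\{\mathcal{T}\geq\tau_m\}$.

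The main obstacle will be closing the bootstrap, i.e.\ excluding $A\cap\{\mathcal{T}<\tau_m\}$. On this event one would have $\rho/4 = |X^\sigma_\mathcal{T}-Y^\sigma_\mathcal{T}|\leq\alpha\,\e^{L(\mathcal{T}-\theta_m)}$, contradicting $\alpha<\rho/4$ as soon as $\alpha\,\e^{L(\tau_m-\theta_m)}<\rho/4$. In the opposite regime, where $\tau_m-\theta_m$ is too large for a single application of Grönwall to close the bootstrap, I would iterate the estimate on sub-intervals of length at most $L^{-1}\log(\rho/(4\alpha))$, using the attractive behavior at infinity from $(V-2)$ combined with $\theta_1>\theta_2$ to ensure that $X^\sigma$ cannot escape a fixed deterministic enlargement of $\cDc^\mathsf{e}_R$ on the whole of $[\theta_m,\tau_m]$; concatenating the Grönwall bounds recovers the same exponential form, possibly at the cost of enlarging the constant $L$.
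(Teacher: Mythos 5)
Your overall strategy mirrors the paper's: differentiate $\xi(t)=|X^\sigma_t-Y^\sigma_t|^2$, isolate the measure-dependent error $\nabla F\ast\mu^\sigma_t - \nabla F\ast\delta_a$ controlled by $\Wass_2(\mu^\sigma_t,\delta_a)\leq\kappa$ and the moment bound of Proposition~\ref{prop:existence}, and then close with a Gr\"onwall-type estimate plus a bootstrap. The genuine route difference is in how the superlinear drift is tamed. You introduce the stopping time $\mathcal{T}=\inf\{t\geq\theta_m : |X^\sigma_t-Y^\sigma_t|>\rho/4\}$ so that both diffusions live in a fixed compact on $[\theta_m,\mathcal{T}\wedge\tau_m]$, and you invoke a single local Lipschitz constant $L_0$, yielding a \emph{linear} Gr\"onwall inequality. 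The paper instead expands $|X^\sigma_s|^{2r-1}$ as $\text{C}\xi(s)^{(2r-1)/2}+\text{C}|Y^\sigma_s|^{2r-1}$ (Step~2), uses that $Y^\sigma$ alone is confined to $\cDc^{\mathsf e}_R$, and carries the polynomial residue forward, arriving at the \emph{superlinear} differential inequality $\xi'\leq\text{C}\xi^{r+1}+\text{C}\sqrt{\xi}$ (Step~4). Your version is more transparent and yields an honest exponential; the paper avoids a stopping time at the cost of a Riccati-type term whose exponential bound is asserted rather than derived.

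There are, however, two spots where your argument does not quite close, and it is worth noting that the paper's own proof leaves the same gaps. First, your Gr\"onwall bound $u(t)\leq(\alpha+C\kappa/L_0)\e^{L_0(t-\theta_m)}$ cannot literally be ``absorbed into $\alpha\,\e^{L(t-\theta_m)}$'': at $t-\theta_m$ small (and in particular as $\alpha\to 0$, which is what one needs in the proof of Proposition~\ref{prop:|X_t-Y_t|} where the composition starts from $\alpha=0$), the additive $C\kappa$-term dominates and no choice of $L$ independent of $\alpha$ helps. The honest conclusion is $u(t)\leq\alpha\,\e^{L(t-\theta_m)}+o_\kappa(1)$, i.e.\ the mapping $\psi_T$ should carry the same $o_\kappa(1)$ offset that $\varphi_T$ carries in Lemma~\ref{lem:|X-Y|_control_inside}; this extra term is harmless downstream since it is already tracked in $\Psi_t$ of Proposition~\ref{prop:|X_t-Y_t|}. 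Second, the ``opposite regime'' discussion is off: iterating Gr\"onwall on sub-intervals reproduces the same exponential and does not close the bootstrap, and attraction at infinity cannot give a \emph{deterministic} confinement of a Brownian-driven process. What actually saves the day is that the lemma's conclusion $\sup|X^\sigma-Y^\sigma|\leq\alpha\,\e^{L(\tau_m-\theta_m)}$ is increasingly weak as $\tau_m-\theta_m$ grows, and the only regime in which it is used non-trivially is $\tau_m-\theta_m\leq T_1$ via Lemma~\ref{lem:N_star_and_T_1}; on $\{\mathcal{T}<\tau_m\}$ with $\tau_m-\theta_m\leq T_1$ your own Gr\"onwall estimate forces $\rho/4\leq(\alpha+o_\kappa(1))\e^{L_0 T_1}$, which is excluded for $\alpha$ and $\kappa$ small, and that is the correct way to close the bootstrap.
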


\begin{proof}
    As in the proof of Lemma \ref{lem:|X-Y|_control_inside}, we first introduce $\xi(t) = |X_t^\sigma - Y_t^\sigma|^2$ and then differentiate this function with respect to time. The difference is that now we can not use convexity properties of the set $B_{\rho}(a)$. Moreover, we will not be able to provide a good upper bound for $|X^\sigma_t|$, since $Y^\sigma$ and $X^\sigma$ drift apart from each other. 
    
    \noindent\noindent{}{\bf Step 1.} The following inequality holds for $\Prob$--a.e. $\omega \in A \wedge \{t \in [\theta_m; \tau_m]\}$:
    \begin{equation*}
    \begin{aligned}
        \xi(t) & \leq |X_{\theta_m}^\sigma - Y_{\theta_m}^\sigma|^2 - 2 \int_{\theta_m}^t \langle X_s^\sigma - Y^\sigma_s; \nabla V(X_s^\sigma) - \nabla V (Y_s^\sigma) \rangle \dd{s} \\
        & \quad - 2 \int_{\theta_m}^t \langle X_s^\sigma - Y_s^\sigma; \nabla F \ast \mu_s^\sigma (X_s^\sigma) - \nabla F(Y^\sigma_s - a) \rangle \dd{s}.
    \end{aligned}
    \end{equation*}
    
    Using Cauchy–Schwarz inequality, we can obtain the following bound:
    \begin{equation*}
    \begin{aligned}
        \xi(t) & \leq \alpha^2 + 2 \int_{\theta_m}^t \sqrt{\xi(s)} \;\big|\nabla V(X_s^\sigma) - \nabla V (Y_s^\sigma) \big| \dd{s} \\
        & \quad + 2 \int_{\theta_m}^t \sqrt{\xi(s)} \; \big| \nabla F \ast \mu_s^\sigma (X_s^\sigma) - \nabla F(Y^\sigma_s - a) \big| \dd{s} =: \alpha^2 + I_1 + I_2.
    \end{aligned}
    \end{equation*}
    Let us consider $I_1$ and $I_2$ separately. In the following, $\text{C}$ will denote a generic constant that may depend on parameters defined in the assumptions.

    \noindent\noindent{}{\bf Step 2.} For the first expression $I_1$, we use Assumption (V -- 5) of \ref{assu:pot:V} and get:
    \begin{equation*}
        2\! \int_{\theta_m}^t \!\!\!\! \sqrt{\xi(s)} \;\big|\nabla V(X_s^\sigma) - \nabla V (Y_s^\sigma) \big| \dd{s} \leq \text{C} \! \! \int_{\theta_m}^t \!\!\! \xi(s) \! \left(1 + |X_s^\sigma|^{2r - 1} + |Y_s^\sigma|^{2r - 1} \right) \dd{s}.
    \end{equation*}

    By adding and subtracting $Y^\sigma_s$ in the expression above, we can upper bound it by
    \begin{equation*}
        \text{C} \int_{\theta_m}^t \xi(s) \left(\text{C} + \text{C}\xi(s)^{\frac{2r - 1}{2}} + |Y_s^\sigma|^{2r - 1} \right) \dd{s}. 
    \end{equation*}
    Moreover, since we consider only those $\omega$ for which $t \leq \tau_m$, $Y_s^\sigma$ belongs to $\cDc^\mathsf{e}_R$, which is a bounded set. Therefore, the upper bound takes the final form:
    \begin{equation}\label{eq:aux:I_1}
        I_1 \leq \text{C} \int_{\theta_m}^t \xi(s) \left(\text{C} + \xi(s)^{\frac{2r - 1}{2}}\right) \dd{s}.
    \end{equation}

    \noindent\noindent{}{\bf Step 3.} For the second expression $I_2$, let us use assumption (F -- 4) of \ref{assu:pot:F} and get:
    \begin{equation*}
    \begin{aligned}
        I_2 &\leq 
        \text{C} \int_{\theta_m}^t \sqrt{\xi(s)} \int_{\R^d} |X_s^\sigma - z - Y^\sigma_s + a| \\
        &\quad \times \big( 1 + \left|X_s^\sigma - z\right|^{2 r - 1} + \left|Y_s^\sigma - a\right|^{2 r - 1} \big)\mu_s^\sigma(\dd{z})\dd{s} \\ 
        & \leq \text{C} \int_{\theta_m}^t \int_{\R^d} \xi(s) \big( 1 + \left|X_s^\sigma - z\right|^{2 r - 1} + \left|Y_s^\sigma - a\right|^{2 r - 1} \big)\mu_s^\sigma(\dd{z})\dd{s} \\
        & \quad + \text{C} \int_{\theta_m}^t \int_{\R^d} \sqrt{\xi(s)} |z - a| \big( 1 + \left|X_s^\sigma - z\right|^{2 r - 1} + \left|Y_s^\sigma - a\right|^{2 r - 1} \big)\mu_s^\sigma(\dd{z})\dd{s}.
    \end{aligned}
    \end{equation*}

    Let us denote the two expressions above as $A_1$ and $A_2$. For $A_1$, we add and subtract $Y_s^\sigma$ inside $|X_t^\sigma - z|^{2r - 1}$ and get:
    \begin{equation*}
        A_1 \leq \text{C} \int_{\theta_m}^t \int_{\R^d} \xi(s) \left( \text{C} + \text{C} \xi(s)^{\frac{2r - 1}{2}} + \text{C}|Y_s^\sigma|^{2r - 1} + |z|^{2r - 1} + |a|^{2r - 1}\right) \mu_s^\sigma(\dd{z}) \dd{s}.
    \end{equation*}
    As was pointed out above, since $t \in [\theta_m; \tau_m]$, $|Y_s^\sigma - a|$ is bounded for $\Prob$--a.e. $\omega \in A \wedge \{t \in [\theta_m; \tau_m]\}$. Moreover, by Proposition \ref{prop:existence}, there exists $M > 0$ such that $\int |z|^{2r - 1} \dd\mu_s^\sigma < M$. Thus:
    \begin{equation*}
        A_1 \leq \text{C} \int_{\theta_m}^t \xi(s) \left( \text{C} + \xi(s)^{\frac{2r - 1}{2}} \right) \dd{s}.
    \end{equation*}
    Similarly, for $A_2$:
    \begin{equation*}
    \begin{aligned}
        A_2 &\leq \text{C} \int_{\theta_{m}}^t \int_{\R^d} \sqrt{\xi(s)} |z - a| \\
        & \quad \times \left( \text{C} + \text{C} \xi(s)^{\frac{2r - 1}{2}} + \text{C}|Y_s^\sigma|^{2r - 1} + |z|^{2r - 1} + |a|^{2r - 1} \right) \mu_s^\sigma(\dd{z}) \dd{s}.
    \end{aligned}
    \end{equation*}
    By Cauchy–Schwarz inequality and since both $\int_{\R^d}|z| \dd \mu_s^\sigma$ and $\int_{\R^d}|z|^{4r - 2}\dd\mu_s^\sigma$ are bounded by a constant, we get:
    \begin{equation*}
        A_2 \leq \text{C} \int_{\theta_{m}}^t \sqrt{\xi(s)} \sqrt{\text{C} + \xi(s)^{2r - 1}} \dd{s},
    \end{equation*}
    which gives the following bound for $I_2$:
    \begin{equation*}
        I_2 \leq \text{C} \int_{\theta_m}^t \Bigg( \text{C} \xi(s) \left( \text{C} + \xi(s)^{\frac{2r - 1}{2}} \right) + \sqrt{\xi(s)} \sqrt{\text{C} + \xi(s)^{2r - 1}} \Bigg)\dd{s}.
    \end{equation*}
    Since for any $\alpha > 1$ we have $x^\alpha \leq \sqrt{x} + x^{\alpha + 1}$ and since $\sqrt{x} \leq 1 + x$, we can roughly bound $I_2$ by the following expression:
    \begin{equation}\label{eq:aux:I_2}
        I_2 \leq \text{C} \int_{\theta_m}^t \left(\xi(s)^{r + 1} + \sqrt{\xi(s)} \right)\dd{s}
    \end{equation}

    \noindent\noindent{}{\bf Step 4.} From \eqref{eq:aux:I_1} and \eqref{eq:aux:I_2} we get that for $\Prob$--a.e. $\omega \in A \wedge \{t \in [\theta_m; \tau_m]\}$:
    \begin{equation*}
        \xi(t) \leq  \alpha^2 + \text{C} \int_{\theta_m}^t \left(\text{C}\xi(s)^{r + 1} + \sqrt{\xi(s)}\right)\dd{s}.
    \end{equation*}

    Obviously, $\xi(t)$ is bounded for respective $\omega$ and $t$ by a function of the form:
    \begin{equation*}
        \psi(u) = \alpha^2 + \text{C} \int_{0}^u \left(\text{C}\psi(s)^{r + 1} + \sqrt{\psi(s)}\right)\dd{s},
    \end{equation*}
    which in its term is bounded by the following expression. Note that for each period of time when $\psi(u) \leq 1$, it is simply bounded by a linear function:
    \begin{equation*}
        \psi(u) \leq \alpha^2 + \text{C} u.
    \end{equation*}
    Otherwise, its upper bound take the form:
    \begin{equation*}
        \psi(u) \leq \alpha^2 + \text{C} \int_{0}^u \psi(s)^{r + 1}\dd{s},
    \end{equation*}
    which is a polynomial. By choosing the right constant $L > 0$, we can easily bound $\psi$ by
    \begin{equation*}
        \psi(u) \leq \alpha^2\e^{2Lu},
    \end{equation*}
    which proves the Lemma by using the same approach as in Steps 3 and 4 of the proof of Lemma \ref{lem:|X-Y|_control_inside}.
\end{proof}

The following lemma establishes the maximum number of excursions of the process $Y^\sigma$ from $B_\rho(a)$. Let us define the height of the effective potential inside the sets of the form $B_{\rho/2}(a)$ as $Q^\mathsf{c} := \inf_{z \in S_{\rho/2}(a)} \{W_a(z) - W_a(a)\}$. We remind that $H^\mathsf{e}_R := \inf_{z \in \partial \cDc_{R}^\mathsf{e}} \{W_a(z) - W_a(a)\}$ is the height of the effective potential inside the set $\cDc^\mathsf{e}_R$. Consider the following lemma:
\begin{lem}\label{lem:N_star_and_T_1}
    Let $N^{*} := 2 \left\lceil \exp{\frac{2}{\sigma^2} (H_R^\mathsf{e} - Q^{\mathsf{c}} + \kappa)} \right\rceil$. Let $\tau_{N^*}$ be defined as in \eqref{eq:def_of_tau_theta}. Then, for any $\kappa > 0$ small enough:
    \begin{enumerate}
        \item $\Prob(\tau_{\cDc_R^\mathsf{e}}^{Y, \sigma} > \tau_{N^*}) \xrightarrow[\sigma \to 0]{} 0$.
        \item There exists $T_1 > 0$ such that $\Prob(\exists i \leq N^*: \theta_i - \tau_{i - 1
        } > T_1) \xrightarrow[\sigma \to 0]{} 0$.
    \end{enumerate}
    
\end{lem}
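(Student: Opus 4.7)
The plan is to prove both items by applying the strong Markov property of the linear Itô diffusion $Y^{y,\sigma}$ of \eqref{eq:def:Y_y_sigma} at the stopping times $\tau_i$ and $\theta_i$, combined with classical Freidlin--Wentzell estimates from \cite{DZ}. Throughout, $H^\mathsf{e}_R$ and $Q^\mathsf{c}$ are geometric constants depending only on $W_a$, $\cDc^\mathsf{e}_R$ and $\rho$.

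For item~1, the key observation is that on the event $\{\tau^{Y,\sigma}_{\cDc^\mathsf{e}_R} > \tau_{i-1}\}$ one has $Y^\sigma_{\tau_{i-1}} \in \overline{B_{\rho/4}(a)}$, so by the strong Markov property the conditional probability of exiting $\cDc^\mathsf{e}_R$ during the $i$-th cycle, namely $Y^\sigma_{\tau_i} \in \partial \cDc^\mathsf{e}_R$, is bounded below by
\begin{equation*}
    p_\sigma := \inf_{y \in \overline{B_{\rho/4}(a)}} \TrProb_y\!\left(Y^{y,\sigma}_{\tau_0} \in \partial \cDc^\mathsf{e}_R\right),
\end{equation*}
with $\tau_0$ from \eqref{eq:def_of_tau_0_theta_0}. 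In the reversible gradient case the quasi-potential of any trajectory from $\overline{B_{\rho/4}(a)}$ to $\partial \cDc^\mathsf{e}_R$ that does not first return to $B_{\rho/4}(a)$ is exactly $H^\mathsf{e}_R - Q^\mathsf{c}$: the optimal climb crosses $\partial B_{\rho/2}(a)$ at a point of height $Q^\mathsf{c}$ and reaches $\partial \cDc^\mathsf{e}_R$ at height $H^\mathsf{e}_R$. The Freidlin--Wentzell lower bound then gives $p_\sigma \geq \exp\!\bigl(-\tfrac{2}{\sigma^2}(H^\mathsf{e}_R - Q^\mathsf{c} + \kappa/2)\bigr)$ for $\sigma$ small enough, and iterating strong Markov yields
\begin{equation*}
    \Prob\!\left(\tau^{Y,\sigma}_{\cDc^\mathsf{e}_R} > \tau_{N^*}\right) \leq (1-p_\sigma)^{N^*} \leq \exp\!\left(-N^* p_\sigma\right) \xrightarrow[\sigma \to 0]{} 0,
\end{equation*}
since by the choice of $N^*$ one has $N^* p_\sigma \geq 2\exp(\kappa/\sigma^2)$.

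For item~2, which controls the duration of each cycle, I will use Assumption~\ref{assu:stable} and compactness to obtain $T_1 > 0$ such that the deterministic flow $\psi_t$ of $-\nabla W_a$ carries every $y$ in the compact set $\overline{\cDc^\mathsf{e}_R \setminus B_{\rho/4}(a)}$ into $B_{\rho/8}(a)$ by time $T_1$. The LDP for $Y^{y,\sigma}$ on $[0,T_1]$, with good rate function $\varphi \mapsto \tfrac{1}{4}\int_0^{T_1} |\dot\varphi + \nabla W_a(\varphi)|^2\,\dd s$, then yields
\begin{equation*}
    \sup_y \TrProb_y\!\Bigl(\sup_{t\in[0,T_1]} |Y^{y,\sigma}_t - \psi_t(y)| > \rho/8\Bigr) \leq \exp(-2A/\sigma^2),
\end{equation*}
for some $A > 0$. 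Since $|\nabla W_a|$ is bounded below on any compact set avoiding $a$, the infimum of the rate function over trajectories that stay outside $B_{\rho/4}(a)$ during $[0,T_1]$ grows at least linearly in $T_1$; choosing $T_1$ large enough thus guarantees $A > H^\mathsf{e}_R - Q^\mathsf{c} + \kappa$. A strong Markov application at each cycle together with a union bound over the $N^*$ cycles then gives the desired estimate.

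\textbf{Main obstacle.} The crux lies in the quantitative control of the LDP exponents. In item~1, obtaining the sharp exponent $H^\mathsf{e}_R - Q^\mathsf{c}$ requires a precise near-optimal perturbation realising the climb from $\partial B_{\rho/2}(a)$ to $\partial \cDc^\mathsf{e}_R$ without detour through $B_{\rho/4}(a)$. In item~2, one must verify that the rate function cost of staying outside the attracting ball for duration $T_1$ can indeed be made arbitrarily large by increasing $T_1$, so as to dominate the factor $N^* = 2\lceil\exp(\tfrac{2}{\sigma^2}(H^\mathsf{e}_R - Q^\mathsf{c} + \kappa))\rceil$ in the union bound.
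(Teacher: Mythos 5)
Your proof takes a genuinely different route for item~1 than the paper. Where the paper counts the number of \emph{short} attraction intervals $[\tau_{i-1};\theta_i]$ — bounding $\Prob\big(\#\{i\le N^*: \theta_i - \tau_{i-1} < \e^{\frac{2}{\sigma^2}(Q^\mathsf{c}-\kappa/2)}\} > N^*/2\big)$ via a combinatorial $2^{N^*}o_\sigma(1)^{N^*/2}$ bound and the exit-time upper estimate of Proposition~\ref{prop:DZ_exit_time} — you instead iterate a per-cycle exit-probability lower bound and use the Bernoulli estimate $(1-p_\sigma)^{N^*}$. That is a legitimate and arguably cleaner reduction, and it does match the chosen $N^*$. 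However, your definition of $p_\sigma$ is incorrect as written: with $\tau_0 := \inf\{t \ge 0: Y^{y,\sigma}_t \in B_{\rho/4}(a) \cup \partial\cDc^\mathsf{e}_R\}$ from \eqref{eq:def_of_tau_0_theta_0}, any $y \in \overline{B_{\rho/4}(a)}$ gives $\tau_0 = 0$ and $Y^{y,\sigma}_{\tau_0} = y \notin \partial\cDc^\mathsf{e}_R$, so $\inf_{y\in\overline{B_{\rho/4}(a)}}\TrProb_y(Y^{y,\sigma}_{\tau_0}\in\partial\cDc^\mathsf{e}_R) = 0$. The strong Markov property must instead be applied at the stopping times $\theta_i$, where $Y^\sigma_{\theta_i} \in \partial B_{\rho/2}(a)$, and you should set $p_\sigma := \inf_{y\in\partial B_{\rho/2}(a)}\TrProb_y(Y^{y,\sigma}_{\tau_0}\in\partial\cDc^\mathsf{e}_R)$; it is for \emph{that} quantity that the quasi-potential-gap lower bound $p_\sigma \ge \exp\!\big(-\tfrac{2}{\sigma^2}(H^\mathsf{e}_R - Q^\mathsf{c} + \kappa/2)\big)$ is the standard Freidlin--Wentzell estimate (and even there, deducing the exact gap $H^\mathsf{e}_R - Q^\mathsf{c}$ for a sphere of fixed radius $\rho/2$, rather than a vanishing one, needs a word of justification). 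Once $p_\sigma$ is fixed, the rest of your item~1 argument goes through. For item~2 your argument is essentially the paper's: you sketch a proof of what the paper simply invokes as \cite[Lemma~5.7.19]{DZ} (linear-in-$T_1$ growth of the rate-function cost of staying away from $B_{\rho/4}(a)$), followed by the same strong-Markov plus union bound over the $N^*$ cycles.
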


\begin{proof}
We separate the proof into 2 steps.

    \noindent\noindent{}{\bf Step 1.} Let us prove the first part of the lemma. Note, that if $\tau_{N^*}$ is less or equal then $\exp{\frac{2}{\sigma^2}(H^\mathsf{e}_R + \frac{\kappa}{2})}$, then necessarily the number of intervals of the form $[\tau_{i - 1}; \theta_i]$ such that $\theta_i - \tau_{i - 1} \geq \exp{\frac{2}{\sigma^2}(Q^\mathsf{c} - \frac{\kappa}{2})}$ can not exceed $N^*/2$ by definition of the latter. Based on this observation and using Proposition~\ref{prop:DZ_exit_time}, we have 
    \begin{equation}\label{eq:aux:num_of_i_leq_N_star}
        \begin{aligned}
        \Prob(\tau_{\cDc_R^\mathsf{e}}^{Y, \sigma} > \tau_{N^\ast}) &\leq \Prob\Big(\tau_{N^\ast} < \tau_{\cDc_R^\mathsf{e}}^{Y, \sigma} < \e^{\frac{2}{\sigma^2} (H_R^\mathsf{e} + \frac{\kappa}{2})}\Big) + \Prob \left(\tau_{\cDc_R^\mathsf{e}}^{Y, \sigma} \geq \e^{\frac{2}{\sigma^2}(H_R^\mathsf{e} + \frac{\kappa}{2})} \right) \\
        & \leq \Prob\Big(\# \Big\{i \leq N^*: \theta_i - \tau_{i - 1} < \e^{\frac{2}{\sigma^2} (Q^\mathsf{c} - \frac{\kappa}{2})} \Big\} > \frac{N^*}{2} \Big) + o_\sigma(1),
        \end{aligned}    
    \end{equation}
    where $o_\sigma(1)$ is an infinitesimal with respect to $\sigma$. Consider 
    \begin{equation}\label{eq:aux:num_of_i_leq_N_star2}
        \begin{aligned}
            \Prob\Big(\# & \Big\{i \leq N^*: \theta_i - \tau_{i - 1} < \e^{\frac{2}{\sigma^2} (Q^\mathsf{c} - \frac{\kappa}{2})} \Big\} > \frac{N^*}{2} \Big) \\
            &\leq \sum_{k = \left\lceil \frac{N^*}{2}\right\rceil}^{N^*} \sum_{(i_1, \,\dots, i_k)} \Prob \left(\bigcap_{j = 1}^k \Big\{\theta_{i_j} - \tau_{i_j - 1} < \e^{\frac{2}{\sigma^2}(Q^\mathsf{c} - \frac{\kappa}{2})} \Big\} \right) \\
            &\leq \sum_{k = \left\lceil \frac{N^*}{2}\right\rceil}^{N^*} 2^{N^*} \Big(\sup_{y \in B_{\rho/4}(a)}\TrProb_{y}\!\left(\theta_0 < \e^{\frac{2}{\sigma^2}(Q^\mathsf{c} - \frac{\kappa}{2})} \right) \Big)^k,
        \end{aligned}
    \end{equation}
    where $(i_1,\; \dots \;, i_k)$ stands for all possible choices of $k$ numbers $i_1 < \dots < i_k$ from the set $\{1, \; \dots \;, N^*\}$. Note that the number of such combinations can be roughly bounded by $2^{N^*}\!$. The last inequality in \eqref{eq:aux:num_of_i_leq_N_star2} we get due to the fact that $Y^\sigma$ is a strong Markov process and $\theta_0$ is defined in \eqref{eq:def_of_tau_0_theta_0}. By the exit-time result for diffusions of type $Y^\sigma$ (see Proposition~\ref{prop:DZ_exit_time}), for any $\kappa < \rho/4$:
    \begin{equation*}
        \sup_{y \in B_{\rho/4}(a)}\TrProb_{y}\!\left(\theta_0 < \e^{\frac{2}{\sigma^2}(Q^\mathsf{c} - \frac{\kappa}{2})} \right) = o_{\sigma}(1).
    \end{equation*}
    After adding this bound to equations \eqref{eq:aux:num_of_i_leq_N_star2} and \eqref{eq:aux:num_of_i_leq_N_star}, we get:
    \begin{equation*}
        \begin{aligned}
            \Prob(\tau_{\cDc_R^\mathsf{e}}^{Y, \sigma} > \tau_{N^\ast}) & \leq 2^{N^*} o_\sigma(1)^{\left\lceil \frac{N^*}{2}\right\rceil}\frac{1 - o_\sigma(1)^{\left\lceil \frac{N^*}{2}\right\rceil}}{1 - o_{\sigma}} + o_\sigma(1) = o_\sigma(1).
        \end{aligned}
    \end{equation*}

    \noindent\noindent{}{\bf Step 2.} For the second part of the lemma, we use \cite[Lemma 5.7.19]{DZ}, that is the fact that there exists $T_1 > 0$ big enough such that
    \begin{equation}\label{eq:aux:tau_0>T_1}
        \limsup_{\sigma \to 0} \frac{\sigma^2}{2} \log \sup_{y \in B_{\rho/2}(a)} \TrProb_y (\tau_0 > T_1) < - (H_R^{\mathsf{e}} - Q^\mathsf{c} + 1).
    \end{equation}
    
    Consider the following equations:    
    \begin{equation*}
        \begin{aligned}
            \Prob(\exists i \leq N^*: \tau_{i} - \theta_i  > T_1) &\leq \sum_{i = 1}^{N^*} \Prob(\tau_i - \theta_i > T_1 ) \leq N^* \sup_{y \in B_{\rho/2}(a)}\TrProb_{y} (\tau_0 > T_1),
        \end{aligned}
    \end{equation*}
    where the last inequality is due to the Markov property of the diffusion $Y^\sigma$. Finally, by \eqref{eq:aux:tau_0>T_1}, we get:
    \begin{equation*}
    \begin{aligned}
        \Prob(\exists i \leq N^*: \tau_{i} - \theta_i  > T_1) &\leq  2 \left(\e^{2(H_R^\mathsf{e} - Q^{\mathsf{c}} + \kappa)/\sigma^2} + 1\right)\e^{-2(H_R^\mathsf{e} - Q^{\mathsf{c}} + 1)/\sigma^2} \\
        & \xrightarrow[\sigma \to 0]{} 0,
    \end{aligned}
    \end{equation*}
    which proves the lemma if $\kappa$ is chosen to be small enough.
\end{proof}

Now we are ready to prove Proposition~\ref{prop:|X_t-Y_t|}.

\begin{proof}[Proof of Proposition \ref{prop:|X_t-Y_t|}]
    Since, by Lemma \ref{lem:N_star_and_T_1}, each time spent outside of $B_{\rho/2}(a)$ is bounded by a constant $T_1 > 0$ with high probability, we are interested in the composition
    \begin{equation}
        \psi_{T_1} \circ \phi_{t} (x) = x \e^{(LT_1 - Kt)} + o_\kappa(1) \e^{LT_1} \leq x \e^{(LT_1 - Kt)} + o_\kappa(1).
    \end{equation}
    Let us introduce the following mapping:
    \begin{equation*}
        \Psi_t(x) := x \e^{L T_1 - Kt} + o_\kappa(1).
    \end{equation*}

    Then the results of Lemmas \ref{lem:|X-Y|_control_inside} and \ref{lem:|X-Y|_control_outside} can be rewritten in the following form: for any $\kappa > 0$ small enough, for any $m \geq 1$ and for any $\alpha < \kappa$:
    \begin{equation}\label{eq:Psi_t_|X_t-Y_t|}
    \begin{aligned}
        \Prob \Bigg(\sup_{t \in [\tau_m; \tau_{m + 1}]} |X_t^\sigma - Y_t^\sigma|& > \Psi_{\theta_{m + 1} - \tau_m}(\alpha);  \\
        &\quad \tau_{m + 1} \leq S_{\mathsf{st}}^\sigma(\kappa), \sup_{t \leq \tau_m} |X_t^\sigma - Y_t^\sigma| \leq \alpha \Bigg) = 0.
    \end{aligned}
    \end{equation}

    Let us now come back to the statement of the proposition. Fix some $0 < \eta < H_R^\mathsf{e} - H$. Note that,  if $\sup_t |X_t^\sigma - Y_t^\sigma| > \alpha$, for $t \in [T_{\mathsf{st}}^\sigma(\kappa); S_\mathsf{st}^\sigma(\kappa) \wedge \e^{\frac{2}{\sigma^2}(H + \eta)} ]$, then it should happen for $t$ belonging to one of the periods of time of the form $[\tau_{k - 1}; \tau_{k}]$ that are before $S_\mathsf{st}^\sigma(\kappa) \wedge \e^{\frac{2}{\sigma^2}(H + \eta)}$. Moreover, since we know, by Lemma \ref{lem:N_star_and_T_1}, that $\tau_{N^*}$ happens after $S_\mathsf{st}^\sigma(\kappa) \wedge \e^{\frac{2}{\sigma^2}(H + \eta)}$ with high probability, the number of periods of the form $[\tau_{k - 1}; \tau_{k}]$, during which $|X_t^\sigma - Y_t^\sigma|$ can surpass the level $\alpha$, is bounded by $N^*$. Given these observations, consider the following line of equations:
     \begin{equation}\label{eq:aux:|X_t-Y_t|>kappa}
     \begin{aligned}
        &\Prob \left(\sup \Big\{\big|X_t^\sigma - Y_t^\sigma \big|: t \in \Big[T_{\mathsf{st}}^\sigma(\kappa); S_\mathsf{st}^\sigma(\kappa) \wedge \exp{\frac{2}{\sigma^2}(H + \eta)} \Big] \Big\} > \alpha \right) \\
        & \leq \Prob \left( \tau_{N^*} \leq S_{\mathsf{st}}^\sigma(\kappa) \wedge \exp{\frac{2}{\sigma^2}(H + \eta)} \right) + \Prob(\exists m \leq N^*: \tau_m - \theta_m > T_1) \\
        &\quad  + \Prob \Big(\exists k^* \leq N^*: \sup_{t \in [\tau_{k^* - 1}; \tau_{k^*}]} |X_t^\sigma - Y_t^\sigma| > \alpha \text{ and } \\
        & \qquad\qquad  \tau_{k^*} \leq S_{\mathsf{st}}^\sigma(\kappa) \wedge \exp{\frac{2}{\sigma^2}(H + \eta)},  \forall m \leq N^*: \tau_m - \theta_m \leq T_1,  \Big) \\
        & = : I_1 + I_2 + I_3.
     \end{aligned}
    \end{equation}

    For the first probability:
    \begin{equation*}
    \begin{aligned}
        I_1 &\leq \Prob \left(\tau_{N^*} \leq \tau_{\cDc_R^{\mathsf{e}}}^{Y, \sigma} \right) + \Prob \left(\tau_{\cDc_R^{\mathsf{e}}}^{Y, \sigma} < \tau_{N^*}  \leq \exp{\frac{2}{\sigma^2}(H + \eta)} \right)\\
        &\leq \Prob \left(\tau_{N^*} \leq \tau_{\cDc_R^{\mathsf{e}}}^{Y, \sigma} \right) + \Prob(|X_{T_{\mathsf{st}}^\sigma(\kappa)} - a| > \kappa) + \sup_{y \in B_{\kappa}(a)} \!\!\!\TrProb_y(\tau_{\cDc_R^{\mathsf{e}}}^{Y, \sigma} < \exp{\frac{2}{\sigma^2}(H + \eta)}) \\
        &\xrightarrow[\sigma \to 0]{} 0,         
    \end{aligned}
    \end{equation*}
    by Lemmas \ref{lem:N_star_and_T_1}, \ref{lem:stab_in_finite_time} and Proposition \ref{prop:DZ_exit_time}, and since $H + \eta < H_{R}^\mathsf{e}$. At the same time, by Lemma \ref{lem:N_star_and_T_1}, the second expression:
    \begin{equation*}
        I_2 \xrightarrow[\sigma \to 0]{} 0.
    \end{equation*}
    
    What is left is the third expression. Note that, by \eqref{eq:Psi_t_|X_t-Y_t|}, $I_3$ is bounded by:
    \begin{equation*}
        \sum_{k^* = 1}^{N^*} \Prob(\Psi_{\theta_{k^*} - \tau_{k^* - 1}} \circ \dots \circ \Psi_{\theta_2 - \tau_1} \circ \Psi_{\theta_1 - T_{\mathsf{st}}(\kappa)}\circ 0 > \kappa).         
    \end{equation*}
    We get that expression by observing that, if there exists $k^*$ such that inequality $$\sup_{t \in [\tau_{k^*}; \tau_{k^* + 1}]} |X_t^\sigma - Y_t^\sigma| > \kappa$$ holds, then, given that this difference is smaller than $\kappa$ for times smaller than $\tau_{k^*}$, we can control this difference in terms of $\Psi_{\theta_k - \tau_{k - 1}}$ by \eqref{eq:Psi_t_|X_t-Y_t|}. 

    Let us study the sum above. By definition of $\Psi_T$, we have:    
    \begin{equation*}
    \begin{aligned}
        \sum_{k^* = 1}^{N^*}&\Prob(\Psi_{\theta_{k^*} - \tau_{k^* - 1}} \circ \dots \circ \Psi_{\theta_2 - \tau_1} \circ \Psi_{\theta_1 - T_{\mathsf{st}}(\kappa)}\circ 0 > \kappa) \\        
        & \leq \sum_{k^* = 1}^{N^*} \Prob \left(\sum_{i = 1}^{k^* - 1}o^i_\kappa(1)\exp{\sum_{j = i}^{k^*}(L T_1 - K(\theta_{j} - \tau_{j - 1}))} + o^{k^*}_\kappa(1)  > \kappa \right). 
    \end{aligned}
    \end{equation*}
We can continue the calculations and get the following upper bound:    
    \begin{equation*}
    \begin{aligned}
        &\sum_{k^* = 1}^{N^*} \Prob \left(\sup_{1 \leq i \leq k^* - 1}\exp{\sum_{j = i}^{k^*}(L T_1 - K(\theta_{j} - \tau_{j - 1}))}  > \frac{\kappa - o^{k^*}_\kappa(1)}{k^* - 1} \right) \\
        & \leq \sum_{k^* = 1}^{N^*} \Prob \left(\sup_{1 \leq i \leq k^* - 1} \left\{\sum_{j = i}^{k^*}(L T_1 - K(\theta_{j} - \tau_{j - 1})) \right\}  > - \log (k^* - 1) \right)
    \end{aligned}
    \end{equation*}

    Note that, if there are more than $(k^* - i + 1)/2$ intervals of the size $(\theta_j - \tau_{j - 1}) > \exp{\frac{2(Q^\mathsf{c} - \kappa)}{\sigma^2}}$, then necessarily
    \begin{equation*}
    \begin{aligned}
        \sum_{j = i}^{k^*} (LT_1 - K(\theta_j - \tau_{j - 1})) &\leq (k^* - i + 1) LT_1  - \left \lceil\frac{(k^* - i + 1)}{2} \right \rceil K \e^{\frac{2 (Q^\mathsf{c} - \kappa) }{\sigma^2}}\\ 
        & \leq  (k^* - i + 1) \Big(LT_1 - \frac{K}{2}\e^{\frac{2 (Q^\mathsf{c} - \kappa)}{\sigma^2}} \Big)\\
        &\leq k^* \Big(LT_1 - \frac{K}{2}\e^{\frac{2 (Q^\mathsf{c} - \kappa)}{\sigma^2}} \Big).      
    \end{aligned}
    \end{equation*}
    Since $k^* > \log(k^* - 1)$ for any $k^* \geq 1$ and since $\Big(LT_1 - \frac{K}{2}\e^{\frac{2 (Q^\mathsf{c} - \kappa)}{\sigma^2}} \Big)$ is negative for small enough $\sigma$, we get
    \begin{equation*}
        \sup_{1 \leq i \leq k^* - 1} \left\{\sum_{j = i}^{k^*}(L T_1 - K(\theta_{j} - \tau_{j - 1})) \right\}  \leq - \log (k^* - 1),
    \end{equation*}
    which means that it is impossible to have more than $(k^* - i + 1)/2$ intervals of the size $\theta_j - \tau_{j - 1} \geq \exp{\frac{2(Q^\mathsf{c} - \kappa)}{\sigma^2} }$. Therefore, we have:
    
    \begin{equation*}
        \begin{aligned}
            \Prob &\left(\sup_{1 \leq i \leq k^* - 1} \Big\{\sum_{j = i}^{k^*}(L T_1 - K(\theta_{j} - \tau_{j - 1})\} \Big)  > - \log (k^* - 1) \right)\\
            & \leq \Prob \Bigg( \forall i \leq k^* - 1: \# \left\{j: i \leq j \leq k^*: \theta_j - \tau_{j - 1} \leq \exp{\frac{2(Q^\mathsf{c} - \kappa)}{\sigma^2} } \right\} \\
            & \qquad\qquad\qquad\qquad\qquad\qquad \geq \frac{k^* - i + 1}{2}\Bigg) \\
            & \leq \min_{1 \leq i \leq k^* - 1}\sum_{n = \left\lfloor \frac{k^* - i + 1}{2} \right\rfloor}^{k^*} \sum_{(j_1, \dots, j_{n})} \Prob \left(\bigcap_{l = 1}^{n}\left\{ \theta_{j_l} - \tau_{j_l - 1}\leq \exp{\frac{2(Q^\mathsf{c} - \kappa)}{\sigma^2} } \right\} \right).
        \end{aligned}
    \end{equation*}    
    Since the number of combinations of the form $(j_1, \dots, j_n)$ can be roughly bounded by $2^n$, we can deduce
    \begin{equation*}
    \begin{aligned}
       \min_{1 \leq i \leq k^* - 1} & \sum_{n = \left\lfloor \frac{k^* - i + 1}{2} \right\rfloor}^{k^*}\sum_{(j_1, \dots, j_{n})} \Prob \left(\bigcap_{l = 1}^{n}\left\{ \theta_{j_l} - \tau_{j_l - 1}\leq \exp{\frac{2(Q^\mathsf{c} - \kappa)}{\sigma^2} } \right\} \right) \\
        & \leq \sum_{n = \left\lfloor \frac{k^*}{2} \right\rfloor}^{k^*} 2^{n} \left( \sup_{y \in B_{\rho/4}(a)}\TrProb_{y}\! \left(\theta_0 \leq \exp{\frac{2(Q^\mathsf{c} - \kappa)}{\sigma^2} }\right) \right)^{n} \\
        & \leq o_\sigma(1)^{\left\lfloor \frac{k^* + 1}{2} \right\rfloor} \frac{1 + o_\sigma(1)^{\left\lfloor \frac{k^* + 1}{2} \right\rfloor}}{1 - o_\sigma(1)},
    \end{aligned}
    \end{equation*}
    by Proposition~\ref{prop:DZ_exit_time}.

    Combining inequalities above, we can come back to \eqref{eq:aux:|X_t-Y_t|>kappa} and conclude that $I_3$ also tends to $0$ with $\sigma \to 0$, for each $\kappa > 0$ small enough, which finalizes the proof.
    
\end{proof}

\subsection{Control of \texorpdfstring{$Y^\sigma$}{Y}: Proof of Lemma \ref{lem:Y_t_notin_B_rho}}
\label{s:proof_control_of_Y_sigma}

We can show, using large deviations techniques, that there exists a uniform upper bound on the time of convergence of $Y^\sigma$ inside $B_{\rho/4}(a)$. Namely, for any $r>0$ small enough, there exists $\overline{T} > 0$ such that
\begin{equation*}
    \sup_{y \in \cDc_{r}^\mathsf{e}} \TrProb_y(Y^\sigma_{\overline{T}} \notin B_{\rho/4}(a)) \xrightarrow[\sigma \to 0]{} 0.
\end{equation*}
The construction of such a $\overline{T}$ can be found in \cite[Proof of Lemma 5.7.19]{DZ}. 

Therefore, for small enough $\sigma > 0$, given only $r$ and $\rho$, we can choose a continuous function $\overline{o}(\sigma)$ such that $\overline{o}(\sigma) \xrightarrow[\sigma \to 0]{} 0$ and we have
\begin{equation}\label{eq:aux:prob_Y_T_notin_B_a}
    \sup_{y \in \cDc_{r}^\mathsf{e}} \TrProb_y(Y^\sigma_{\overline{T}} \notin B_{\rho/4}(a)) \leq \overline{o}(\sigma),
\end{equation}
for all $\sigma > 0$ small enough.

Moreover, by Proposition~\ref{prop:DZ_exit_time}, we know that for any $\delta > 0$ we have $\Prob(\tau_{\cDc_r^\mathsf{e}}^{Y, \sigma} \leq \exp{\frac{2(H_r^\mathsf{e} - \delta)}{\sigma^2}}) \xrightarrow[\sigma \to 0]{} 0$. After fixing some positive $r>0$ and choosing $\delta$ to be small enough such that $H < H^\mathsf{e}_r - \delta$, we can define $\eta > 0$ as a small enough number such that $H + \eta < H^\mathsf{e}_r - \delta$.
In the following, we can restrict ourselves only to those trajectories that do not leave domain $\cDc_r^\mathsf{e}$ before time $\exp{\frac{2(H + \eta)}{\sigma^2}} < \exp{\frac{2(H_r^\mathsf{e} - \delta)}{\sigma^2}}$. Define the event $A := \{\tau^{Y, \sigma}_{\cDc_r^\mathsf{e}} > \exp{\frac{2(H + \eta)}{\sigma^2}}\}$. 

Consider the following inequalities. By Lemma~\ref{lem:stab_in_finite_time} and the definition of $Y^\sigma$, for any $\kappa > 0$, we can introduce $\overline{o}_\kappa(\sigma)$, the modification of function $\overline{o}(\sigma)$ such that \ref{eq:aux:prob_Y_T_notin_B_a} still holds and also we have
\begin{equation}\label{eq:aux:Y_sigma_T_st}
    \Prob(Y^\sigma_{T_\mathsf{st}^\sigma(\kappa)} \notin B_{\rho/4}(a), A) \leq \overline{o}_\kappa(\sigma).
\end{equation}
At the same time, using the Markov property of diffusion $Y^\sigma$, for small enough $\sigma > 0$, we have
\begin{equation*}
\begin{aligned}
    \Prob &(Y^\sigma_{T_\mathsf{st}^\sigma(\kappa) + \overline{T}} \notin B_{\rho/4}(a), A) \\
    &\leq  \sup_{y \in  B_{\rho/4}(a)}\TrProb_y (Y^\sigma_{\overline{T}} \notin B_{\rho/4}(a), A) \Prob(Y^\sigma_{T_\mathsf{st}^\sigma(\kappa)} \in B_{\rho/4}(a), A) \\
    & \quad +   \sup_{y \in \cDc_r^\mathsf{e} \setminus B_{\rho/4}(a)}\TrProb_y(Y^\sigma_{\overline{T}} \notin B_{\rho/4}(a), A) \Prob(Y_{T_\mathsf{st}^\sigma(\kappa)} \notin B_{\rho/4}(a), A)\\
    & \leq \overline{o}_\kappa(\sigma) + \overline{o}_\kappa^2(\sigma),
\end{aligned}
\end{equation*}
by Equations \eqref{eq:aux:prob_Y_T_notin_B_a} and \eqref{eq:aux:Y_sigma_T_st}, while $\Prob(Y^\sigma_{T_\mathsf{st}^\sigma(\kappa)} \in B_{\rho/4}(a), A)$ is bounded by 1. For the next step consider:
\begin{equation*}
\begin{aligned}
    \Prob &(Y^\sigma_{T_\mathsf{st}^\sigma(\kappa) + 2\overline{T}} \notin B_{\rho/4}(a), A) \\
    &\leq  \sup_{y \in  B_{\rho/4}(a)}\TrProb_y(Y^\sigma_{\overline{T}} \notin B_{\rho/4}(a), A) \Prob(Y^\sigma_{T_\mathsf{st}^\sigma(\kappa) + \overline{T}} \in B_{\rho/4}(a), A) \\
    & \quad +   \sup_{y \in \cDc_r^\mathsf{e} \setminus B_{\rho/4}(a)}\TrProb_y(Y^\sigma_{\overline{T}} \notin B_{\rho/4}(a), A) \Prob(Y^\sigma_{T_\mathsf{st}^\sigma(\kappa) + \overline{T}} \notin B_{\rho/4}(a), A)\\
    & \leq \overline{o}_\kappa(\sigma) \left[1 + \overline{o}_\kappa(\sigma) + \overline{o}_\kappa^2(\sigma) \right],
\end{aligned}
\end{equation*}
similarly to the previous computations. For any fixed $\kappa > 0$ and $\sigma > 0$ small enough, we can repeat this procedure $N(\sigma) := \left\lfloor \frac{1}{\overline{T}}  \exp{\frac{2(H + \eta)}{\sigma^2}} \right\rfloor$ times, thus while $A$ still holds. We finally get the following upper bound:
\begin{equation}\label{eq:aux:Y_T_st_n_T_1}
     \sup_{n \leq N(\sigma)} \Prob (Y^\sigma_{T_\mathsf{st}^\sigma(\kappa) + n \overline{T}} \notin B_{\rho/4}(a), A) \leq \overline{o}_\kappa(\sigma) \sum_{i = 0}^{N(\sigma)} \overline{o}^i_\kappa(\sigma) \leq \frac{\overline{o}_\kappa(\sigma)}{1 - \overline{o}_\kappa(\sigma)}. 
\end{equation}
This allows us to confine with high probability $Y^\sigma$ for points of time of the form $T_\mathsf{st}^\sigma(\kappa) + n \overline{T}$ inside the ball $B_{\rho/4}(a)$. 

The last steps that one has to make in order to prove the lemma is, first, to control the probability $\Prob(Y^\sigma_t \notin B_{\rho/2}(a))$ in between points of time of the form $T_\mathsf{st}^\sigma(\kappa) + k \overline{T}$ and $T_\mathsf{st}^\sigma(\kappa) + (k + 1) \overline{T}$ and, second, remove event $A$. Note that
\begin{equation*}
    \sup_t \Prob (Y_t^\sigma \notin B_{\rho/2}(a)) \leq \Prob(\overline{A}) + \sup_t \Prob (Y_t^\sigma \notin B_{\rho/2}(a), A),
\end{equation*}
where the suprema are taken with respect to $t \in \left[T_{\mathsf{st}}^\sigma(\kappa); \exp{\frac{2(H + \eta)}{\sigma^2}} \right]$. The first probability tends to zero by Lemma~\ref{lem:stab_in_finite_time} and Proposition~\ref{prop:DZ_exit_time}, since
\begin{equation*}
    \Prob(\overline{A}) \leq \Prob(|X_{T_{\mathsf{st}}^\sigma(\kappa)}^\sigma - a| > \kappa) + \sup_{y \in B_{\kappa}(a)} \TrProb_y \left(\tau^{Y, \sigma}_{\cDc_r^\mathsf{e}} < \exp{\frac{2(H_r^\mathsf{e} - \delta)}{\sigma^2}} \right) \xrightarrow[\sigma \to 0]{} 0.
\end{equation*}

For the second probability, consider the following inequalities for any $\kappa > 0$ and for any $\sigma > 0$ small enough. Using the Markov property of $Y^\sigma$, we have
\begin{equation*}
\begin{aligned}
    \sup_t \Prob (Y_t^\sigma \notin B_{\rho/2}(a), A) & \leq \sup_{n \leq N(\sigma)} \Prob(Y^\sigma_{T_{\mathsf{st}}^\sigma(\kappa) + n \overline{T}} \in B_{\rho/4}(a), A) \\
    &\qquad \times \sup_{y \in B_{\rho/4}(a)}\sup_{t\leq \overline{T}} \TrProb_y (Y_t^\sigma \notin B_{\rho/2}(a), A)\\
    & \quad + \sup_{n \leq N(\sigma)}  \Prob(Y^\sigma_{T_{\mathsf{st}}^\sigma(\kappa) + n \overline{T}} \notin B_{\rho/4}(a), A) \\
    &\qquad \times \sup_{y \in \cDc_r^\mathsf{e}} \sup_{t\leq \overline{T}} \TrProb_y (Y_t^\sigma \notin B_{\rho/2}(a), A).
    \end{aligned}
\end{equation*}

Let us use \eqref{eq:aux:Y_T_st_n_T_1} and bound by $1$ the probabilities that are not needed for our derivations. Finally, we get for any $\kappa > 0$ and $\sigma > 0$ small enough:
\begin{equation*}
    \sup_t \Prob (Y_t^\sigma \notin B_{\rho/2}(a), A) \leq \sup_{y \in B_{\rho/4}(a)}\sup_{t\leq \overline{T}} \TrProb_y (Y_t^\sigma \notin B_{\rho/2}(a)) + \frac{\overline{o}_\kappa(\sigma)}{1 - \overline{o}_\kappa(\sigma)}.   
\end{equation*}

Note that $\{Y_t^\sigma \notin B_{\rho/2}(a)\} \subseteq \{\tau^{Y, \sigma}_{B_{\rho/2}(a)} < t\}$. Therefore, we have
\begin{equation*}
    \sup_{y \in B_{\rho/4}(a)}\sup_{t\leq \overline{T}} \TrProb_y (Y_t^\sigma \notin B_{\rho/2}(a)) \leq \sup_{y \in B_{\rho/4}(a)} \TrProb_y(\tau^{Y, \sigma}_{B_{\rho/2}(a)} < \overline{T}) \xrightarrow[\sigma \to 0]{} 0,
\end{equation*}
by Proposition~\ref{prop:DZ_exit_time}. This finally shows that we can find $\eta > 0$ such that for any $\kappa > 0$ small enough, we have
\begin{equation*}
    \sup_{t \in \left[T_{\mathsf{st}}^\sigma(\kappa); \exp{\frac{2(H + \eta)}{\sigma^2}} \right]} \Prob (Y_t^\sigma \notin B_{\rho/2}(a)) \xrightarrow[\sigma \to 0]{} 0,
\end{equation*}
which proves the lemma.

\subsection{Control of the law: Proof of Lemma~\ref{lem:S_k_control}}
\label{hossein2}

In this paragraph, we prove Lemma~\ref{lem:S_k_control}. In order to do that, we first provide and prove Lemma \ref{lem:trick} below, that is a modification of a technique introduced by J.~Tugaut in~\cite{JOTP2}. Let $\xi(t) := \Wass_2^2(\mu_t^\sigma; \delta_a)$. Consider the following lemma:

\begin{lem}\label{lem:trick}
    Under Assumptions~\ref{assu:pot:V}--\ref{assu:stable}, there exist $K_1, K_2 >0$ such that for any $t>0$:
    \begin{equation*}
        \xi'(t)\le - K_1 \xi(t)+d\sigma^2 + K_2 \sqrt{\Prob(X_t^\sigma \notin B_\rho(a))}\,.
    \end{equation*}
\end{lem}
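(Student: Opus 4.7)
The plan is to start from Itô's formula applied to $|X_t^\sigma - a|^2$ and take expectations, obtaining
\begin{equation*}
\xi'(t) = -2\,\E\langle X_t^\sigma - a,\ \nabla V(X_t^\sigma) + \nabla F \ast \mu_t^\sigma(X_t^\sigma)\rangle + d\sigma^2.
\end{equation*}
The central algebraic move is to decompose the nonlinear drift as $\nabla V(x) + \nabla F \ast \mu_t^\sigma(x) = \nabla W_a(x) + g_t(x)$, where
\begin{equation*}
g_t(x) := \int_{\R^d} [\nabla F(x-z) - \nabla F(x-a)]\,\mu_t^\sigma(\dd z)
\end{equation*}
measures the gap between the true mean-field drift and the drift that would be produced by the Dirac mass $\delta_a$. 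Note that $\nabla W_a(a) = 0$, since $\nabla V(a) = 0$ by $(V-4)$ and $\nabla F(0) = 0$ by $(F-2)$.

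For the $\nabla W_a$ piece, I rewrite it as $-2\,\E\langle X_t^\sigma - a, \nabla W_a(X_t^\sigma) - \nabla W_a(a)\rangle$ and split by the indicator of $B_\rho(a)$. Inside $B_\rho(a)$, the local strong convexity of Definition~\ref{def:Loc_convexity} gives $\langle X_t^\sigma - a, \nabla W_a(X_t^\sigma) - \nabla W_a(a)\rangle \ge C_W|X_t^\sigma - a|^2$. Outside, Cauchy--Schwarz together with the polynomial growth bounds $(V-3)$, $(F-3)$ for $\nabla W_a$ and the uniform-in-$(t,\sigma)$ moment estimate of Proposition~\ref{prop:existence} controls the contribution by a constant multiple of $\sqrt{\Prob(X_t^\sigma \notin B_\rho(a))}$. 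The same trick lets me replace $\E[|X_t^\sigma - a|^2 \1_{\{X_t^\sigma \in B_\rho(a)\}}]$ by $\xi(t)$ up to an error of identical form.

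For the correction term $g_t$, I use $(F-4)$ to bound $|\nabla F(x-z) - \nabla F(x-a)| \le C'|z-a|(1 + |x-z|^{2r-1} + |x-a|^{2r-1})$ and then apply Cauchy--Schwarz under the $\mu_t^\sigma$-integral so as to extract the factor $\sqrt{\int|z-a|^2 \mu_t^\sigma(\dd z)} = \sqrt{\xi(t)}$; the residual polynomial factor has bounded expectation under $\mu_t^\sigma \otimes \mu_t^\sigma$ via Proposition~\ref{prop:existence}. A second Cauchy--Schwarz in the outer expectation over $X_t^\sigma$ then yields $|\E\langle X_t^\sigma - a, g_t(X_t^\sigma)\rangle| \le C_2\,\xi(t)$, where $C_2$ depends only on $C'$, $r$, and the moments furnished by Proposition~\ref{prop:existence}.

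Combining everything gives
\begin{equation*}
\xi'(t) \le -2(C_W - C_2)\,\xi(t) + d\sigma^2 + K_2\,\sqrt{\Prob(X_t^\sigma \notin B_\rho(a))},
\end{equation*}
which is the statement with $K_1 := 2(C_W - C_2)$. The main obstacle is precisely the positivity of $K_1$: the interaction correction is linear in $\xi(t)$ and therefore competes with, rather than being absorbed by, the dissipation $-2C_W\xi(t)$. To secure $K_1 > 0$, I plan to shrink $\rho$ so that $C_W$ approaches $\lambda_{\min}(\nabla^2 W_a(a))$, and, if needed, sharpen the bound on $g_t$ for $x$ close to $a$ by exploiting $\nabla F(0) = 0$ to reduce $C_2$.
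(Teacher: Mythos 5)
Your opening steps coincide with the paper's: It\^o plus expectation gives $\xi'(t)=d\sigma^2-2\,\E\langle X_t^\sigma-a,\ \nabla V(X_t^\sigma)+\nabla F\ast\mu_t^\sigma(X_t^\sigma)\rangle$, and the outside-$B_\rho(a)$ tails are absorbed into the $\sqrt{\Prob(X_t^\sigma\notin B_\rho(a))}$ term by Cauchy--Schwarz, polynomial growth, and Proposition~\ref{prop:existence}. But the issue you flag at the end is fatal to your decomposition $\nabla V+\nabla F\ast\mu_t^\sigma=\nabla W_a+g_t$. The constant $C_2$ in your bound $|\E\langle X_t^\sigma-a,g_t(X_t^\sigma)\rangle|\le C_2\,\xi(t)$ is proportional to the local Lipschitz constant $C'$ of $\nabla F$ from $(F-4)$, hence of the order of $\|\nabla^2 F(0)\|$ near $a$, whereas $C_W$ can never exceed $\lambda_{\min}(\nabla^2 W_a(a))$. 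Nothing in the assumptions forces $C_W>C_2$: take $\nabla^2 V(a)=10\,{\rm Id}$ and $\nabla^2 F(0)=-9\,{\rm Id}$ (compatible with $(F-5)$ once $\theta_1>\theta_2=9$); then $\nabla^2 W_a(a)={\rm Id}$ so $C_W\le 1$, while $C_2$ is of order $9$, and $K_1=2(C_W-C_2)<0$. Your proposed repairs do not close this: shrinking $\rho$ pushes $C_W$ up to at most $\lambda_{\min}(\nabla^2 W_a(a))$ but has no effect on $C_2$, which is a property of $\nabla^2 F$ near $0$ and not of the radius; and exploiting $\nabla F(0)=0$ only confirms $g_t(x)\approx\nabla^2 F(x-a)\,(a-\E X_t^\sigma)$, so the effective constant is still governed by $\|\nabla^2 F(0)\|$.

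The paper's proof circumvents the competition by engineering an exact algebraic cancellation rather than a smallness estimate. In the nontrivial case $\nabla^2F(0)\prec\frac{C_W}{2}{\rm Id}$ one sets $\cMc:=-\nabla^2 F(0)+\frac{C_W}{2}{\rm Id}\succ 0$ and $\tilde F:=F+\frac12\langle\,\cdot\,;\cMc\,\cdot\,\rangle$, so that $\tilde F$ is convex on $B_\rho(0)$ and $\nabla F\ast\mu_t^\sigma=\nabla\tilde F\ast\mu_t^\sigma-\cMc(\,\cdot\,-\E X_t^\sigma)$. The $\nabla\tilde F$ contribution is handled by symmetrization with an independent copy $Y_t^\sigma$: on $\{X_t^\sigma,Y_t^\sigma\in B_{\rho/2}(a)\}$, oddness and convexity of $\tilde F$ on $B_\rho(0)$ give $\E\langle X_t^\sigma-a,\nabla\tilde F(X_t^\sigma-Y_t^\sigma)\rangle=\frac12\E\langle X_t^\sigma-Y_t^\sigma,\nabla\tilde F(X_t^\sigma-Y_t^\sigma)\rangle\ge 0$, the complementary event going into the $\sqrt{\Prob}$ error, and the term $\langle\E X_t^\sigma-a,\cMc(\E X_t^\sigma-a)\rangle\ge0$ is also discarded; what remains from the interaction is $-\E\langle X_t^\sigma-a,\cMc(X_t^\sigma-a)\rangle$. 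On the confinement side, local convexity of $W_a$ and the definition of $\cMc$ give $\nabla^2 V\succeq\frac{C_W}{4}{\rm Id}+\cMc$ on $B_\rho(a)$ (after a further harmless shrink of $\rho$). Summing the two estimates, the $\pm\cMc$ terms cancel \emph{exactly}, leaving $\xi'(t)\le d\sigma^2-\frac{C_W}{2}\xi(t)+K_2\sqrt{\Prob(X_t^\sigma\notin B_{\rho/2}(a))}$ with $K_1=\frac{C_W}{2}>0$ unconditionally. This transfer of the unfavorable part of $\nabla^2F(0)$ from the interaction estimate to the confinement estimate is the ingredient your $\nabla W_a+g_t$ split cannot reproduce.
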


\begin{proof}
The proof is similar to the one of \cite[Lemma 4.1]{JOTP2} although it is strongly different.

\noindent\noindent{}{\bf Step 1.} First of all, by It\^o's formula, we have
\begin{align*}
    | X_t^\sigma - a|^2 &= | X_0 - a|^2 + 2\sigma\int_0^t \langle X_s^\sigma - a; \dd{B_s} \rangle - 2\int_0^t \langle X_s^\sigma - a; \nabla V(X_s^\sigma)\rangle \dd{s}\\
    & - 2\int_0^t \langle X_s^\sigma - a; \nabla F\ast\mu_s^\sigma (X_s^\sigma)\rangle \dd{s} + d \sigma^2 t.
\end{align*}

For the next step, we take the expectation and derivative with respect to $t$. We get:

\begin{equation*}
    \xi^\prime(t) = d\sigma^2 - 2\E[\langle X_t^\sigma - a; \nabla V(X_t^\sigma) + \nabla F \ast \mu_t^\sigma(X_t^\sigma)\rangle]\,.
\end{equation*}

\noindent\noindent{}{\bf Step 2.} 
Let us introduce $\tilde{F} \in \mathcal{C}^2(\R^d)$ -- a modification of the function $F$ such that $\tilde{F}$ is "convex enough" around $0$. Namely, if $\nabla^2 F(0) \succeq \frac{C_W}{2}{\rm Id}$, where $C_W$ is the positive constant from Definition~\ref{def:Loc_convexity}, then we simply let $\tilde{F} = F$. If not, we introduce a matrix $\cMc := -\nabla^2 F(0) + \frac{C_W}{2}{\rm Id}$ and define $\tilde{F}(x):= F(x) + \frac{1}{2}\left\langle x;\cMc x \right\rangle$.

In the following, without loss of generality, we consider the case $\nabla^2 F(0) \prec \frac{C_W}{2}{\rm Id}$. Moreover, without loss of generality, we assume that $\tilde{F}$ is locally convex inside the ball $B_\rho(0)$, where $\rho$ is the radius of convexity of the effective potential introduced in Definition~\ref{def:Loc_convexity}. Indeed, since $\nabla^2 F$ is continuous, we can always choose $\rho$ in Definition~\ref{def:Loc_convexity} to be small enough such that $\nabla^2 F(x) - \nabla^2 F(0) \succ -\frac{C_W}{2} {\rm Id}$ for any $x \in B_\rho(0)$. Note, that, under these assumptions, $\cMc$ is a positive definite matrix.

\noindent\noindent{}{\bf Step 3.} By definition of $\tilde{F}$, we have:
\begin{align*}
    \E[\langle X^\sigma_t - a&; \nabla F \ast \mu_t^\sigma(X^\sigma_t)\rangle]\\
    & = \E[\langle X^\sigma_t - a; \nabla \tilde{F} \ast \mu_t^\sigma (X^\sigma_t)\rangle] - \E[\langle X^\sigma_t - a; \cMc(X^\sigma_t - \E[X^\sigma_t]) \rangle]\\
    & = \E[\langle X^\sigma_t - a; \nabla \tilde{F} \ast \mu_t^\sigma (X^\sigma_t) \rangle] - \E[\langle X^\sigma_t - a; \cMc(X^\sigma_t - a)\rangle]\\
    & \quad - \E[\langle X^\sigma_t - a; \cMc(a - \E[X^\sigma_t]) \rangle].
\end{align*}

Let $Y_t^\sigma$ be an independent copy of $X_t^\sigma$. Since $\cMc$ is positive definite, this gives us the following lower bound:
\begin{equation}\label{eq:aux:proof_of_trick_1}
\begin{aligned}
    \E[\langle X^\sigma_t - a; \nabla F \ast \mu_t^\sigma(X^\sigma_t)\rangle] &\geq \E\left[\langle X^\sigma_t - a; \nabla \tilde{F}(X^\sigma_t - Y^\sigma_t) \rangle\right] \\
    & \quad - \E[\langle X^\sigma_t - a; \cMc(X^\sigma_t - a)\rangle].
\end{aligned}
\end{equation}

\noindent\noindent{}{\bf Step 4.} We now focus on the first term of the inequality above and. Let us consider separately the parts of the process $X^\sigma$ lying outside and inside the ball $B_{\rho/2}(a)$. Using the polynomial growth (Assumption~\ref{assu:pot:F}), for some generic constant $\text{C}$, we get:

\begin{align*}
    \E\Big[\langle X^\sigma_t-a ;\nabla \tilde{F}(X^\sigma_t-Y^\sigma_t)\rangle\Big] &\geq \E\left[\langle X^\sigma_t-a;\nabla \tilde{F}(X^\sigma_t-Y^\sigma_t)\rangle\mathds{1}_{X^\sigma_t\in B_{\rho/2} (a)} \mathds{1}_{Y^\sigma_t \in B_{\rho/2}(a)} \right] \\
    &\quad - \text{C} \; \E\left[(1 + |X^\sigma_t|^{2r})\mathds{1}_{X^\sigma_t \notin B_{\rho/2}(a)} \right]\,.
\end{align*}

Since $\tilde{F}$ is convex inside $B_\rho(0)$ and the moments are uniformly bounded (Proposition~\ref{prop:existence}), by using the Cauchy–Schwarz inequality, for any $0 < \sigma < 1$, we immediately obtain the existence of a positive constant $K > 0$ such that:
\begin{equation*}
    \E\Big[\langle X^\sigma_t-a ;\nabla \tilde{F}(X^\sigma_t-Y^\sigma_t)\rangle\Big] \geq - K\sqrt{\Prob\left(X^\sigma_t \notin B_{\rho/2}(a)\right)}.
\end{equation*}

We plug this inequality in Equation~\ref{eq:aux:proof_of_trick_1} and get:
\begin{equation}\label{eq:aux:proof_of_trick_2}
\begin{aligned}
    \E[\langle X^\sigma_t - a; \nabla F\ast\mu_t^\sigma (X^\sigma_t)\rangle] &\geq - \E[\langle X^\sigma_t-a;\cMc(X^\sigma_t-a)\rangle]\\
    & \quad - K\sqrt{\Prob\left(X^\sigma_t \notin B_{\rho/2}(a)\right)}.
\end{aligned}
\end{equation}

\noindent\noindent{}{\bf Step 5.} We now focus on the term involving $\nabla V$. According to Definition~\ref{def:Loc_convexity}, for any $x \in B_\rho(a)$, we have:
\begin{equation*}
    \nabla^2 V(x) \succeq C_W {\rm Id} - \nabla^2 F(x - a).
\end{equation*}
At the same time, by the definition of $\mathcal{M}$:
\begin{equation*}
    \nabla^2 F(0) = - \mathcal{M} + \frac{C_W}{2}{\rm Id}.
\end{equation*}
Since $\nabla^2 F$ is continuous, we can, without loss of generality, decrease $\rho$ if necessary so that $- \nabla^2 F(x) \succeq - \nabla^2 F(0) -\frac{C_W}{4}{\rm Id}$ for any $x \in B_\rho(0)$. Therefore, for any $x \in B_\rho(a)$, we have:
\begin{equation*}
    \nabla^2 V(x) \succeq \frac{C_W}{4} {\rm Id} + \mathcal{M}.
\end{equation*}
Using the same logic as in {\bf Step 4}, we get:
\begin{align}
    \E[ \langle &X^\sigma_t-a; \nabla V(X^\sigma_t)\rangle] \nonumber\\
    & = \E[ \langle X^\sigma_t-a; \nabla V(X^\sigma_t)\rangle\mathds{1}_{X^\sigma_t \in B_{\rho/2}(a)}]+\E[ \langle X^\sigma_t - a; \nabla V(X^\sigma_t) \rangle\mathds{1}_{X^\sigma_t \notin B_{\rho/2}(a)}] \nonumber\\
    & \geq \E\left[\langle X^\sigma_t - a; \left(\frac{C_W}{4}{\rm Id} + \cMc \right)(X^\sigma_t - a)\rangle\right] - K \sqrt{\Prob\left(X^\sigma_t \notin B_{\rho/2}(a) \right)}\,. \label{eq:aux:proof_of_trick_3}
\end{align}

\medskip

\noindent{\bf Final step.} As a consequence, putting \ref{eq:aux:proof_of_trick_2} and \ref{eq:aux:proof_of_trick_3} in the {\bf Step 1}, we get:
\begin{equation*}
    \xi^\prime (t) \leq d\sigma^2 - \frac{C_W}{2} \xi(t) + 2K\sqrt{\Prob\left( X^\sigma_t \notin B_{\rho/2}(a)\right)}\,,
    \end{equation*}

which concludes the proof.

\end{proof}

Now we are ready to prove Lemma~\ref{lem:S_k_control} itself.

\begin{proof}[proof of Lemma~\ref{lem:S_k_control}]
    
In order to prove the lemma, we first use Lemma~\ref{lem:trick}, that is inequality:
    \begin{equation*}
        \xi'(t)\le -2\rho'\, \xi(t)+d\sigma^2+K\sqrt{\Prob(X_t^\sigma \notin B_\rho(a))}\,.
    \end{equation*}
After that, we use Lemma \ref{lem:Y_t_notin_B_rho} along with Proposition \ref{prop:|X_t-Y_t|} in order to show that the term $\Prob(X_t^\sigma \notin B_\rho(a))$ tends to $0$ with $\sigma \to 0$ for any $T_{\mathsf{st}}^\sigma(\kappa) \leq t \leq S_{\mathsf{st}}^\sigma(\kappa) \wedge \exp{\frac{2(H + \eta)}{\sigma^2}}$, which, in its term, means that we can choose $\sigma$ to be small enough such that $\xi(t) \leq \kappa^2$ for all such $t$. Final step is to show that $S_{\mathsf{st}}^\sigma(\kappa)$ can not be less or equal than $\exp{\frac{2(H + \eta)}{\sigma^2}}$ or else we get contradiction between the fact that $\xi(S_{\mathsf{st}}^\sigma(\kappa)) \leq \kappa^2$ and definition of $S_{\mathsf{st}}^\sigma(\kappa)$.

Consider the following inequalities. For any $T_{\mathsf{st}}^\sigma(\kappa) \leq t \leq S_{\mathsf{st}}^\sigma(\kappa) \wedge \exp{\frac{2(H + \eta)}{\sigma^2}}$:
\begin{equation*}
        \Prob(X_t^\sigma \notin B_{\rho}(a)) \leq \Prob(Y^\sigma_t \notin B_{\rho/2}(a)) + \Prob(|Y^\sigma_t - X^\sigma_t| > \rho/2) = o_\sigma(1),
\end{equation*}
by Lemma \ref{lem:Y_t_notin_B_rho} and Proposition \ref{prop:|X_t-Y_t|}. Thus, by Lemma~\ref{lem:trick}, $\xi(t) = \Wass^2_2(\mu_t; \delta_a)$ is bounded for any $t$ considered above in the following way:
\begin{equation*}
    \xi^\prime(t)\le -2\rho^\prime \xi(t)+d\sigma^2 + K o_\sigma(1).
\end{equation*}
Therefore, we can decrease $\kappa$ and then $\sigma$ to be small enough such that $\xi(t) \leq \kappa^2$ for any $T_{\mathsf{st}}^\sigma(\kappa) \leq t \leq S_{\mathsf{st}}^\sigma(\kappa) \wedge \exp{\frac{2(H + \eta)}{\sigma^2}}$. 

The last step is to note that if $S_{\mathsf{st}}^\sigma(\kappa) < \exp{\frac{2(H + \eta)}{\sigma^2}}$, then we get a contradiction between the definition of $S_{\mathsf{st}}^\sigma(\kappa)$ and the fact that $\xi(S_{\mathsf{st}}^\sigma(\kappa)) \leq \kappa^2$, which proves the lemma.

\end{proof}

\begin{small}
\bibliographystyle{plain}
\bibliography{biblioKramer.bib}
\end{small}

\end{document}